\newcommand{\x}{\mathbf{x}}
\newcommand{\R}{\mathbb{R}}
\newcommand{\bfpsi}{\bm{\psi}}
\newcommand{\tr}[1]{\textcolor{blue}{#1}}
\newcommand\figcaption{\def\@captype{figure}\caption}
\newcommand\tabcaption{\def\@captype{table}\caption}
\theoremstyle{plain}
\newtheorem{Th}{Theorem}[section]
\newtheorem{Prop}[Th]{Proposition}
\theoremstyle{definition}
\newtheorem{Rem}[Th]{Remark}
\newtheorem{?}[Th]{Problem}
\newcommand{\bx}{\mathbf{x}}
\newcommand{\bb}{\mathbf{b}}
\newcommand{\bp}{\mathbf{p}}
\newcommand{\bq}{\mathbf{q}}
\newcommand{\by}{\mathbf{y}}
\DeclareMathOperator*{\argmin}{arg\,min}
\begin{document}
	\title{Numerical Identification of Nonlocal Potential in  
		Aggregation}% equation from Single Noisy Observations}
	%Rules of Interaction from Noisy Observations: Investigate the Collective Behaviors by Potential in Aggregation Equation}
	\author{Yuchen He\thanks{Institute of Natural Sciences, Shanghai Jiao Tong University, Shanghai, China. Email: yuchenroy@sjtu.edu.cn}
		, Sung Ha Kang\thanks{School of Mathematics, Georgia Institute of Technology, Atlanta, Georgia, GA 30332-0160, USA. Email: kang@math.gatech.edu}
		, Wenjing Liao\thanks{School of Mathematics, Georgia Institute of Technology, Atlanta, Georgia, GA 30332-0160, USA. Email: wliao60@gatech.edu}
		, Hao Liu\thanks{Hong Kong Baptist University, Hong Kong SAR. Email: haoliu@hkbu.edu.hk} 
		, Yingjie Liu\thanks{School of Mathematics, Georgia Institute of Technology, Atlanta, Georgia, GA 30332-0160, USA. Email: yingjie@math.gatech.edu}}
	\date{}
	\maketitle
	\begin{abstract}
		Aggregation equations are broadly used to model population dynamics with nonlocal interactions, characterized by a potential in the equation. This paper considers the inverse problem of identifying the potential from a single noisy spatial-temporal process. The identification is challenging in the presence of noise due to the instability of numerical differentiation.
		We propose a robust model-based technique to identify the potential by minimizing a regularized data fidelity term, and regularization is taken as the total variation and the squared Laplacian.  A split Bregman method is used to solve the regularized optimization problem. Our method is robust to noise by utilizing a Successively Denoised Differentiation technique. We consider additional constraints such as compact support and symmetry constraints to enhance the performance further.   We also apply this method to identify time-varying potentials and identify the interaction kernel in an agent-based system.   Various numerical examples in one and two dimensions are included to verify the effectiveness and robustness of the proposed method.
	\end{abstract}

\section{Introduction}

Nonlocal Partial Differential Equations (PDE) are often used to model dynamics with nonlocal interactions. They have wide applications in neuronal networks \cite{bressloff2001traveling}, biological aggregation \cite{parrish1997animal} and material science \cite{bates2002spectral}.  In neuronal networks, nonlocal PDEs are used to describe the dynamics of excitatory neurons' local activities in the cortex, where the nonlocal term models the connection strength between neurons \cite{bressloff2001traveling}. In biological aggregation, the population density of fish schools can be modeled by a nonlocal PDE \cite{parrish1997animal}, where the nonlocal term describes the long-range attraction and short-range repulsion.

In this paper, we consider the aggregation equation
\begin{align}
	u_t+\nabla\cdot\left(u \, \mathbf{p}\right)=0, \text{ with  } \mathbf{p}=-\nabla (\phi* u)
	\label{eq.aggregation}
\end{align}
where $\phi$ is a potential (also known as the kernel), $\phi* u$ denotes the convolution of $\phi$ and $u$.  This equation has broad applications in physics and biology. In granular materials, (\ref{eq.aggregation}) is used to characterize the dynamics of kinetic models \cite{caglioti2002homogeneous}. In biology, the evolution of swarming can be described by (\ref{eq.aggregation})  in which the potential $\phi$ represents the long-range attraction, and short-range repulsion between individuals \cite{topaz2004swarming}. In particular, the authors in \cite{morale2005interacting}  show that starting from an Eulerian description of an attraction-repulsion dynamical system, % (a system of differential equations modeling the evolution of a large number of individuals),
as the number of individuals goes to infinity, the dynamical system converges to (\ref{eq.aggregation}) which describes the evolution of the mean-field spatial density of the population. In bacterial chemotaxis, the convolution $\phi* u$ represents the concentration of chemoattractant which is emitted by bacteria and used to interact with other individuals \cite{keller1970initiation}. A popular model in the kinetic aspect for this dynamics is the Othmer--Dunbar--Alt system whose hydrodynamic limit is (\ref{eq.aggregation}) \cite{dolak2005kinetic}. Other applications can be found in particle assembly \cite{holm2006formation}, opinion dynamics \cite{motsch2014heterophilious} and pattern formation\cite{balague2013dimensionality}.

Although (\ref{eq.aggregation}) has been successfully applied to model dynamics in different fields, its solution may blow up in the evolution process. It has been shown that, even with a smooth initial condition, when the potential has a Lipschitz point at the origin, a weak solution of (\ref{eq.aggregation}) may always concentrate and become a Dirac function in a finite time, which is known as the finite-time blow-up solution \cite{bertozzi2009blow}.  Here, the potential having a Lipschitz point means that the  potential is Lipschitz but has a singular point.
This finite-time blow-up behavior of solutions brings difficulties in solving (\ref{eq.aggregation}) numerically, especially near the blow-up time.  In \cite{huang2010self}, the authors use a characteristic method to solve an equivalent coupled ODE system with potential $\phi=|x|$ in various dimensions. The particle method is studied in \cite{carrillo2011global} which enables one to track the behavior of solutions after the blow-up time.
%	The particle method has been used for high-dimensional problems in \cite{craig2016blob} to approximate  smooth solutions before the blow-up time. An upwind finite volume scheme is studied in \cite{delarue2019convergence}, in which (\ref{eq.aggregation}) is taken as a conservative transport equation with a nonlinear velocity field. The numerical solution is shown to converge %with order $1/2$
%	in Wasserstein distance.
%	In \cite{james2015numerical}, the authors propose a numerical method to solve the one-dimensional aggregation equation in the attractive case. %The authors carefully constructed the discretized macroscopic velocity $\bp$ and proved that the numerical solution converges to a duality solution.
%	A second-order accurate method based on the idea of TVD limiters is introduced in \cite{carrillo2020second}. This method can capture the solution behavior after the first blow-up time.
%	%

In literature, most existing works focus on the mathematical theories on the existence and regularity of the solution, or the numerical solvers of (\ref{eq.aggregation}) with a given potential. The inverse problem of identifying the potential from a given solution has not been widely studied in comparison with the forward problem.  The identification of the potential from the steady-state solution is considered in \cite{fetecau2011swarm}, where finding the underlying potential amounts to solving a time-independent nonlocal PDE. In \cite{you2020data,you2020data1}, the authors consider learning the potential in a non-local linear PDE from high-fidelity data. The potential is represented as a linear combination of Bernstein polynomials, and the polynomial coefficients are recovered from an optimization problem solved by the Adam optimizer and L-BFGS.
In \cite{bongini2017inferring,lu2019nonparametric}, a variational method is introduced to estimate the kernel from the trajectory data in a dynamical system of agents, and a statistical theoretical guarantee
is established in \cite{lu2021learning}.
The inverse problem of parameter estimation in aggregation-diffusion equations is considered in \cite{huang2019learning}, where the diffusion parameter estimation %of interacting particle systems
is studied subject to the Newtonian aggregation and Brownian diffusion. In \cite{huang2019learning}, the potential is known, and only the diffusion parameter is to be estimated.  %All of the aforementioned method requires exact data.

%In each stage, one solves an optimization problem using stochastic gradient decent.

In this paper, we study the inverse problem of potential identification in aggregation equations. Given a noisy data set governed by an aggregation equation, we aim to numerically identify the underlying potential $\phi$. In comparison with the aforementioned works \cite{fetecau2011swarm,you2020data,you2020data1,bongini2017inferring,lu2019nonparametric,huang2019learning}, we utilize a small amount of noisy data from a single realization of the PDE.  This work is motivated by a series of works, such as \cite{brunton2016discovering,schaeffer2017learning,rudy2017data,he2020robust, kang2019ident} where the objective is to identify a parametric PDE (or dynamical system) from a single set of time-dependent noisy data.   The PDE identification with noisy data is particularly challenging due to the instability of numerical differentiation.
While the identification methods in \cite{he2020robust, kang2019ident} can handle a wide range of PDEs and a considerable amount of noise, the extension to non-local PDEs is not trivial.
A non-local PDE such as (\ref{eq.aggregation}) requires a different identification approach.
%To estimate the covolution kernel, The challenges come from the high-order derivatives and the presents of noise in the given data.
%This approach can identify any linear operators of this setting.  The challenges come from the high-order derivatives and the presents of noise in the given data.

We propose identifying the potential by minimizing a functional regularized by a total variation term and a Laplacian term. %Given a single set of time-dependent data with added noise, the potential is identified by minimizing the functional.
A split Bregman method is used to solve the optimization problem efficiently.
We utilize a Successively Denoised Differentiation technique \cite{he2020robust} to stabilize numerical differentiation so that the proposed method is robust to noise.  We consider additional constraints such as the compact support and symmetry constraints to enhance the performance further.  The proposed method can be extended to identifying time-varying potentials from agent-based data.
The agent-based data are simulated according to certain interaction rules \cite{reynolds1987flocks}, instead of solving the aggregation equation.
Our method can identify a potential with which the solution of (\ref{eq.aggregation}) approximates the evolution of the agent density.

This paper is organized as follows: We present our identification method in Section \ref{sec.model}. The numerical scheme and discretization details are given in Section \ref{sec_method}. Some techniques to improve the robustness are presented in Section \ref{sec.stability}. Our numerical experiments are shown in Section \ref{sec_num}. We discuss extensions to the estimation of time-varying potentials and potentials from agent-based data in Section \ref{sec_time} and Section \ref{sec.agentbased}, respectively.
We conclude this paper in Section \ref{sec.conclusion}.

\paragraph{Notation:} In this paper, we use regular lowercase letters to denote scalars and bold lowercase letter to denote vectors. Uppercase letters are used to denote operators and matrices. We use $B(\mathbf{a},r)$ to denote the Euclidean ball centered at $\mathbf{a}$ with radius $r$.

\section{The proposed method: identification of nonlocal potential}\label{sec.model}

In this section, we describe our method to identify the potential  from a single set of noisy data.
%The model is formularized as an optimization problem which is solved by the split Bregmann method.
%
%\subsection{Rules of Interaction from Noisy Observations (RINO)}\label{sec_model_rig}
We assume the continuous PDE solution as our measured data, and propose our method with the continuous data.  The discretization setting is discussed in Section \ref{sec.dis}--\ref{sec.discretize.Bregman}.

Let $u:[0,T]\times\mathbb{R}^d\to\mathbb{R}$ be a solution of (\ref{eq.aggregation}) in which $\phi$ is an unknown potential with a compact support  in $\mathbb{R}^d$. For any $t\in[0,T]$, assume $u(t,\bx)$ has a compact support in $\mathbb{R}^d$. Denote our spatial computational domain by $\Omega$ which contains the support of $\phi$ and $u(t,\cdot)$ for any $t\in[0,T]$. 
Given a set of noisy discretized data of $u$, we aim to identify the spatially dependent potential $\phi$.  We further consider  time and spatially dependent potentials in Section \ref{sec_time}.  
The equation (\ref{eq.aggregation}) is linear in $\phi$, which can be written as
\begin{align}
	u_t=\nabla\cdot\left(u(\nabla (\phi*u))\right)\equiv L_u\phi\;,\label{eq_operator}
\end{align}
where  the linear operator $L_u:\phi\mapsto\nabla\cdot\left(u(\nabla (\phi*u))\right)$ depends on the solution $u$.

To estimate $\phi$, we propose to minimize the following functional:
\begin{align}
	\frac{1}{2}\int_0^T \int_{\Omega} (u_t-L_u\phi)^2 d\bx dt+  \alpha \int_{\Omega} |\nabla\phi| d\bx + \frac{\beta}{2} \int_{\Omega} |\nabla^2\phi|^2 d\bx,
	\label{eq.min.reg}
\end{align}
where $d\bx=dx_1\cdots dx_d$, $\nabla^2 \phi$ denotes the Laplacian of $\phi$, and $\alpha,\beta\geq 0$ are two weight parameters. Here $|\cdot|$ denotes the isotropic $L^1$ norm:
$$
|\nabla \phi|= \sqrt{\phi_{x_1}^2+\cdots +\phi_{x_d}^2} \mbox{ with } \phi_{x_j}=\partial \phi/\partial x_j.
$$
The minimizer of (\ref{eq.min.reg}) is the identified potential by our method. In (\ref{eq.min.reg}), the first term is a fidelity term representing the residual.
The second term gives a Total Variation (TV) regularization, which is popular in image processing \cite{rudin1992nonlinear,strong2003edge}. It is well known that this term helps to remove oscillations and keeps sharp changes in the gradient. In aggregation equations, many potential functions have singularities \cite{huang2010self}. The TV term helps to keep such features while the noise is suppressed. However, the TV term itself may produce undesired staircase effects \cite{maso2009higher,papafitsoros2014combined}. The third term in (\ref{eq.min.reg}) is the square of the Laplacian of $\phi$, which helps to ameliorate the staircase phenomenon. 
A model similar to (\ref{eq.min.reg}) is explored for image segmentation in \cite{cai2013two,li2020three}, where great performance has been demonstrated.
%In the current setting, we assume that the potential is smooth \textcolor{red}{We just mentioned singularity. Change smooth to ?}. %We do not use a $L_p$ norm with $0<p<1$, strongly sparsity promoting norms or adaptive norms. We focus on two regularizations: the TV term to impose flatness and the square of the Laplacian to encourage smoothness.
The effects of these two and other regularization terms are explored and compared extensively in Section \ref{sec_num}. 

The two regularization terms $|\nabla \phi|$ and $|\nabla^2\phi|^2$ in (\ref{eq.min.reg}) have their physical meanings: a \textit{bounded power} and a \textit{finite flux}, respectively. Since $\phi$ is the interaction potential, its gradient $\nabla \phi$ gives the associated force field whose value at $\bx$ specifies the force from the individual at $\bx$ to the origin.  The \textit{power}~\cite[Chapter 6]{halliday2013fundamentals} of moving an individual at $\bx$  towards $\by\neq \bx$ with speed $v>0$ is defined as $p_v(\bx,\by) := v\nabla \phi(\by-\bx)\cdot\frac{(\by-\bx)}{|\by-\bx|}$. For any bounded Borel set $\Gamma$, $\overline{p}_v(\bx) =  \frac{1}{|\Gamma|}\int_{\Gamma} p_v(\bx,\by)\,d\by$ measures the average power of moving an individual away from $\bx$ to any location within $\Gamma$, where  $|\Gamma|$ denotes the Lebesgue measure of $\Gamma$. The condition $\int_\Gamma |\nabla\phi(\by-\bx)|d\by<\infty$ thus implies that  $\overline{p}_v(\bx)$ is bounded for any finite moving speed $v$. Consider the total force received by the individual at $\bx$ from the neighbors on a sphere $\partial B(\bx,r)$ for some small $r>0$, i.e., $\int_{\partial B(\bx,r)}\nabla \phi(\by-\bx)\,ds(\by)$, where $ds$ denotes the differential of surface area. By the divergence theorem, $\int_{\partial B(\bx,r)}\nabla \phi(\by-\bx)\,ds(\by) = \int_{B(\bx,r)}\nabla^2\phi(\by-\bx)\,d\by$, which is bounded by $C|\nabla^2\phi|^2$ for some constant $C$. As a result, the second regularization term gives a finite flux.

The functional in \eqref{eq.min.reg} is well-defined for appropriate function spaces. We first introduce some related notations. Suppose $\Omega\subset\mathbb{R}^d$  is a bounded, open, and connected subset of $\mathbb{R}^d$ with Lipschitz boundary. Let $H^k(\Omega)$ be the Sobolev space of order $k$.  We use conventional notations $H_0^k(\Omega)$ for the $H^k$-closure  of smooth functions vanishing at $\partial \Omega$, and $\bigotimes^d H^k(\Omega)$ is the product space such that every $F \in \bigotimes^dH^k(\Omega)$ has the form $F=(f_1,\dots,f_d)$, $f_i\in H^k(\Omega)$ for $i=1,\dots,d$.  Denote $H^1(0,T; H^k(\Omega))$ for $k\geq 0$ as the space of functions $f:[0,T]\times \Omega\to\mathbb{R}$ that $f(\cdot,\bx)\in H^1([0,T])$ for any fixed $\bx\in\Omega$, and $f(t,\cdot)\in H^k(\Omega)$ for any fixed $t\in[0,T]$. For any $u\in  H^1(0,T; H^k(\Omega))$, we use $u_t\in L^2([0,T]\times\Omega)$ to denote its weak time derivative.  We set $H_0^1(\Omega)$ as the domain of the linear operator $L_u$ in~\eqref{eq_operator} where the spatial gradient $\nabla$ and divergence $\nabla\cdot$ are defined in the weak sense.

We take $\phi\in H_0^1(\Omega)$ and $u\in H^1([0,T]; H_0^k(\Omega))$. Assume $k>(d+2)/4$. For any $t\in[0,T]$,  we have $u*\nabla\phi(t,\cdot)\in \bigotimes^d H^k(\mathbb{R}^d)$ and $u(u*\nabla\phi)(t,\cdot)\in \bigotimes^d H^1(\mathbb{R}^d)$ \cite{behzadan2015multiplication}.  Therefore, the range of the operator $L_u$ is  contained in $ L^2([0,T]\times\mathbb{R}^d)$. The fidelity  term as well as the TV regularization term in~\eqref{eq.min.reg} is then well-defined. Furthermore, we assume that  $\nabla\phi \in \bigotimes^d H^1(\Omega\setminus \bar{B}(\mathbf{0},\varepsilon))$ for any closed ball $\bar{B}(\mathbf{0},\varepsilon)$ centered at the origin with radius $\varepsilon>0$.  As the second order weak derivatives of $\phi$ at $0$ may not exist, e.g., Morse potential~\cite{d2006self}, the second regularization term is understood as $\lim_{\varepsilon\to 0^+}\left(\int_{\Omega\setminus \bar{B}(\mathbf{0},\varepsilon)}(\nabla\cdot\nabla\phi)^2\,dx\right)$. The set
\begin{align}
	P = \left\{\phi \in H^1_0(\Omega)\;:\;\nabla\phi\in \bigcap_{\varepsilon>0}\left(\bigotimes^d H^1(\Omega\setminus \bar{B}_{\varepsilon})\right)\;,\;\;\lim_{\varepsilon\to 0^+}\left(\int_{\Omega\setminus \bar{B}(\mathbf{0},\varepsilon)}(\nabla^2\phi)^2\,dx\right)<\infty\right\}\label{eq_space}
\end{align}
forms a reflexive Banach space with the norm $\|\phi\|_P:= \|\phi\|_{H_0^1(\Omega)}+\lim_{\varepsilon\to 0^+}\|\nabla\cdot\nabla\phi\|_{L^2(\Omega\setminus\bar{B}(\mathbf{0},\varepsilon))}$ (Proposition~\ref{prop_Banach}). By the direct method  \cite[Section 3.2]{scherzer2009variational}, we conclude

\begin{Th}\label{th_ex}[Existence and Uniqueness] Let  $T>0$ and $\Omega\subset\mathbb{R}^d$ be an  open, bounded domain with Lipschitz boundary. Assume $k>(d+2)/4$.  For any $u\in H^1([0,T];H_0^k(\Omega))$,   the functional~\eqref{eq.min.reg} admits a unique minimizer in the reflexive Banach space defined in~\eqref{eq_space} in the supplementary material.
\end{Th}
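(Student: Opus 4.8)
The plan is to apply the direct method of the calculus of variations on the reflexive Banach space $P$ of~\eqref{eq_space}. Write $J(\phi)$ for the functional in~\eqref{eq.min.reg}. Since all three terms are nonnegative, $m:=\inf_{\phi\in P}J(\phi)\ge 0$ is finite, so I would fix a minimizing sequence $\{\phi_n\}\subset P$ with $J(\phi_n)\to m$. The four ingredients to verify are: (i) $J$ is coercive on $P$; (ii) $\{\phi_n\}$ is therefore bounded and admits a weakly convergent subsequence by reflexivity; (iii) $J$ is sequentially weakly lower semicontinuous, so that the weak limit attains the infimum; and (iv) $J$ is strictly convex, forcing the minimizer to be unique.

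For coercivity (which uses the Laplacian term, $\beta>0$), I would show that the term $\tfrac{\beta}{2}\int|\nabla^2\phi|^2$ alone controls the full $P$-norm. On each annular region $\Omega\setminus\bar{B}(\mathbf{0},\varepsilon)$, Green's identity together with the boundary condition $\phi\in H_0^1(\Omega)$ gives $\int|\nabla\phi|^2 = -\int \phi\,\nabla^2\phi + (\text{inner-sphere term})$; after passing to the limit $\varepsilon\to 0^+$ and using the Cauchy--Schwarz and Poincar\'e inequalities, one obtains $\|\nabla\phi\|_{L^2}\lesssim\|\nabla^2\phi\|_{L^2}$ and hence $\|\phi\|_{H_0^1}\lesssim\|\nabla^2\phi\|_{L^2}$. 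Consequently a bound on $J(\phi)$ yields a bound on $\|\phi\|_P$, so $\{\phi_n\}$ is bounded, and reflexivity produces $\phi_{n_k}\rightharpoonup\phi^\star\in P$.

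For weak lower semicontinuity I would argue term by term, using the standard fact that a convex functional that is strongly continuous is sequentially weakly lower semicontinuous. The fidelity term $\tfrac12\|u_t-L_u\phi\|_{L^2([0,T]\times\Omega)}^2$ is a continuous quadratic form once $L_u$ is shown to be a bounded linear map from $P$ into $L^2([0,T]\times\Omega)$; this boundedness rests on the convolution and Sobolev-multiplication estimates quoted before the theorem (with $k>(d+2)/4$, ensuring $u*\nabla\phi$ and $u(u*\nabla\phi)$ lie in the appropriate product Sobolev spaces). The TV term $\alpha\int|\nabla\phi|$ and the Laplacian term are convex and strongly continuous on $P$, hence also weakly lower semicontinuous. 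Combining, $J(\phi^\star)\le\liminf_k J(\phi_{n_k})=m$, so $\phi^\star$ is a minimizer.

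Uniqueness follows from strict convexity. The map $\phi\mapsto\int_\Omega|\nabla^2\phi|^2$ is a positive-definite quadratic form on $P$: if $\nabla^2\phi=0$ a.e. and $\phi\in H_0^1(\Omega)$, then $\phi$ is weakly harmonic with vanishing trace, so $\phi\equiv 0$. Thus for $\beta>0$ the Laplacian term is strictly convex, and adding the two remaining convex terms keeps $J$ strictly convex, which rules out a second minimizer. I expect the genuinely delicate step to be the rigorous treatment of the singularity at the origin: justifying the $\varepsilon\to 0^+$ limits in both the definition of the Laplacian term and the integration by parts in the coercivity estimate (showing the contribution of the inner sphere $\partial B(\mathbf{0},\varepsilon)$ vanishes), together with establishing the boundedness and the needed weak continuity of the nonlocal operator $L_u$ from the convolution--multiplication estimates.
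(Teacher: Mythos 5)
Your skeleton---direct method on $P$, reflexivity, weak lower semicontinuity, strict convexity---is exactly the paper's route: its appendix proof is a terse version of this, citing Scherzer et al.\ for the sequential weak lower semicontinuity, the Eberlein--\v{S}mulian theorem for extracting a weakly convergent subsequence, and asserting strict convexity of the whole energy for uniqueness. However, two of the quantitative claims you add to fill the gaps are genuinely false on $P$ when $d=1$, which is the paper's principal setting. The space $P$ was designed precisely to admit potentials whose gradient jumps at the origin (the Morse potential is the paper's motivating example): take the hat function $\phi(x)=a\left(1-|x|/L\right)\in H_0^1(-L,L)$. Then $\phi\in P$ and $\lim_{\varepsilon\to0^+}\int_{\Omega\setminus \bar{B}_\varepsilon}(\phi'')^2\,dx=0$, yet $\nabla\phi\not\equiv 0$. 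This single example defeats both (i) your coercivity estimate $\|\nabla\phi\|_{L^2}\lesssim\|\nabla^2\phi\|_{L^2}$ on $P$ and (ii) your positive-definiteness claim for the Laplacian quadratic form (``$\nabla^2\phi=0$ a.e.\ and $\phi\in H_0^1$ imply $\phi$ weakly harmonic, hence $\phi\equiv0$''). The failure occurs at the very step you flagged as delicate: the inner-sphere term in Green's identity does \emph{not} vanish as $\varepsilon\to0^+$. For the hat function, $\phi(-\varepsilon)\phi'(-\varepsilon)-\phi(\varepsilon)\phi'(\varepsilon)\to 2a^2/L=\int_{-L}^{L}|\phi'|^2\,dx$, i.e.\ the inner boundary term carries the entire Dirichlet energy. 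For $d\geq2$ your argument can be repaired, since a point has zero $H^1$-capacity, so the distributional Laplacian of $\phi\in H^1$ can have no singular part at the origin and a cutoff argument kills the inner boundary term; but in $d=1$ the limiting Laplacian term in \eqref{eq_space} deliberately ignores the delta at the kink, and your estimates do not survive.

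The consequence is that neither coercivity nor uniqueness can rest on the $\beta$-term alone in $d=1$: on the kink directions above, the energy varies only through the fidelity and TV terms, and TV is convex but never strictly convex. So strict convexity of the full functional requires that the joint quadratic form $\phi\mapsto\int_0^T\|L_u\phi\|_{L^2(\Omega)}^2\,dt+\beta\lim_{\varepsilon\to0^+}\|\nabla^2\phi\|_{L^2(\Omega\setminus\bar{B}_\varepsilon)}^2$ have trivial kernel, i.e.\ the data $u$ must separate the kink directions---this is what the paper implicitly assumes when it asserts $\mathcal{E}$ is strictly convex without proof, and what your write-up should state as a hypothesis or verify for the given $u$. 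Likewise, boundedness of the minimizing sequence (needed before invoking reflexivity or Eberlein--\v{S}mulian, and left implicit in the paper) must in $d=1$ draw on the fidelity term rather than on $\beta\|\nabla^2\phi\|^2$ alone. Your remaining ingredients are sound and match the paper's setup: $L_u:P\to L^2([0,T]\times\Omega)$ is bounded by the convolution--multiplication estimates under $k>(d+2)/4$, and each term is convex and strongly continuous, hence sequentially weakly lower semicontinuous.
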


We consider time-dependent data of the aggregation equation from a single initial condition.  Instead of utilizing many realizations of the PDE from multiple initial conditions, we adopt this setting for practical considerations: (i) The dynamics of different populations may follow different potentials, and it is better not to combine data sets from different populations.  (ii) It is challenging to conduct different experiments on the same group of wild animals that it is more practical to consider a single realization of the PDE.   Since our data set is from a single initial condition that may contain noise, we tackle these difficulties by imposing additional constraints, such as regularity via successively denoised differentiation, (adaptive) compact support constraint, and symmetric constraint, as discussed in the following sections.
\begin{Rem}
	%The problem considered in this paper is similar to the inverse problem, such as the 127 transmission traveltime tomography problem, studied in \cite{leung2006adjoint, sei1995convergent,taillandier2009first,leung2021level}, in which the authors try to recover the velocity function of the eikonal equation from first-arrival travel-time measurements on the boundary.   The settings are different. In the transmission traveltime tomography problem, one can only access the solution of the PDE on boundaries. The problem is formularized as a PDE constrained optimization problem. To get robust recoveries, the data set are generated using various source locations so that the velocity functions can be characterized. In this paper, our data is the discretized solution of the aggregation equation from a single initial condition. We adopt this setting with practical considerations: (i) The dynamics of different populations may follow different potentials. Thus we cannot combine data sets from different populations. (ii) It is difficult to conduct a series of experiments on a the same group of wild animals. Although our data set contains the discretized solution on all grids, it is from a single initial condition and may not fully characterize the potential, which increases the difficulty of the task. We tackle these difficulties by imposing additional constrains, such as the regularity penalty, symmetric constraint and compact support constraint, as disccused in this and the following sections.}
	The problem setting of this paper is related to but different from many inverse problems. For example, in transmission travel-time tomography  \cite{leung2006adjoint, sei1995convergent,taillandier2009first,leung2021level}, the objective is to recover a velocity function in the Eikonal equation from the first-arrival travel-time measurements on the final time data.  One can only access the PDE solution on the time (or spatial) boundary. These inverse problems are typically formularized as a PDE constrained optimization problem. The time (or spatial) boundary sets are collected using multiple source locations for a robust recovery. In this paper, we consider a single process (with a single initial condition) of time-dependent data collected at every spatial and temporal grid point of the domain.
\end{Rem}	

\section{The proposed numerical scheme}\label{sec_method}

%\subsection{A Split Bregman Scheme}
%\label{sec.Bregman}
The split Bregman method~\cite{goldstein2009split} is a popular iterative algorithm, which has been successfully applied in image processing~\cite{papafitsoros2014combined} with mixed regularization terms. %Model (\ref{eq.min.reg}) contains one fidelity term and two regularization terms which is in the same form as many image regularization models, like the ROF model and the Euler's elastica model in image denoising. In this subsection,
In this paper, we use the split Bregman method to design an iterative numerical scheme to minimize (\ref{eq.min.reg}).

We first introduce a vector-valued variable $\bfpsi$ such that (\ref{eq.min.reg}) is equivalent to the following constrained minimization problem
\begin{align}
	\begin{cases}
		{\displaystyle  \min_{\phi,\bfpsi} \left[\frac{1}{2}\int_0^T\int_{\Omega} |u_t-L_u\phi|^2d\bx dt+ \alpha \int_{\Omega}|\bfpsi| d\bx +\frac{\beta}{2}\int_{\Omega} |\nabla^2\phi|^2d\bx\right]},\\
		\bfpsi=\nabla\phi.
	\end{cases}
	\label{eq.min.constraint}
\end{align}
By introducing an additional penalty to quantify the mismatch between $\nabla\phi$ and $\bfpsi$, \eqref{eq.min.constraint} can be approximated by the following unconstrained problem
\begin{align}
	\min_{\phi,\bfpsi} \left[\frac{1}{2}\int_0^T\int_{\Omega} |u_t-L_u\phi|^2d\bx dt+ \alpha \int_{\Omega}|\bfpsi| d\bx +\frac{\beta}{2}\int_{\Omega} |\nabla^2\phi|^2d\bx+ \frac{\lambda}{2} \int_{\Omega} |\nabla\phi-\bfpsi|^2d\bx\right],
	\label{eq.min.unconstraint}
\end{align}
where $\lambda>0$ is a weight parameter. %of the penalty of the mismatch between $\nabla \phi$ and $\bfpsi$.

We introduce an auxiliary variable $\bb$ in the same space as $\bfpsi$.
We solve (\ref{eq.min.unconstraint}) using Bregman iterations.
We set
%\begin{align}
$ (\phi^0,\bfpsi^0,\bb^0)=(\phi_0,\bfpsi_0,\bb_0)$ as the initial, and
%\end{align}
update $(\phi^k,\bfpsi^k,\bb^k)$ to $(\phi^{k+1},\bfpsi^{k+1},\bb^{k+1})$ as follows:
\begin{align}
	(\phi^{k+1},\bfpsi^{k+1})&=\argmin_{\phi,\bfpsi}\bigg[\frac{1}{2}\int_0^T \int_{\Omega}|u_t-L_u\phi|^2d\bx dt+\alpha \int_{\Omega}|\bfpsi| d\bx\nonumber \\
	&\hspace{2cm}
	+ \frac{\beta}{2}\int_{\Omega} |\nabla^2\phi|^2d\bx + \frac{\lambda}{2}\int_{\Omega} |\bb^k+ \nabla\phi-\bfpsi|^2 d\bx\bigg], \label{eq.Bregman0.1}\\
	\bb^{k+1} &=  \bb^k+\nabla \phi^{k+1}-\bfpsi^{k+1}. \label{eq.Bregman0.2}
\end{align}
It is difficult to solve (\ref{eq.Bregman0.1}) directly. In this paper we adopt the operator-splitting method \cite{glowinski2019finite,liu2020color,he2020curvature,liu2021operator}. We refer the readers to \cite{glowinski2017splitting} for a detailed discussion on the operator-splitting method.  We update $\phi^{k+1},\bfpsi^{k+1}$ as
\begin{align}
	&\phi^{k+1}=\argmin_{\phi}\Bigg[\frac{1}{2}\int_0^T \int_{\Omega} |u_t-L_u\phi|^2d\bx dt %\nonumber \\ &\hspace{5cm}
	+ \frac{\beta}{2}\int_{\Omega} |\nabla^2\phi|^2d\bx \nonumber\\
	&\hspace{3cm} + \frac{\lambda}{2}\int_{\Omega} |\bb^k + \nabla\phi-\bfpsi^k|^2 d\bx\Bigg], \label{eq.split.1}\\
	&\bfpsi^{k+1}=\argmin_{\bfpsi} \left[\alpha \int_{\Omega}|\bfpsi| d\bx+ \frac{\lambda}{2}\int_{\Omega} |\bb^k+ \nabla\phi^{k+1}-\bfpsi|^2 d\bx\right]. \label{eq.split.2}
\end{align}
The explicit formulas for $\phi^{k+1}$ and $\bfpsi^{k+1}$ are derived as follows: We denote the adjoint operator of $L_u$ by $L^*_u$. According to the Euler-Lagrange equation of (\ref{eq.split.1}), we obtain the following optimality condition for $\phi^{k+1}$:
\begin{align}
	\int_0^T \left(L^*_uL_u\phi^{k+1} -L^*_uu_t \right)dt+\beta \nabla^4\phi^{k+1} +\lambda (\nabla\cdot (\bfpsi^k-\bb^k)-\nabla^2\phi^{k+1})=0,
	\label{eq.Bregman.1}
\end{align}
which is linear in $\phi^{k+1}$ and therefore can be easily solved. Here $\nabla^4=\nabla^2\circ \nabla^2$.
For (\ref{eq.split.2}), we have the closed form solution $\bfpsi$ using the shrinkage operator \cite{donoho1995noising}
\begin{align}
	\bfpsi^{k+1}=\max\left( 0, 1-\frac{\alpha}{\lambda |\bp|}\right)\bq \quad \mbox{with} \quad  \bq=\bb^k+ \nabla\phi^{k+1}.
	\label{eq.Bregman.2}
\end{align}
The above procedure is repeated until 	
\begin{align}
	\|\phi^{k+1}-\phi^k\|_{\infty}<\varepsilon\label{eq_terminate}
	%1\times10^{-5}
\end{align}
for some small $\varepsilon>0$.
This iterative algorithm is summarized in Algorithm \ref{alg1}.
\begin{algorithm}[t]
	\SetKwInOut{KwIni}{Initialization}
	\KwIn{$\phi^0,\bfpsi^0,\bb^0$, parameters $\alpha,\beta,\lambda,\varepsilon$.}
	\While{(\ref{eq_terminate}) is not satisfied}{
		\textbf{Step 1}: Update $\phi^{k+1}$ by solving (\ref{eq.Bregman.1}).\\
		\textbf{Step 2}: Update $\bfpsi^{k+1}$ according to (\ref{eq.Bregman.2}). \\
		\textbf{Step 3}: Update $\bb^{k+1}$ according to (\ref{eq.Bregman0.2}).
	}
	\KwOut{Identified potential $\phi^{k}$.}
	\caption{Potential identification scheme}
	\label{alg1}
\end{algorithm}

\begin{Rem}
	Our algorithm can be applied to identify the potential in a large class of nonlocal PDEs. As long as the PDE is linear in the potential, one can always formularize the problem as minimizing a functional in the form of (\ref{eq.min.reg}) and apply Algorithm \ref{alg1}.
\end{Rem} 

%	Set $(\phi^0,\bfpsi^0,\bb^0)=(\phi_0,\bfpsi_0,\bb_0).$ We update $(\phi^k,\bfpsi^k,\bb^k)$ via
%	\begin{align}
%%		& \int_0^T \left(L^*_uL_u\phi^{k+1} -L^*_uu_t \right)dt+\beta \nabla^4\phi^{k+1} +\lambda (\nabla\cdot (\bfpsi^k-\bb^k)-\nabla^2\phi^{k+1})=0  \mbox{  for } \phi^{k+1},  \label{eq.Bregman.1}\\
%%		%&, \nonumber \\
%%		%& \mbox{ and compute}
%		&\bfpsi^{k+1}=\max\left( 0, 1-\frac{\alpha}{\lambda |\bp|}\right)\bp \mbox{ with } \bp=\bb^k+ \nabla\phi^{k+1}, \label{eq.Bregman.2}\\
%		&\bb^{k+1} =  \bb^k+\nabla \phi^{k+1}-\bfpsi^{k+1}. \label{eq.Bregman.3}
%	\end{align}	
%	The algorithm terminates if the following criteria is satisfied
%	\begin{align}
%		\max_{\Omega}|\phi^{k+1}-\phi^k|<tol\label{eq_terminate}
%		%1\times10^{-5}
%	\end{align}
%	for some small $tol$. \textcolor{red}{Is it common to use  $tol$ in numerical papers?} In this paper, we set $tol=10^{-5}$.

\subsection{Numerical discretization}\label{sec.dis}

We present the discretized algorithm for $d=1$. The algorithm in high dimensions can be derived analogously.
% When $d=1$, although $u$ is defined in $R$, in practice it has a compact support.
We choose $L$ large enough such that $u$ and $\phi$ are supported in $\Omega=[-L,L]$. The spatial domain $[-L,L]$ is discretized by equidistant nodes $\{x_i\}_{i=-M}^M$ with step size $\Delta x=L/M$. The time domain $[0,T]$ is discretized by equidistant nodes $\{t^n\}_{n=0}^N$ with step size $\Delta t=T/N$. Denote $u_i^n=u(t^n, x_i), u^n=u(t^n,x)$ and $\phi_i=\phi(x_i)$. The given data set is
\begin{align}
	\mathcal{U}=\{U_{i}^n: i=-M,...,M, \ n=0,...,N\}, \text{ where each datum is }	U_i^n=u_i^n+\varepsilon_i^n,
	\label{eq.data.noise}
\end{align}
with $\varepsilon_i^n$ being some random noise with mean 0. We denote the set of given data at time $t=t^n$ by $U^n=\{U_i^n: i=-M,...,M\}$.

For any function $v(t,x)$, we define the forward (+) and backward (-) approximation of $\partial v/\partial x$ as
\begin{align*}
	(D_x^+ v)_i^n=
	\begin{cases}
		-v_i^n/\Delta x, &\mbox{if } i=M,\\
		(v_{i+1}^n-v_{i}^n)/ \Delta x, &\mbox{if } i<M,
	\end{cases}
	\ 
	(D_x^- v)_i^n=
	\begin{cases}
		v_i^n/\Delta x, &\mbox{if } i=-M,\\
		(v_i^n-v_{i-1}^n)/ \Delta x, &\mbox{if } i>-M.
	\end{cases}
\end{align*}
For the simplicity of notation, we omit the parenthesis and denote $(D_x^+ v)_i^n$ and $(D_x^- v)_i^n$ by $D_x^+ v_i^n$ and $D_x^- v_i^n$, respectively.
The central difference approximation of $\partial u$ is then denoted as $D_x=\frac{1}{2}(D_x^+ +D_x^-)$. We approximate the Laplacian $\nabla^2 $ by $D_x^-D_x^+ $, which recovers the central difference approximation. The time derivative is approximated by the forward Euler scheme
$$\left(\frac{\partial v}{\partial t}\right)_i^n\approx D_tv_i^n=(v_i^{n+1}-v_i^n)/\Delta t,$$
where $D_t$ represents the forward time difference operator.

Let $p,q$ be two functions supported on $[-L,L]$. The discretized convolution $p*q$ is computed as
\begin{align*}
	(p*q)_i=\sum_{j=-M}^{M} p_jq_{i-j},\ j=-M,...,M,
\end{align*}
where $q_{j-i}=0$ for $|j-i|>M$ are used.

%In the rest of this paper,
%\subsection{A Finite Volume Scheme to Discretize $L_u\phi$}
%\label{sec.discretize.L}
In the aggregation equation, $u$ usually represents the population density, which follows the conservation law. To keep the conservation property, we use the finite volume method to approximate $L_{u^n}\phi$:
\begin{equation} \label{eq:L}
	L_{u^n}\phi=\nabla\cdot F \quad \mbox{with} \quad F=u^n(\nabla u^n*\phi).
\end{equation}
In the finite volume method, $F$ is known as flux.
We denote the value of $F$ at $x_i$ by $F_i$ and let $F_{i+1/2}=(F_i+F_{i+1})/2$. A conservative way to approximate $\nabla \cdot F_i$ is
\begin{align}
	\nabla \cdot F_i=\frac{F_{i+1/2}-F_{i-1/2}}{\Delta x}.
	\label{eq.flux}
\end{align}
From the given data set $\mathcal{U}$, the discrete analogue of $F_i$ is computed as
$$
F_{i}=U_{i}^n(\nabla (U^n*\phi)_{i})=U_i^n ((D_xU^n)*\phi)_{i}=U_i^n \sum_{j=-M}^{M} D_xU^n_{j}\phi_{i-j}.
$$
%Here we denote the given data as $\{U_i^n\},i=-M,...,M, n=0,...,N$ and denote the set of data at time $t=t^n$ by $U^n$.
Substituting $F_i$ into (\ref{eq:L}) gives rise to the discrete analogue of $L_{u^n}\phi$:
\begin{align}
	(L_{U^n}\phi)_i=\nabla \cdot F_i&= \frac{1}{2\Delta x} \left(U_{i+1}^n \sum_{j=-M}^{M}  D_x U^n_{j}\phi_{i+1-j} - U_{i-1}^n \sum_{j=-M}^{M} D_x U^n_{j}\phi_{i-1-j}\right)\nonumber\\
	&=\frac{1}{2\Delta x} \left( \sum_{j=-M+1}^{M+1} U_{i+1}^n D_x U^n_{j-1}\phi_{i-j} - \sum_{j=-M-1}^{M-1} U_{i-1}^n D_xU^n_{j+1}\phi_{i-j}\right)\nonumber\\
	&=\frac{1}{2\Delta x} \sum_{j=-M}^{M} \left[U_{i+1}^n D_xU^n_{j-1}-U_{i-1}^n D_xU^n_{j+1}\right]\phi_{i-j},
	\label{eq.dis.F}
\end{align}
where we use $U_j^n=0$ for $|j|\geq M$. %In the above computation, SDD is only applied when computing $F_i$, i.e., for $D_x^+ U_j^n$ and $D_x^- U_j^n$. We do not apply SDD in computing $\nabla \cdot F_i$ to ensure that the approximated $L(U^n)\phi$ is a linear operator of $\phi$.

\subsection{Details on the algorithm and denoising}%(\ref{eq.Bregman.1})-(\ref{eq.Bregman.3})}
\label{sec.discretize.Bregman}
%Whe $d=1$, $\bfpsi, \bb$ are scalers.
We next present details to solve each discretized subproblem in Algorithm \ref{alg1} when $d=1$. Formulas in higher dimensions can be derived similarly. When $d=1$, we use $\psi$ and $b$ to represent $\bfpsi,\bb$. We first derive an explicit formula of $\phi^{k+1}$. Since $L_{U^n}\phi$ is linear in $\phi$, we can find a set of matrices $\{A^n\}_{n=0}^{N}$ such that $L_{U^n}\phi=A^n\phi$. Such matrices can be easily constructed according to (\ref{eq.dis.F}). Then $L_{U^n}=A^n$ and $L^*_{U^n}=(A^n)^{\top}$. Since the forward Euler method $D_t$ is used to compute $D_tU^n$, we only have $D_tU^n$ for $n=0,...,N-1$. Therefore, the first equation (\ref{eq.Bregman.1}) can be discretized as
\begin{align*}
	&\sum_{n=0}^{N-1} \left[(A^n)^{\top}A^n\phi^{k+1}-(A^n)^{\top}D_tU^n\right]\Delta t+ \beta (D_x^-D_x^+)^2\phi^{k+1}\\
	&\quad +\lambda\left( D_x^-(\psi^k-b^k) -D_x^-D_x^+\phi^{k+1}\right)=0.
	%  \label{eq.phi}
\end{align*}
%Reorganize (\ref{eq.phi}), we get
Solving for $  \phi^{k+1}$, we obtain
\begin{align}
	\phi^{k+1}=&\left[ \sum_{n=0}^{N-1}\Delta t(A^n)^{\top}A^n+\beta (D_x^-D_x^+)^2-\lambda D_x^-D_x^+\right]^{-1}\nonumber\\
	&\quad\times \left(\sum_{n=0}^{N-1}\Delta t(A^n)^{\top}D_tU^n-\lambda D_x^-(\psi^k-b^k)\right).
	\label{eq.dis.1}
\end{align}

To update $\psi^{k+1}$ and $b^{k+1}$, we first compute $p=b^k+D_x^+ \phi^{k+1}$, and then $\psi^{k+1}$ and $ b^{k+1} $ are  updated as
\begin{align}
	\psi^{k+1}=\max\left( 0, 1-\frac{\alpha}{\lambda |p|}\right)p, \;\;
	\text{ and } \;\;  b^{k+1} =  b^k+D_x^+ \phi^{k+1}-\psi^{k+1}.
	\label{eq.dis.2}
\end{align}
%Finally, $b^{k+1}$ is updated as
%\begin{align}
%  b^{k+1} =  b^k+D_x^+ \phi^{k+1}-\psi^{k+1}.\label{eq.dis.3}
%\end{align}
%\subsection{Initialization of Scheme (\ref{eq.Bregman.1})-(\ref{eq.Bregman.3})}
%\label{sec.discretize.initial}
%Scheme (\ref{eq.Bregman.1})-(\ref{eq.Bregman.3}) is an iterative scheme and needs an initial condition $(\phi^0,\psi^0,b^0)$.
For the initial condition, a simple choice is
$ \phi^0=\psi^0=b^0=0 .$
Another choice is to set $\phi^0$ as the solution to
\begin{align*}
	\sum_{n=0}^{N-1} \Delta t\left[(A^n)^{\top}A^n\phi^0-(A^n)^{\top}D_tU^n\right]- \alpha D_x^-D_x^+\phi^{0}=0,
\end{align*}
which is the Euler-Lagrange equation of the discrete analogue of
$$
\frac{1}{2}\int_0^T \int_{-L}^L (u_t-L_u\phi)^2dx dt+\frac{\alpha}{2}\int_{-L}^L |\nabla \phi|^2dx.
$$
This choice provides a better initial guess of $\phi$. We then let $ \psi^0=D_x^+ \phi^0$ and set $ b^0=0$.

Identifying the underlying potential is challenging with noisy data since noise is amplified in numerical differentiation.
To stabilize the numerical differentiation, we apply the Successively Denoised Differentiation (SDD) proposed in \cite{he2020robust}.   We describe the case of $d=1$ here. Formulas in higher-dimensional cases can be derived in the same way. For the given data set $\mathcal{U}$, the Moving Least Square (MLS) method \cite{lancaster1981surfaces} can be used to denoise the data along the $x$-direction (denoted by $S_x$) or $t$-direction (denoted by $S_t$) respectively,
\begin{align*}
	&S_x \left[U_i^n\right] = p_i^n(x_i),&
	&\text{where  }\;\;p^n_i=\argmin_{p\in P_2}\sum_{j=-M}^M(p(x_j)-U_j^n)^2	\exp\left(-\frac{(x_j-x_i)^2}{h_x^2}\right)\;,&\\
	&S_t \left[U_i^n\right] = p_i^n(t^n),&
	&\text{where  }\;\;p_i^n=\argmin_{p\in P_2}\sum_{j=0}^N (p(t^j)-U_i^j)^2	\exp\left(-\frac{(t^j-t^n)^2}{h_t^2}\right)\;.&
\end{align*}
Here  $h_x$, $h_t>0$ are width parameters, and $P_2$ denotes the set of polynomials of degree no more than $2$.   SDD computes the partial derivatives of the given data set by applying MLS to  denoise the data first and then applying MLS again after each finite difference approximation to denoise each derivative:
\begin{align}\label{eq_sdd}
	&\left(\frac{\partial u}{\partial x}\right)_i^n \approx  S_xD_xS_xU_{i}^n\;,\;\left(\frac{\partial u}{\partial t}\right)_i^n \approx  S_tD_tS_xU_i^{n}.
\end{align}
In the model (\ref{eq.aggregation}), the computation of $L_u$ requires the partial derivatives of $u$. To keep the linearity and the conservative property of the discretization of $L_u$ in (\ref{eq:L}), we only apply SDD to $D_x^+ U_j^n$ and $D_x^- U_j^n$ in (\ref{eq.dis.F}).  %SDD stabilizes the numerical computation and identification \cite{he2020robust}.

\section{Adaptive support and symmetry  constraint} \label{sec.stability}
%Improvements for Robustness of Computation}
%Techniques to Improve the Robustness}

When we identify the underlying potential from noisy data, a mismatch between the true support $\widetilde{\Omega}$ of the potential function and the computational domain $\Omega$ may lead to unsatisfactory results. We propose an adaptive support scheme that learns the support during the potential identification process. We also consider a symmetry constraint on the potential to improve the performance further.
%When the input data $u$ is perturbed by noise, the computation can be unstable.  This instability comes from a few aspects: classical finite difference schemes estimating the partial derivatives often amplify the noise, and the possible mismatch between the true support $\Omega$ of the potential function and the computational domain $\Omega^+$.  In addition, we can further enforce the symmetry of the potential kernel.

\subsection{Adaptive support identification scheme}

%If the underlying potential kernel $\phi$ can be assumed to have compact support, we enforce this functional condition.
%We propose to adaptively find the compact support of the potential function $\phi$.
When the computational domain $\Omega$ contains the true support of the potential function $\widetilde{\Omega}$,  the identified potential on $\Omega\setminus\widetilde{\Omega}$ often have oscillations when the data is noisy.   If we know that $\widetilde{\Omega}\subseteq B(0,r)\subset \Omega$, we can suppress the oscillation of $\phi$ outside $\widetilde{\Omega}$ by enforcing that
\begin{align}
	\int_{\Omega\setminus B(0,r)}\phi^2\,d\x~\label{eq_L2}
\end{align}
is small. When (\ref{eq_L2}) incorporated into (\ref{eq.min.reg}), it serves as a penalty on the region outside the estimated support $B(0,r)$. Our goal is to automatically identify $r$ such that the boundary $\partial B(0,r)$ stays close to $\partial \widetilde{\Omega}$.

To learn the the optimal $r$, we start from a small $r^0$, and update $r$ in each iteration such that $r^{k+1}\geq r^k$, where $r^k$ is the estimated $r$ in the $k$-th iteration. Specifically,
%construct an increasing sequence of radii $r^k$, $k=0,1,\dots$ with $r^0=L/100$, such that as $k$ increases, the terminating condition~\eqref{eq_terminate} is met and the corresponding $B(0,r^k)$ gives a close approximation for the support of the underlying potential.
%
we incorporate the new regularization~\eqref{eq_L2} to~\eqref{eq.split.1} as follows
\begin{align}
	&\phi^{k+1}=\argmin_{\phi}\bigg[\frac{1}{2}\int_0^T \int_{\Omega} |u_t-L_u\phi|^2d\bx dt+ \frac{\beta}{2}\int_{\Omega} |\nabla^2\phi|^2d\bx\nonumber \\
	&\hspace{5cm} + \frac{\lambda}{2}\int_{\Omega} |\bb^k+ \nabla\phi-\bfpsi^k|^2 d\bx+\frac{\gamma}{2}\int_{\Omega\setminus B(0,r^k)}\phi^2\,d\x\bigg] \label{new_eq.split.1}
\end{align}
for some fixed weight parameter $\gamma>0$. Then the Euler-Lagrange equation  becomes
\begin{align*}
	&\int_0^T \left(L^*_uL_u\phi^{k+1} -L^*_uu_t \right)dt+\beta \nabla^4\phi^{k+1} %\nonumber\\&\hspace{2cm}
	+\lambda \left(\nabla\cdot (\bfpsi^k-\bb^k)-\nabla^2\phi^{k+1}\right) \\
	&\quad+\mathds{1}_{\Omega\setminus B(0,r^k)}\phi=0,
\end{align*}
where $\mathds{1}_{\Omega\setminus B(0,r^k)}(x)$ is the indicator function of ${\Omega\setminus B(0,r^k)}$ which is 1 if $x\in {\Omega\setminus B(0,r^k)}$ and is 0 otherwise.
When $d=1$, the updating formula of $\phi^{k+1}$ is
\begin{align}
	\phi^{k+1}=&\Bigg[ \sum_{n=0}^{N-1}(A^n)^{\top}A^n+\beta (D_x^-D_x^+)^2-\lambda D_x^-D_x^++\mathbf{1}_{\Omega\backslash B(0,r^k)}\Bigg]^{-1} \nonumber\\ &\quad\times \left(\sum_{n=0}^{N-1}(A^n)^{\top}D_tU^n-\lambda D_x^-(\psi^k-b^k)\right),\label{eq.phi.support}
\end{align}
where the matrix $A^n$ is defined in Section \ref{sec.discretize.Bregman}, , $\mathbf{1}_{\Omega\setminus B(0,r^k)}$ is a diagonal matrix whose $(i,i)$-th element is 1 if $x_i\in\Omega\setminus B(0,r^k)$ and  0 otherwise.

As for updating the radius from $r^k$ to $r^{k+1}$, we propose
\begin{align*}
	r^{k+1} &= \argmin_{r } \left[\frac{1}{2}|r-r^{k}|^2+\frac{\gamma}{2}\int_{\Omega\setminus B(0,r)}\left(\phi^{k+1}\right)^2\,dx\right],% \label{eq_sub3}
\end{align*}
which finds the optimal $r$ near $r^k$ such that the update from $\phi^{k}$ to $\phi^{k+1}$ on $B(0,r)$  is small.
%	The optimality condition gives rise to
%	\begin{align}
%		r^{k+1}=r^k - \frac{\gamma}{2}\frac{\partial}{\partial r}\left(\int_{\Omega\setminus B(0,r)}(\phi^{k+1})^2\,dx\right).\label{eq_r.0}
%	\end{align}
We update $r^{k+1}$ by a one-step of fixed-point method:
\begin{align}
	r^{k+1}&=r^k - \frac{\gamma}{2}\frac{\partial}{\partial r}\left(\int_{\Omega\setminus B(0,r)}(\phi^{k+1})^2\,dx\right)\Bigg|_{r=r^k}.\label{eq_r}
\end{align}
In (\ref{eq_r}), since $r$ controls the integrating domain, the partial derivative is always less than or equal to 0. Therefore we always have $r^{k+1}\geq r^k$  and \eqref{eq_r} produces an non-decreasing sequence of radii.  When $d=1$, this reduces to
\begin{align}
	r^{k+1}&=r^k + \frac{\gamma}{2}\left(\left(\phi^{k+1}(-r^k)\right)^2+\left(\phi^{k+1}(r^k)\right)^2\right)\;.
	\label{eq.r.update}
\end{align}

%{\color{blue}(To Hao: the sign in front of $\gamma$ should be $+$, since the domain for integration is $\Omega\setminus B(0,r)$ instead of $B(0,r)$)}
Our new adaptive support algorithm is summarized in Algorithm \ref{alg2}. Note that Algorithm \ref{alg1} is a special case of Algorithm \ref{alg2} with $\gamma=0$.
\begin{algorithm}[t]
	\SetKwInOut{KwIni}{Initialization}
	\KwIn{$\phi^0,\psi^0,b^0,r^0$, parameters $\alpha,\beta,\lambda,\gamma,\varepsilon$.}
	\While{(\ref{eq_terminate}) is not satisfied}{
		\textbf{Step 1}: Update $\phi^{k+1}$ according to (\ref{eq.phi.support}).\\
		\textbf{Step 2}: Update $\psi^{k+1}$ according to (\ref{eq.dis.2}) with $p=b^k+ D_x^+\phi^{k+1}$. \\
		\textbf{Step 3}: Update $b^{k+1}$ according to (\ref{eq.dis.2}).\\
		\textbf{Step 4}: Update $r^{k+1}$ according to (\ref{eq.r.update}).
	}
	\KwOut{Identified potential $\phi^{k}$.}
	\caption{An adaptive support scheme}
	\label{alg2}
\end{algorithm}

\subsection{Symmetric potential scheme}\label{sec_symm}
In many applications \cite{carrillo2019aggregation}, the potential $\phi$ is a radially symmetric function, in the form of $f(|x|)$ for some function $f:[0,\infty)\rightarrow \R$. In this case, we aim to find the values of $\phi$ along the radial direction. When $d=1$, the discretized potential satisfies  $\phi_{-i}=\phi_i$ for $i=0,...,M$.
When the potential is known to be symmetric, we can modify
Algorithm \ref{alg2} to enforce the symmetry constraint. The major modifications are about the discretization of $L_u\phi$ and how to handle the boundary condition. %here are several modifications:
%\begin{enumerate}
%\item

\noindent\emph{Discretization of $L_u\phi$}. After taking the symmetry of $\phi$ into account, we compute $F_i$ as
$$
F_{i}=U_i^n \sum_{j=-M}^{M} D_xU^n_{j}\phi_{i-j}=U_i^n \left(\sum_{j=-M}^{i} D_xU^n_{j}\phi_{i-j}+\sum_{j=i+1}^{M} D_xU^n_{j}\phi_{j-i}\right).
$$
Correspondingly,
\begin{align*}
	\nabla \cdot F_i&= \frac{1}{\Delta x} \sum_{j=-M+1}^{M-1} \left[U_{i+1}^n D_xU^n_{j-1}-U_{i-1}^n D_xU^n_{j+1}\right]\phi_{i-j}\\
	&= \frac{1}{\Delta x} \sum_{j=-M+1}^{i} \left[U_{i+1}^n D_xU^n_{j-1}-U_{i-1}^n D_xU^n_{j+1}\right]\phi_{i-j}%\\
	%  &\hspace{3cm}
	\\
	&\quad +  \frac{1}{\Delta x} \sum_{j=i+1}^{M-1} \left[U_{i+1}^n D_xU^n_{j-1}-U_{i-1}^n D_xU^n_{j+1}\right]\phi_{j-i}.
\end{align*}
%  \item

\noindent\emph{Natural boundary condition.} For the boundary condition of $\phi$, we have $\phi_M=0$ at $x=L$. When deriving the first variation of~\eqref{eq.split.1}, the boundary terms arising from the integration by parts are
\begin{align}
	\beta\phi_{xx}\eta_{x}|_{0}^L-\beta\phi_{xxx}\eta|_{0}^L+\lambda(\phi_x+b^k-\psi^k)\eta|_{0}^L,\label{eq_boundaryterm}
\end{align}
where $\eta$ denotes a test function (see Appendix~\ref{app_der} for details). After evaluating the functions at the boundary points with $\eta_x(0)=\eta_x(L)=\eta(L)=0$, the optimality condition gives rise to the constraint
\begin{align}
	-\beta\phi_{xxx}(0)+\lambda(\phi_x(0)+b^k(0)-\psi^k(0))=0.
\end{align}
After discretization, we obtain
\begin{align*}
	\beta\frac{\phi_{2}-2\phi_{1}+2\phi_{-1}-\phi_{-2}}{2\Delta x^3}+\lambda\left(\frac{\phi_{0}-\phi_{-1}}{\Delta x}+b^k_0-\psi^k_0\right)=0.
\end{align*}
Setting $\phi_2=\phi_{-2}$ and $\phi_{1}=\phi_{-1}$ gives
\begin{align}
	\phi_0=\phi_{1}+(\psi_0^k-b_0^k)\Delta x.
\end{align}
%\end{enumerate}

This scheme can be easily extended to high dimensions using polar coordinates. In Section~\ref{sec_num}, we discuss the effects of imposing symmetry to the potential recovery.

\section{Numerical experiments}\label{sec_num}

%	For all of the one-dimensional experiments where the split Bregman algorithm is used, we set $\lambda=0.05$; for the two-dimensional experiments, we set $\lambda =100$. In SDD, $h=0.04$ is used in all experiments.
In this section, we demonstrate the effectiveness and robustness of our proposed method through systematic experiments.
We denote the exact solution to (\ref{eq.aggregation}) with the underlying potential $\phi^*$ by $u^*$. Our noisy data is generated by  adding i.i.d. Gaussian noise to the discretized samples of $u^*$. The noise has $0$ mean and the standard deviation is $\sigma$.
We say the noise is $\rho\%$ if
\begin{align*}
	\sigma=\frac{\rho}{100}\left[\sum_{n=1}^N\sum_{i=-M}^M (u_i^n)^2\Delta x\Delta t\right]^{1/2}.
\end{align*}	
We use $\widetilde{u}$ to denote the denoised data.  We denote the identified potential from the noisy data set by $\widehat{\phi}$, with which the simulated solution of (\ref{eq.aggregation}) is denoted by $\widehat{u}$. When computing $\widehat{u}$, we use $\widetilde{u}^0$ (the denoised initial data) as the initial condition.
%We  let $\phi^*$ be the underlying potential function which numerically yields the density function $u^*$, and $\widehat{\phi}$ be the  potential identified from a perturbed density $\widetilde{u}^*$. We denote the numerical simulation associated with the potential by $\widehat{u}$.

We qualify the identified potential by the following relative errors
\begin{align}
	&e_{\phi}=\frac{\|\widehat{\phi}-\phi^*\|_1}{\|\phi^*\|_1}\times 100\%,\label{eq_phi_error}\\
	&e^*(t)= \frac{\|\widehat{u}(t,\cdot)-u^*(t,\cdot)\|_1}{\|u^*(t,\cdot)\|_1}\times 100\%\;,\;\;\;t\in[0,T],\label{eq_R_star}\\
	&\widetilde{e}(t)= \frac{\|\widehat{u}(t,\cdot)-\widetilde{u}(t,\cdot)\|_1}{\|\widetilde{u}(t,\cdot)\|_1}\times 100\%\;,\;\;\;t\in[0,T],\label{eq_R_tild}
\end{align}
where $\|v\|_1=\sum_{i,n} |v_i^n|$ is the $L^1$-norm of $v$ over the sampling grid. The error $e_{\phi}$ in \eqref{eq_phi_error} compares the identified potential with the exact potential. The error $e^*$ in \eqref{eq_R_star} compares the simulated data with the exact solution. The error $\widetilde{e}$ in ~\eqref{eq_R_tild} measures the difference between the simulated data and the denoised data. The first two errors require the exact potential or the exact data, while the third one only uses the denoised data. Hence,~\eqref{eq_R_tild} is more practical and can be used when the exact potential or data are not given.

When $\gamma=0$, Algorithm \ref{alg2} does not learn the support of the potential and is reduced to Algorithm \ref{alg1}. Therefore Algorithm \ref{alg1} is a special case of Algorithm \ref{alg2} and we use Algorithm \ref{alg2} for all experiments.
There are four parameters in Algorithm \ref{alg2}: $\alpha,\beta,\gamma$ and $\lambda$, where $\alpha$ and $\beta$ controls the smoothness of the identified potential, $\gamma$ controls the size of the support, and $\lambda$ is the weight of the penalty on the mismatch between $\nabla \phi$ and $\bfpsi$. The optimal choice of these parameters is problem-dependent. 
Here we give a guideline for the choice of these parameters based on their effects on the identified potential. If one assumes the potential contains singularities corresponding to non-collision conditions, one should use a large $\alpha$ and a small $\beta$. If the potential is assumed to be very smooth, then a larger $\beta$ should be used. For the parameter $\gamma$, a larger support of the potential implies a longer range of nonlocal interactions. If one assumes the behavior of each individual is affected by others in a large neighborhood of it, then one should use a small $\gamma$, i.e., a small penalty on the support. Otherwise, a large $\gamma$ should be used. For $\lambda$, larger $\lambda$ makes the functional (\ref{eq.min.unconstraint}) a better approximation of the original function (\ref{eq.min.reg}).
In our algorithm, we use $h=0.04$ in SDD.
Without specification, $\lambda=0.05$ and $r^0=L/100$ are used in our one-dimensional experiments.

\subsection{One-dimensional potential identification}

For all of the one-dimensional examples, we generate clean data by numerically solving \eqref{eq.aggregation} with the initial condition
%To demonstrate the behavior of the proposed model, for a various types of potentials $\phi$, we numerically solve the aggregation equation~\eqref{eq.aggregation}  from the initial condition~
\cite{carrillo2019aggregation}
\begin{align}
	u^*(0,x) = {0.15^{1/3}}M_0\max\left\{C_0 -\frac{|x|^2}{12\times 0.15^{2/3}}\;,0\right\}\;\label{Yao_initial}
\end{align}
on the computational domain $-1\leq x\leq 1$, $0\leq t\leq T$ for some maximal time $T$. Let $\Delta x = \Delta t = 0.01$. We set $M_0=0.6$, and $C_0$ is a normalization factor such that $\int_{\mathbb{R}}u^*(0,x)\,dx = M_0$.

%using the computational domain $-3\leq x\leq 3$, $0\leq t\leq T$ for some maximal time $T$, and $\Delta x = \Delta t = 0.01$. Then we truncate the spatial domain to $-1\leq x \leq1$ for identification. By doing this, we can reduce the instability caused by boundary values. We set $M_0=0.6$, and $C_0$ is a normalization factor such that $\int_{\mathbb{R}}u^*(0,x)\,dx = M_0$.

\subsubsection{Truncated repulsive-attractive power law}

\begin{figure}[t!]
	\begin{tabular}{c@{\hspace{2pt}}c@{\hspace{2pt}}c}
		(a)&(b)&(c)\\
		\includegraphics[trim={0 0 1.6cm 0},clip,width=0.3\textwidth]{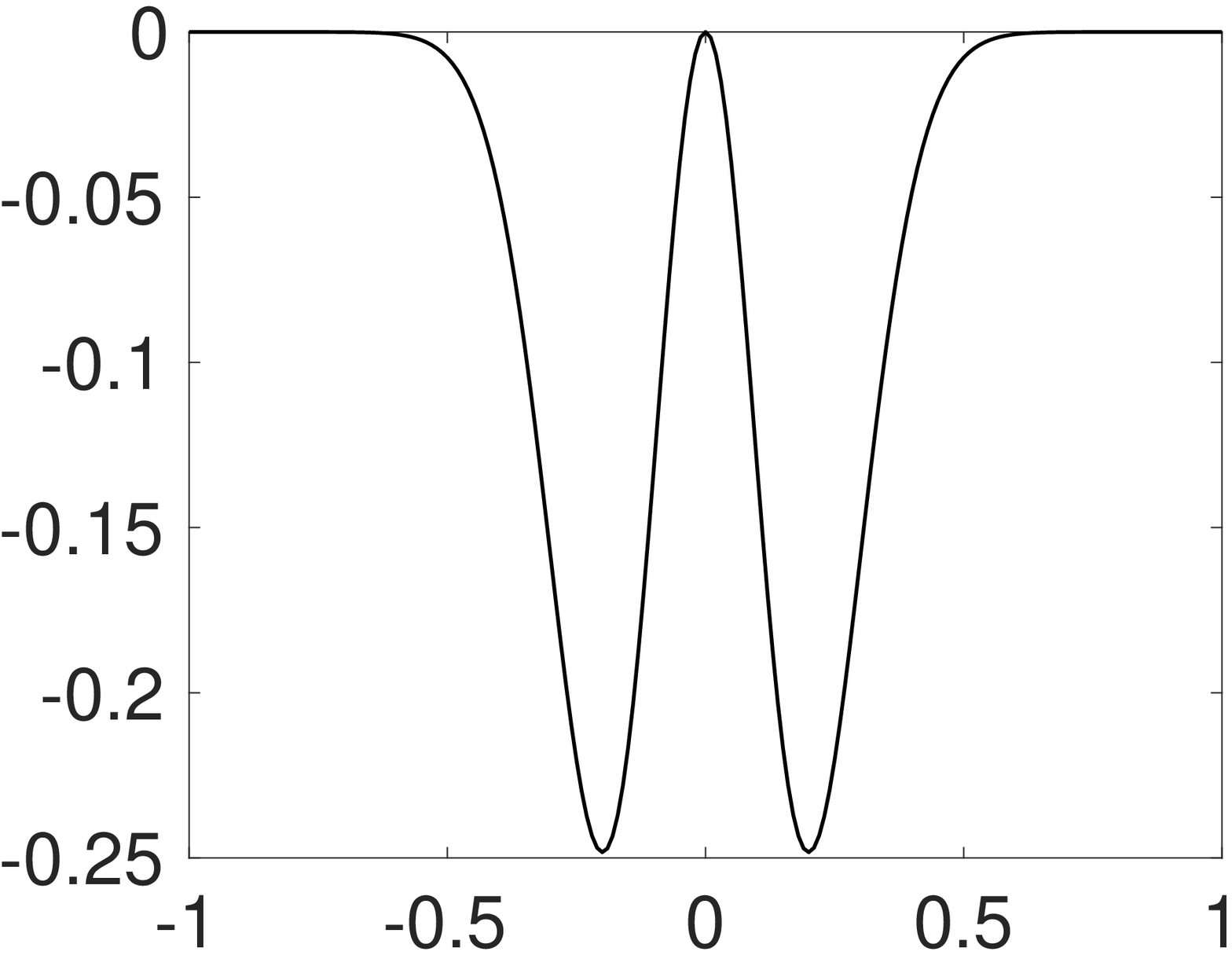}&
		\includegraphics[trim={0 0 1.6cm 0},clip,width=0.3\textwidth]{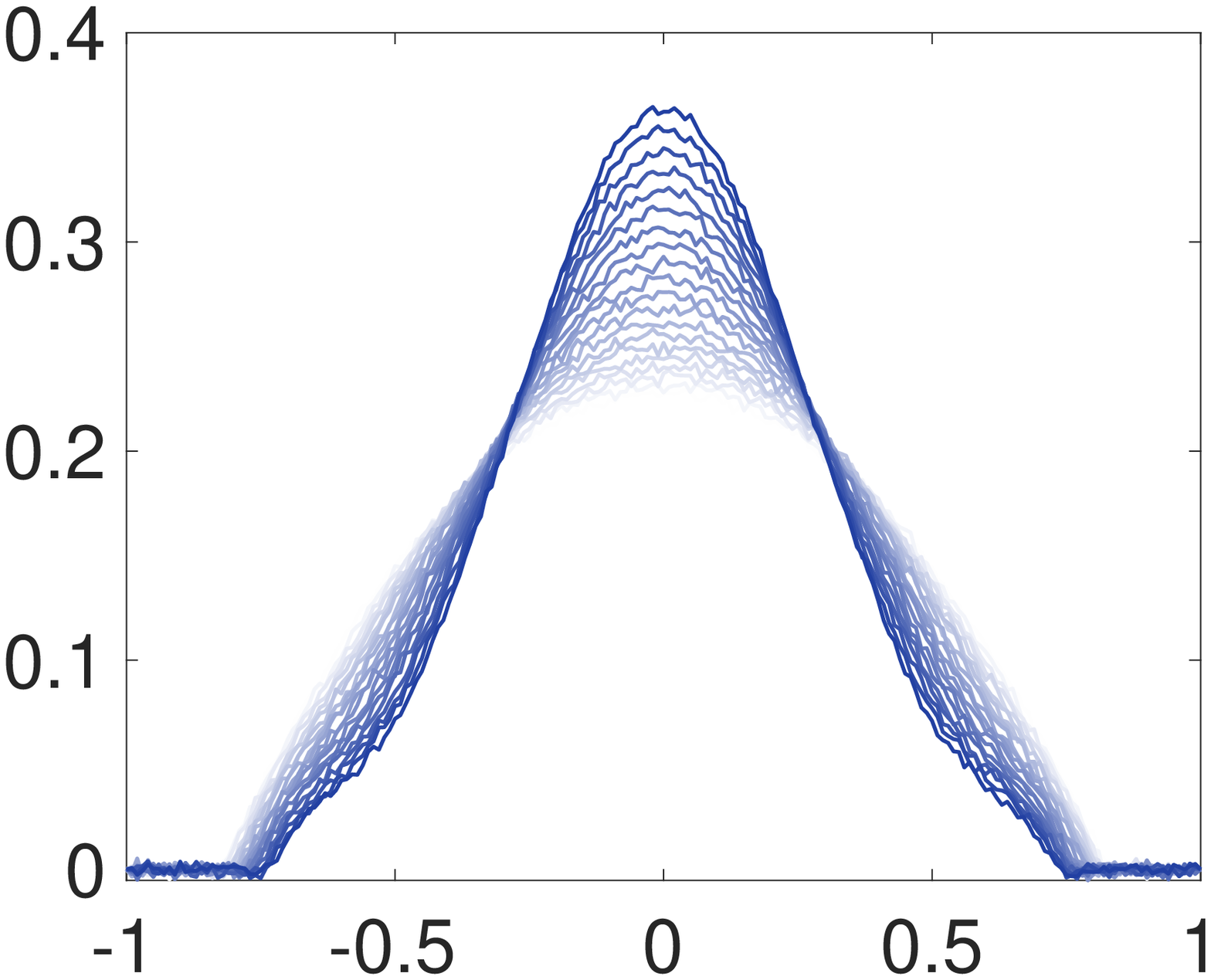}&
		\includegraphics[trim={0 0 1.6cm 0},clip,width=0.3\textwidth]{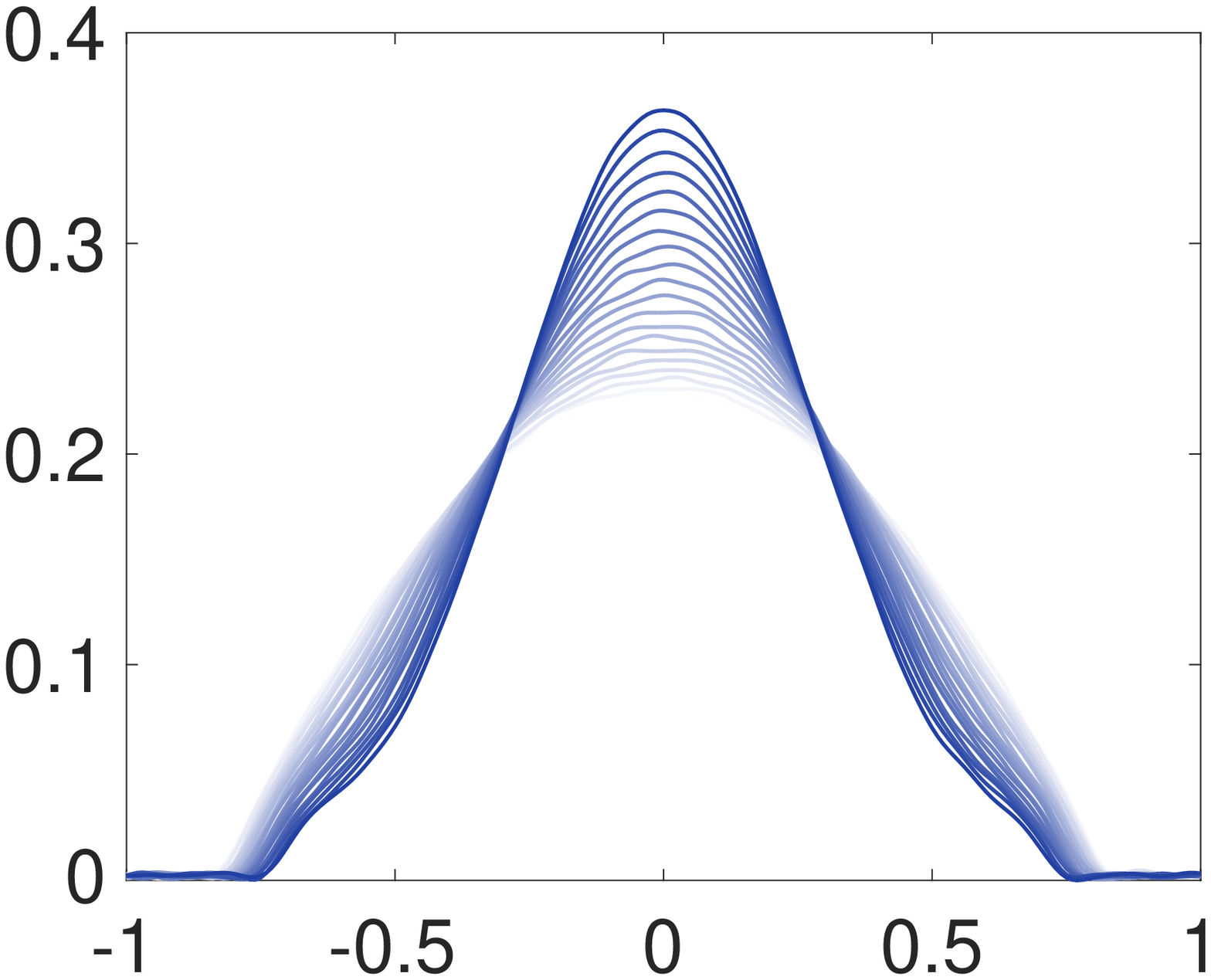}\\
		(d)&(e)&(f)\\
		\includegraphics[trim={0 0 1.6cm 0},clip,width=0.3\textwidth]{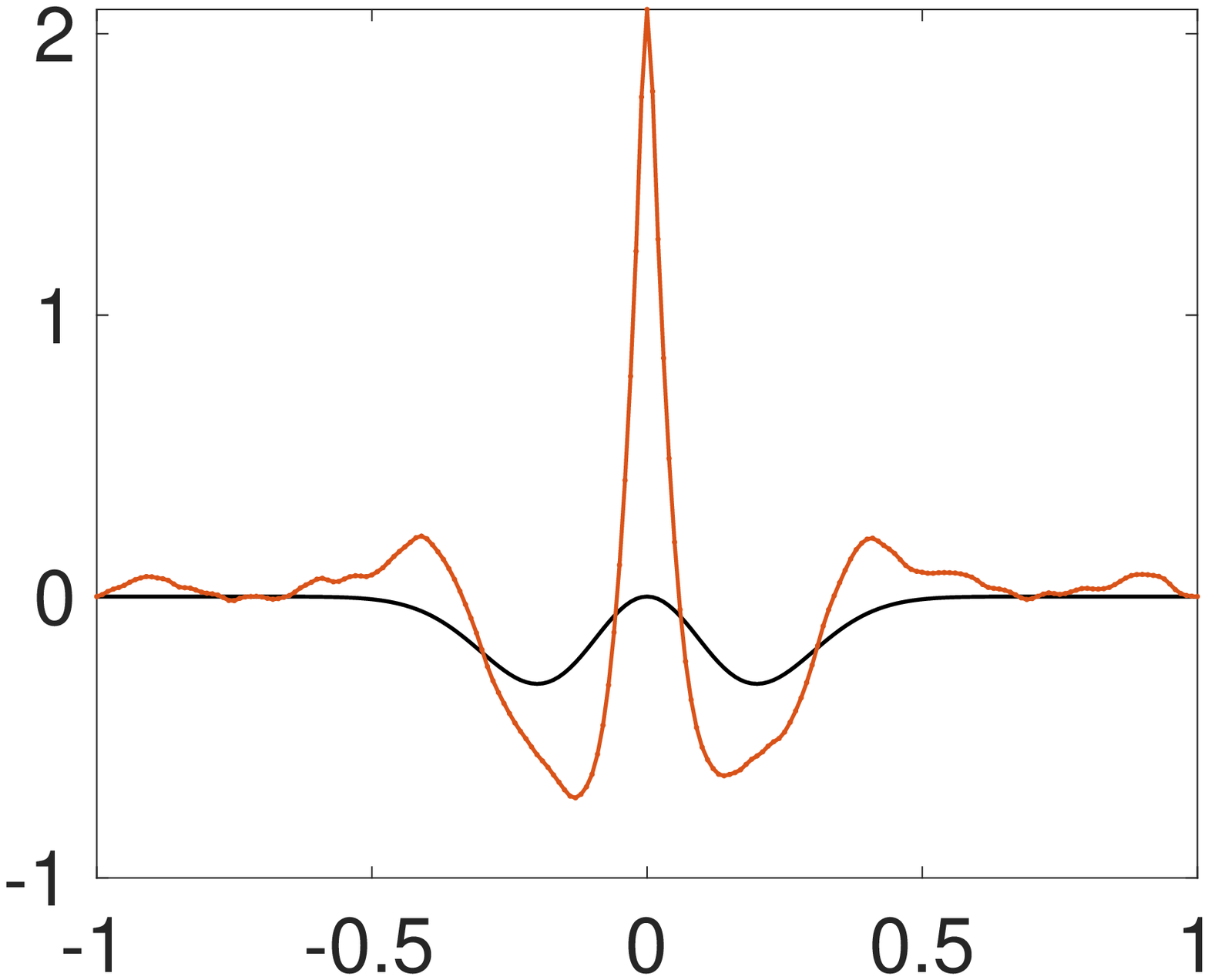}&
		\includegraphics[trim={0 0 1.6cm 0},clip,width=0.3\textwidth]{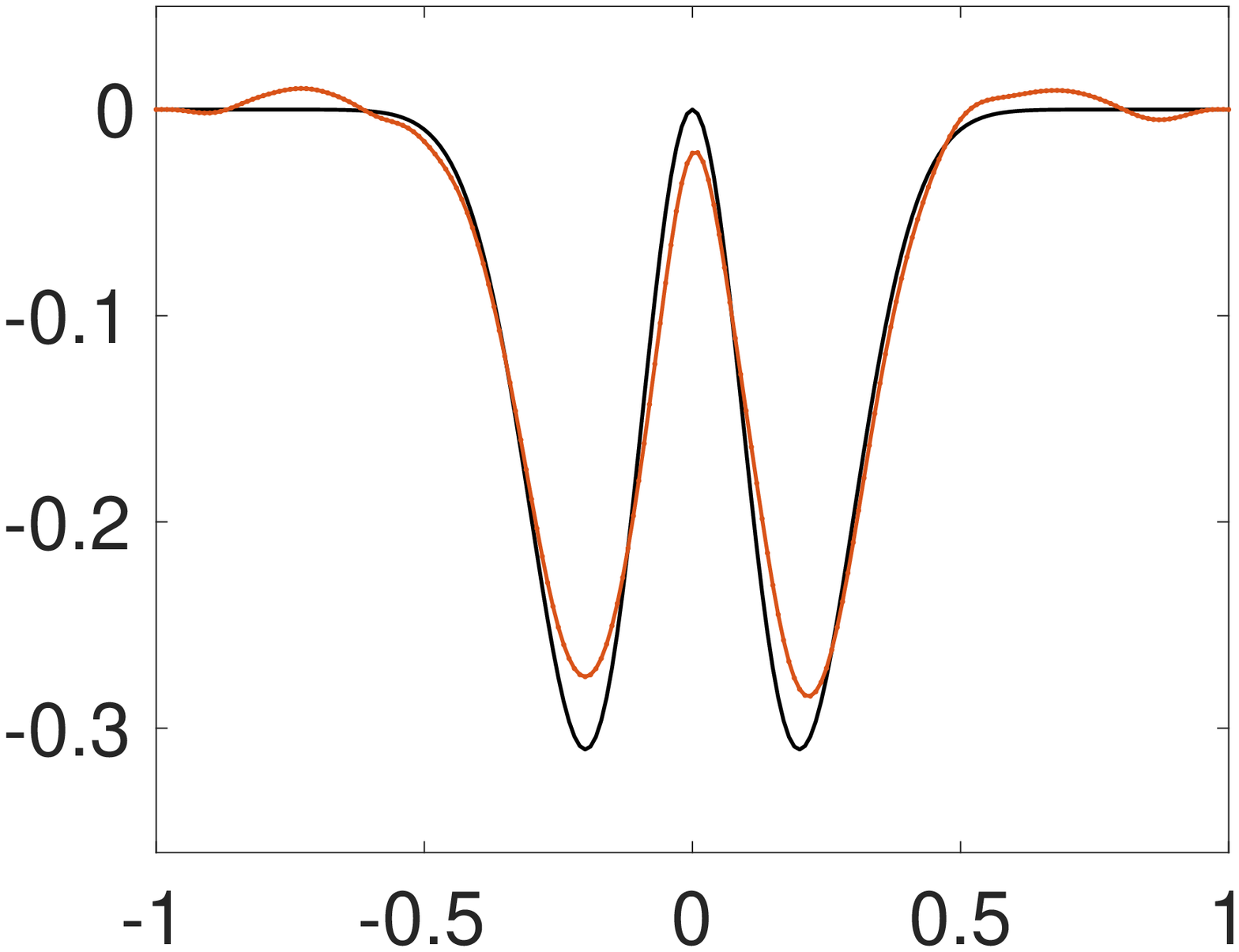}&
		\includegraphics[trim={0 0 1.6cm 0},clip,width=0.3\textwidth]{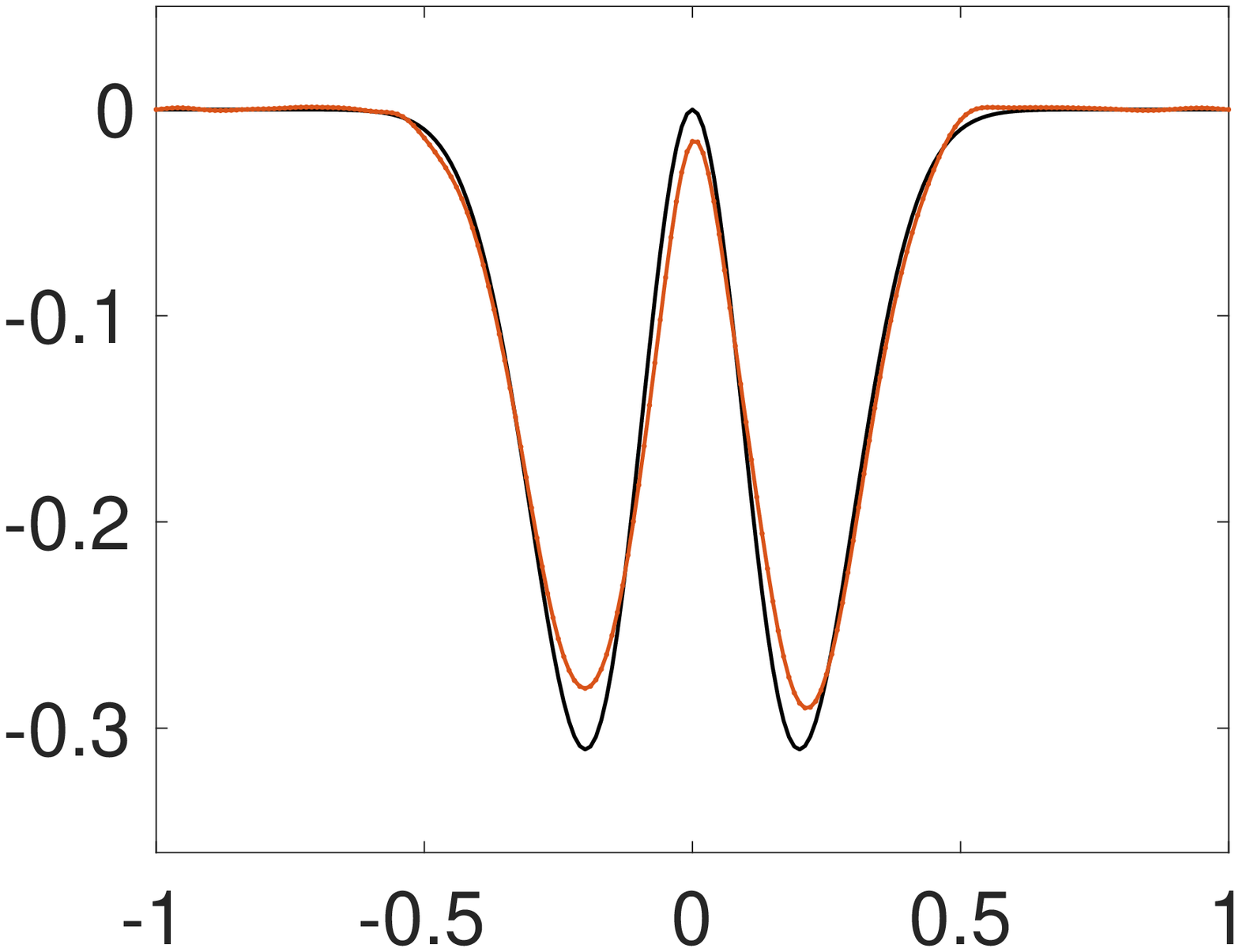}\\
		(g)&(h)&(i)\\
		\includegraphics[trim={0 0 1.6cm 0},clip,width=0.3\textwidth]{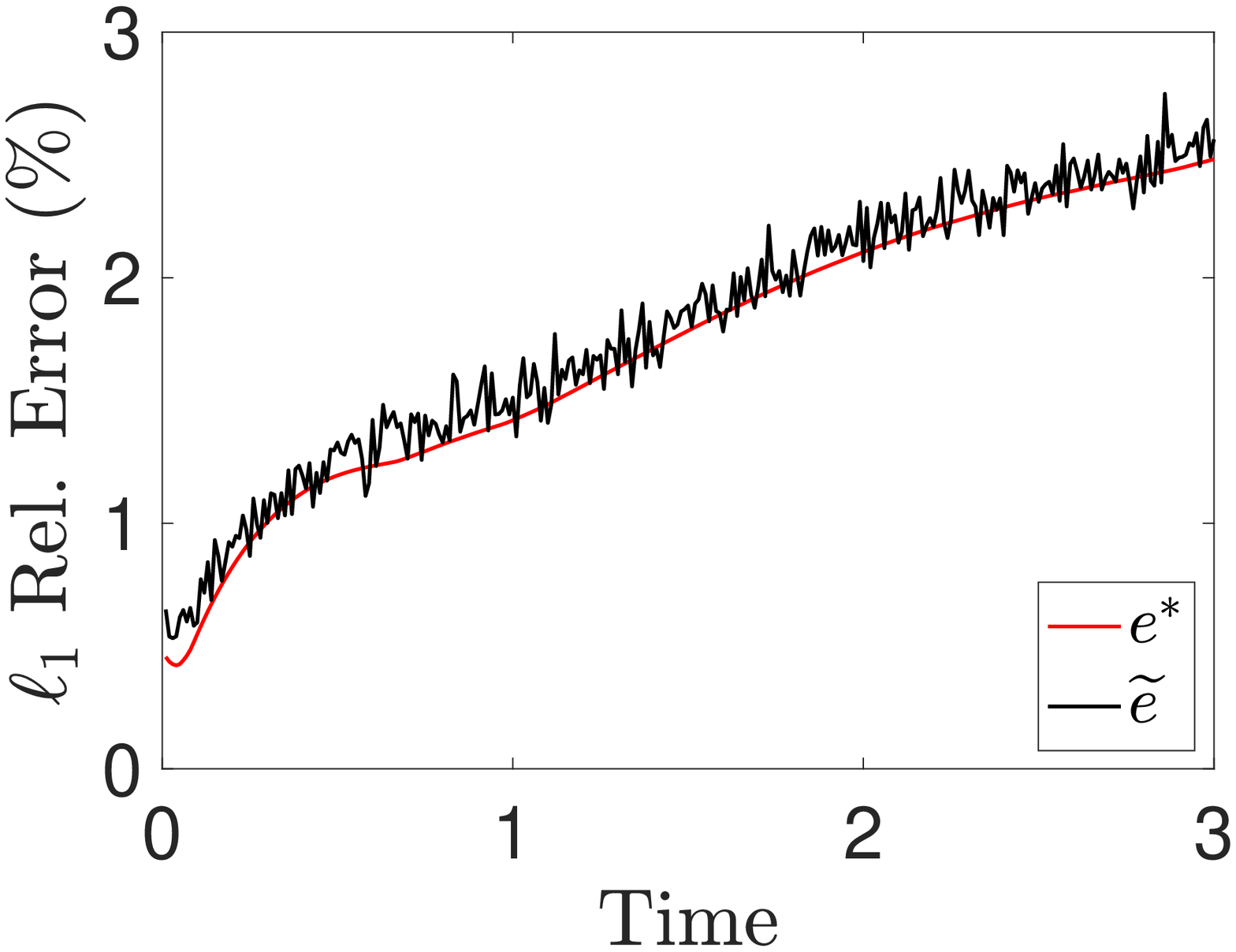}&
		\includegraphics[trim={0 0 1.6cm 0},clip,width=0.3\textwidth]{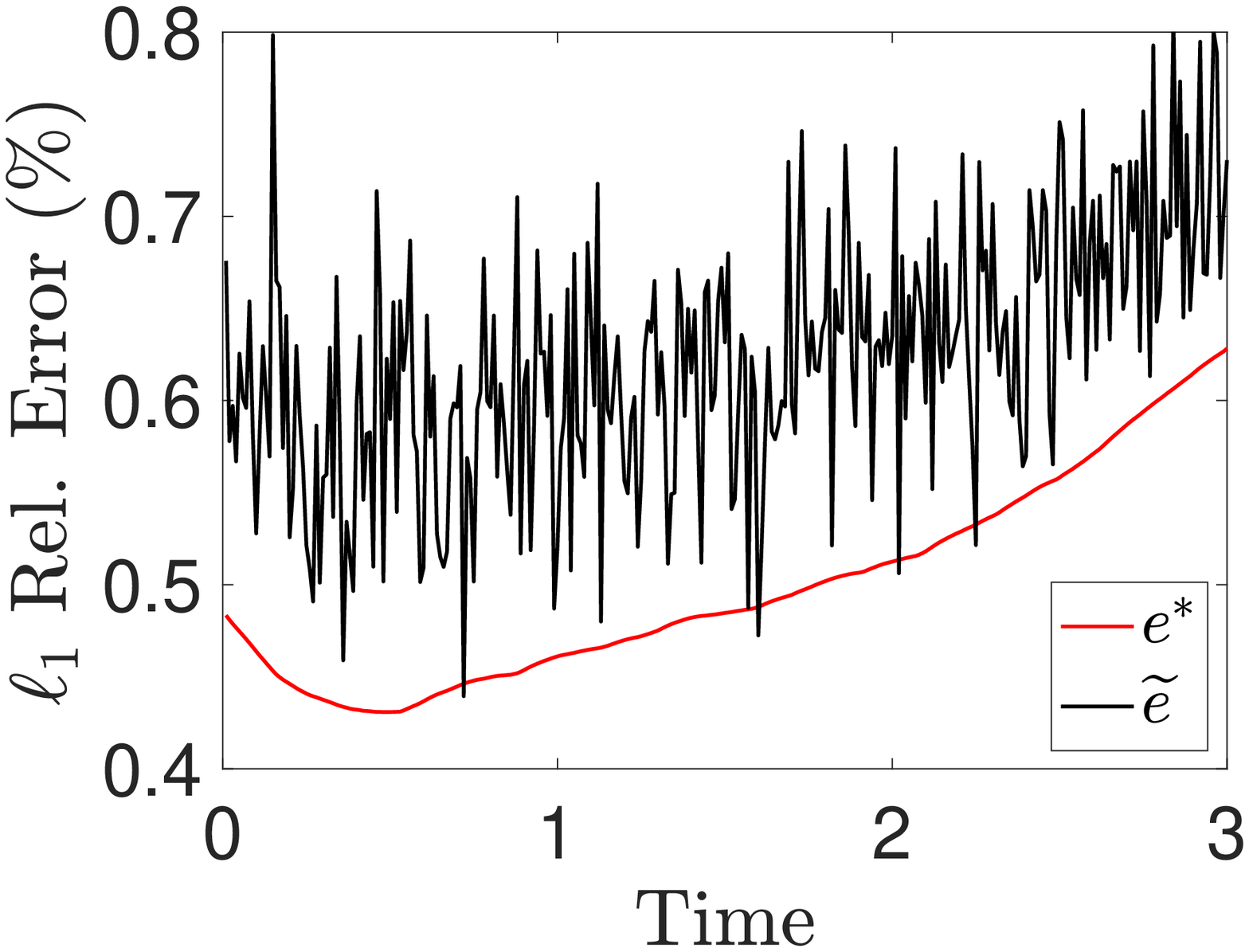}&
		\includegraphics[trim={0 0 1.6cm 0},clip,width=0.3\textwidth]{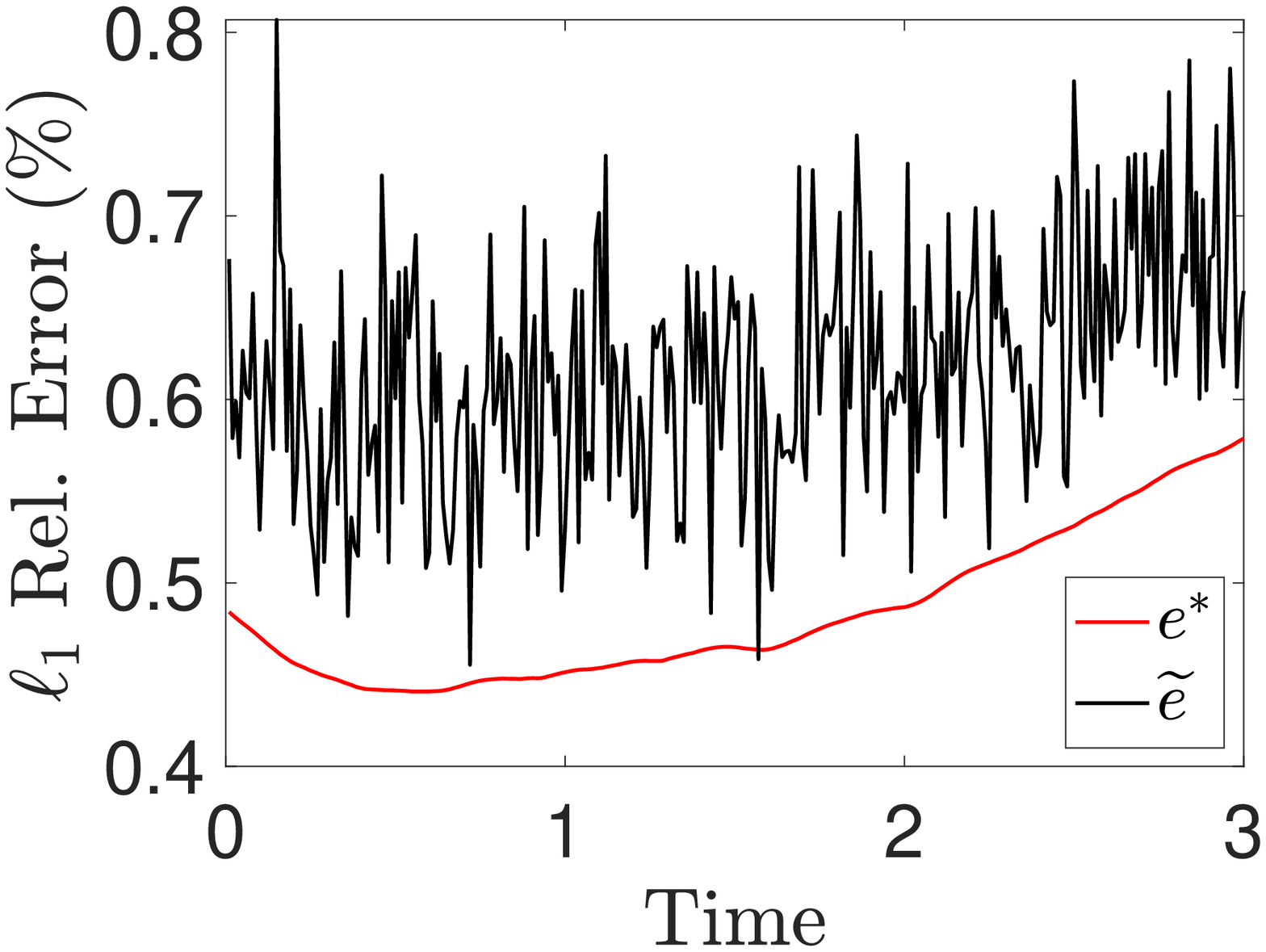}
	\end{tabular}
	\caption{1D example of the repulsive-attractive power law $\phi_{RA}$ in (\ref{eq_RA}). (a) The graph of $\phi_{RA}$.
		(b) The given noisy data with $1\%$ additive Gaussian noise. Each curve represents the data at a specific time.  (c) Denoised data $\widetilde{u}$ by SDD.   %In the second and third rows, the first column shows the identification with no denoising and $\gamma=0$; the second column shows the identification with SDD and $\gamma=0$, and the third column shows the identification with SDD and compact support $\gamma=10$.
		(d)  Exact $\phi_{RA}$ (black) and the identified $\widehat{\phi}$ (red) from the noisy data in (b) without any regularization, i.e., $\alpha=\beta=\gamma=0$. The error is $e_\phi=233.27\%$. (e) Exact $\phi_{RA}$ (black) and the identified $\widehat{\phi}$ (red) from the denoised data in (c) without adaptive support. The error is $e_\phi=13.31\%$.  (f) Exact $\phi_{RA}$ (black) and the identified $\widehat{\phi}$ (red) from the denoised data in (b) with the adaptive support scheme where $\gamma=10$. The error is $e_\phi=10.01\%$.  (g), (h), (i) show the errors $e^*(t)$ (red) and $\widetilde{e}(t)$ (black) for the identified potential in (d), (e), (f) as a function of $t$, respectively. %(h) The errors $e^*(t)$ (red) and $\widetilde{e}(t)$ (black) for the identified potential in (e). (i) The errors $e^*(t)$ (red) and $\widetilde{e}(t)$ (black) for the identified potential in (f).
	}
	%For (e) and (h), we set $\alpha=1\times10^{-5}$, $\beta=1\times10^{-7}$;$\gamma=0$ As for  (f) and (i),   $\alpha=1\times10^{-5}$, $\beta=1\times10^{-7}$, $\gamma=10$. }
	\label{exp1}
\end{figure}

Figure~\ref{exp1} shows the identification result of the truncated repulsive-attractive power law~\cite{carrillo2019aggregation}
\begin{align}
	\phi_{RA}(x;\theta_1,\theta_2,m_0) = m_0\left(\frac{|x|^{\theta_1}}{\theta_1} - \frac{|x|^{\theta_2}}{\theta_2}\right)\exp(-\frac{|x|^2}{4\tau^{2}})/\sqrt{4\pi\tau^2}\;\label{eq_RA}
\end{align}
with $\theta_1=5,\theta_2=2,\tau=0.1$, and $m_0=15$. The graph of $\phi_{RA}$ is shown in Figure \ref{exp1}~(a), whose variation is concentrated near the origin.
The clean data is generated by solving (\ref{eq.aggregation}) with potential (\ref{eq_RA}) and $\Delta x=0.01, \Delta t=0.01, T=3$. We add  $1\%$ Gaussian noise to obtain the noisy data in Figure \ref{exp1}~(b). The denoised data by SDD are shown in (c).  Figure \ref{exp1}~(d) shows the identified potential (red) from the noisy data without regularization, i.e., $\alpha=\beta=\gamma=0$. Such an identification has a large error with $e_\phi=233.27\%$. %Visually, although (a) and (b) are similar to each other,
The unstable recovery in (d) results from the noise amplification in numerical differentiation. The corresponding errors $e^*(t)$ and $\tilde{e}(t)$ are displayed in (g), which show the mismatch between the exact (or denoised) data and the simulated solution based on the identified potential from noisy data.
Denoising and regularization are important for a stable identification. In (e) and (f), we employ SDD and utilize regularization by setting $\alpha=1\times10^{-5}$ and $\beta=1\times10^{-7}$. The result in (e) does not use the adaptive support scheme, i.e., $\gamma=0$, while in (f), we set \tr{ $\gamma=10$}. Regularization significantly improves the identification result. For the identified kernel in (e) and (f), the error $e_{\phi}$ are $13.31\%$ and $10.01\%$ respectively. The errors $e^*(t)$ and $\widetilde{e}(t)$ as a function of $t$ are shown in (h) and (i), respectively.
The identified potential in (e) has oscillations near the boundary without the adaptive support scheme. These oscillations are eliminated in (f) by adopting our adaptive support scheme.

\subsubsection{Truncated Morse potential and Topaz potential}	
Figure~\ref{exp2} shows the identification results of  the
truncated Morse potential~\cite{d2006self}
\begin{align}
	\phi_{\text{Morse}}(x) =\left( -C_A\exp(-|x|/\ell_A)+C_R\exp(-|x|/\ell_R)\right)\exp(-\frac{|x|^2}{4\tau^{2}})/\sqrt{4\pi\tau^2}\;\label{eq_morse}
\end{align}
with $C_A= 0.5,\ell_A = 0.5, C_R = 0.2, \ell_2=0.4$ and
the truncated Topaz potential~\cite{topaz2006nonlocal}
\begin{align}
	\phi_{\text{Topaz}}(x) =\left(1+|x|\right)^{-a}\exp(-\frac{|x|^2}{4\tau^{2}})/\sqrt{4\pi\tau^2}\;\label{eq_topaz}
\end{align}
with $a=-0.1$ and $\tau=0.1$. The graph of these two potentials are shown in Figure~\ref{exp2} (a) and (c) in black, respectively. In both examples, $1\%$ Gaussian noise is added to the clean data generated with $\Delta x = 0.1$ and $\Delta t =0.1$.  We use $T=1$ for the Morse potential, and $T=0.6$ for the Topaz potential. The parameters are set as $\alpha=1\times10^{-5}, \beta=1\times 10^{-7}, \gamma = 10$. We present the identification results for the Morse potential in Figure~\ref{exp2} (a) and (b), and the results for the Topaz potential in Figure~\ref{exp2} (c) and (d). The identified potential in (a) has error $e_\phi=12.73\%$, the error in (c) is $e_\phi=7.59\%$. %For both examples, the identified potential is close to the exact potential with small errors.
%These results justify that our proposed potential identification method is applicable to different types of potentials, and it is robust against noise.
\begin{figure}[h!]
	\begin{tabular}{cccc}%{c@{\hspace{2pt}}c@{\hspace{2pt}}c@{\hspace{2pt}}c}
		(a)&(b)&(c)&(d)\\
		\includegraphics[trim={0.5cm 0 1.8cm 0},clip,width=0.21\textwidth]{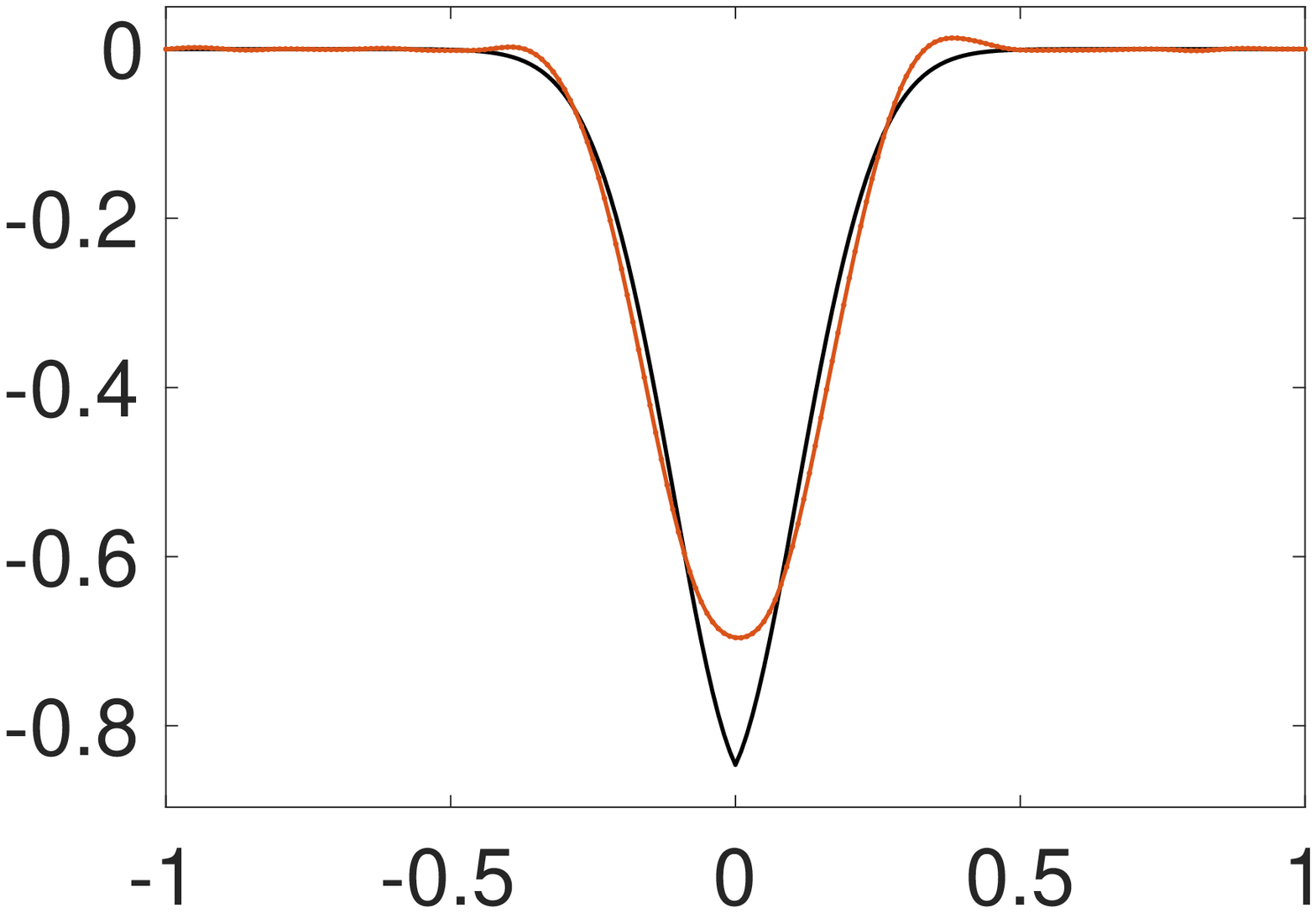}\hspace{0.3cm}
		&
		\includegraphics[trim={0.1cm 0 1.8cm 0},clip,width=0.21\textwidth]{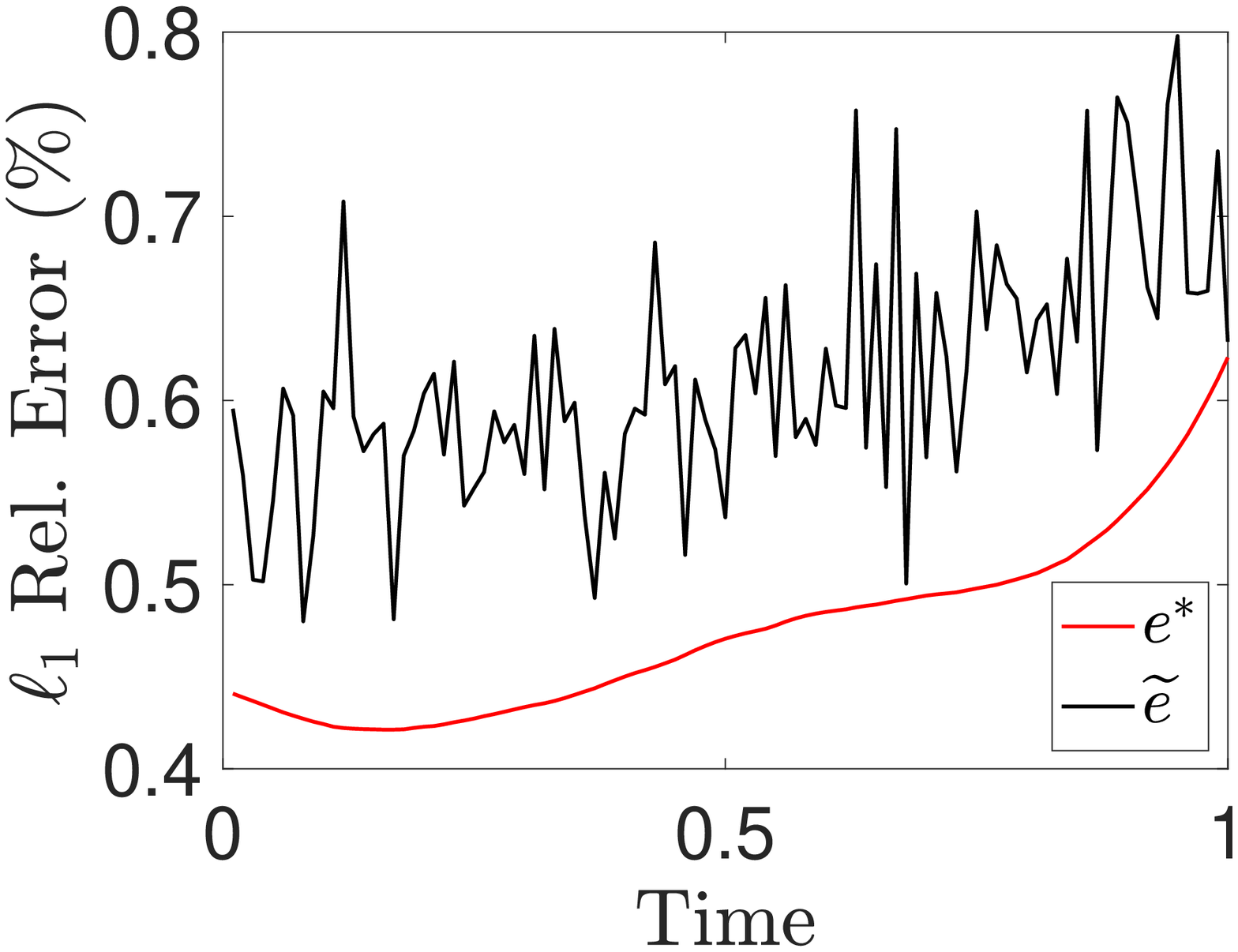}\hspace{0.3cm} &
		\includegraphics[trim={1.7cm 0 1.8cm 0},clip,width=0.21\textwidth]{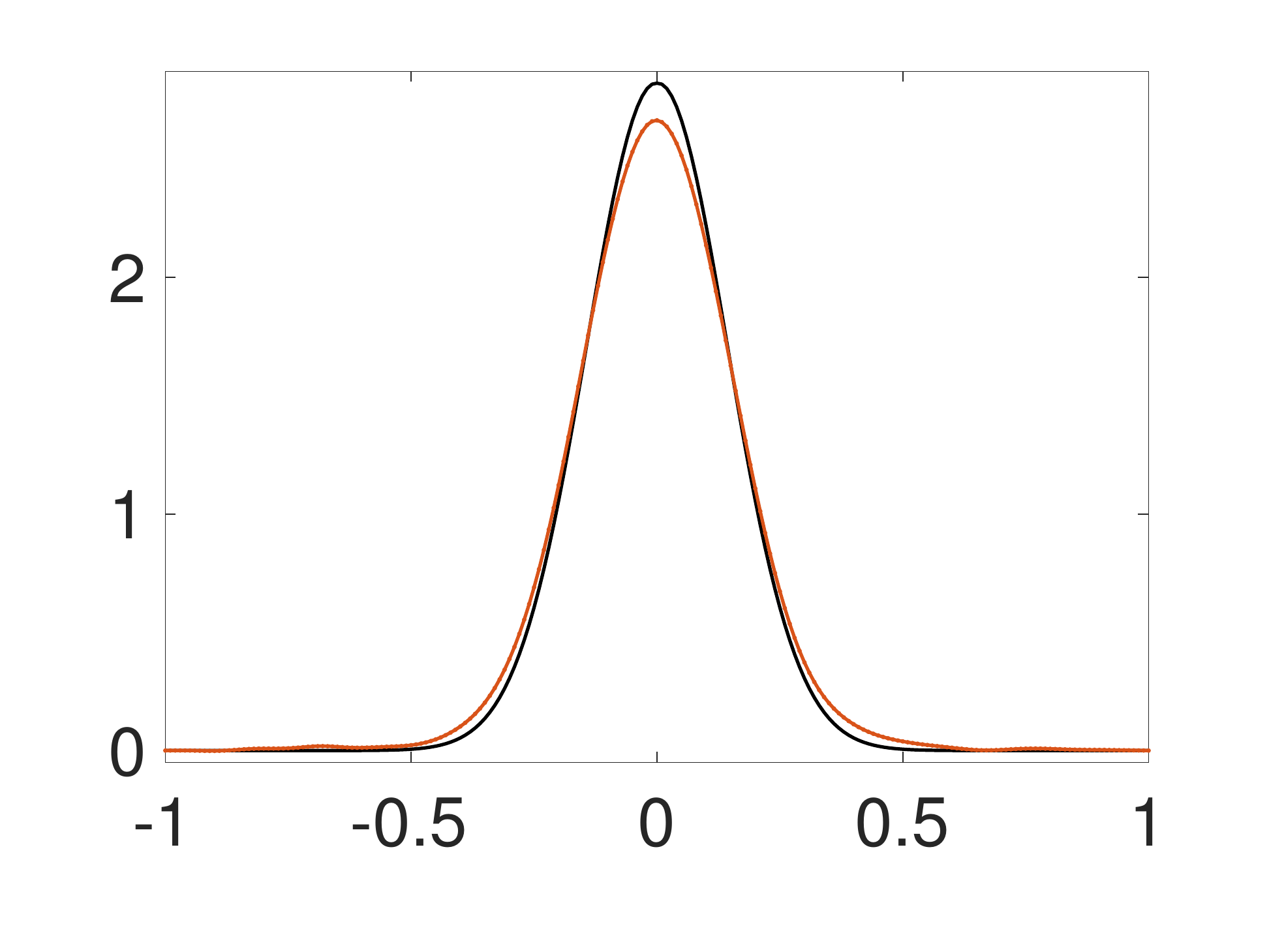}\hspace{0.3cm} &
		\includegraphics[trim={0.1cm 0 1.2cm 0},clip,width=0.21\textwidth]{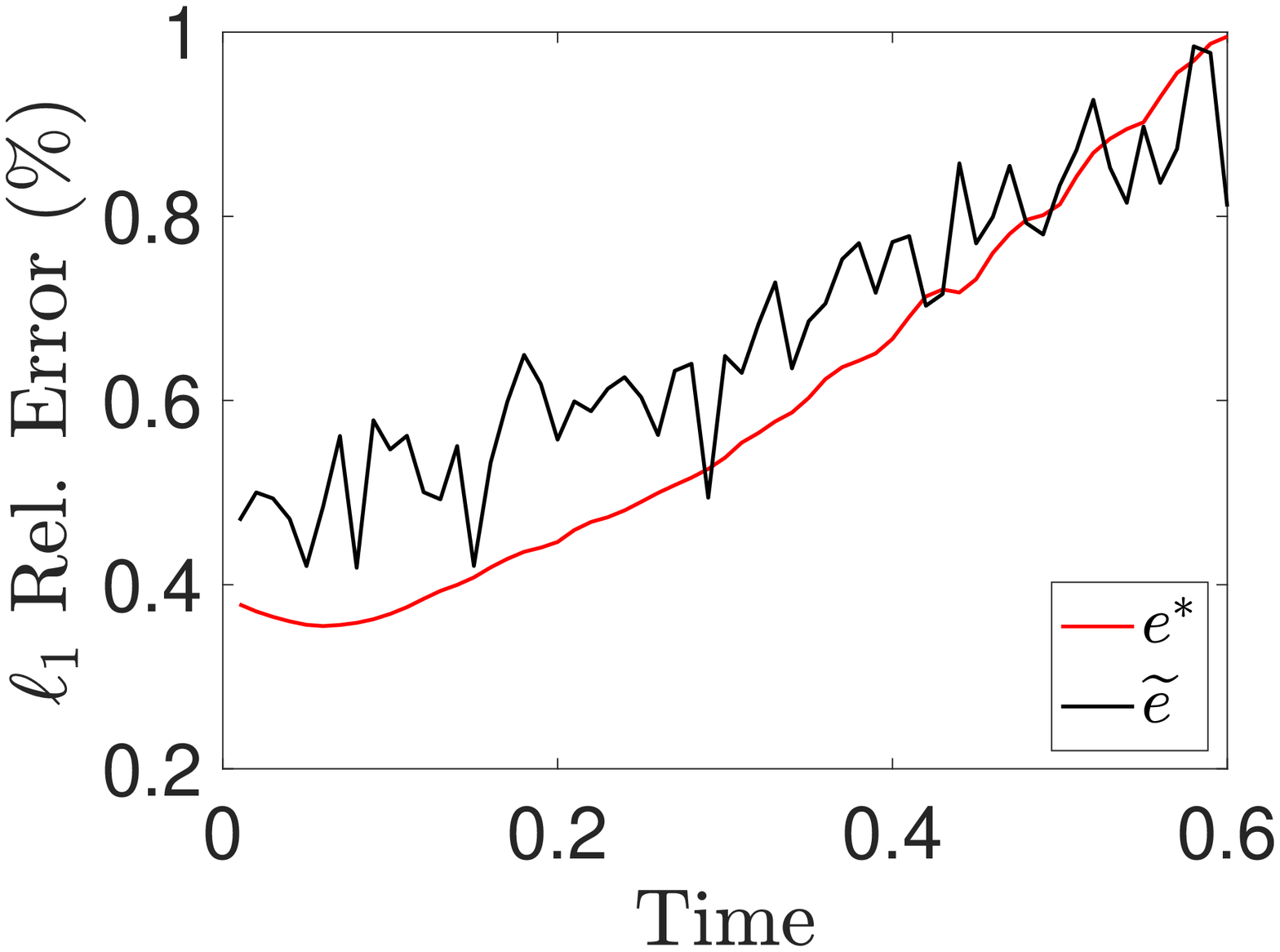}
	\end{tabular}
	\caption{1D example of the Morse potential $\phi_{\text{Morse}}$ in~\eqref{eq_morse} and the Topaz potential $\phi_{\text{Topaz}}$ in~\eqref{eq_topaz} . (a) The graph of $\phi_{\text{Morse}}$ (black) and the identified $\widehat{\phi}$ (red) with error $e_\phi=12.73\%$.  (b) The errors $e^*(t)$ (red) and $\widetilde{e}(t)$ (black) for the identified potential in (a). (c) The graph of $\phi_{\text{Topaz}}$ (black) and the identified $\widehat{\phi}$ (red) with error $e_\phi=7.59\%$. (d) The errors $e^*(t)$ (red) and $\widetilde{e}(t)$ (black) for the identified potential in (c).  The given data contain  $1\%$ Gaussian noise. We apply SDD and the adaptive support scheme with $\gamma = 10$.
	}
	\label{exp2}
\end{figure}

\subsection{Two-dimensional potential identification}
We next experiment on two-dimensional potentials without the adaptive support scheme, i.e., $\gamma=0$. Our computational domain is $[-1,1]^2$ with $\Delta x_1=\Delta x_2=1/15$. The clean data are generated by solving (\ref{eq.aggregation}) with $\Delta t=0.02, T=4$ and the initial condition:
\begin{align*}
	u_0(x_1,x_2)=\exp\left(-\frac{x_1^2+(x_2+0.3)^2}{0.2^2}\right)+ \exp\left(-\frac{x_1^2+(x_2-0.3)^2}{0.2^2}\right).
\end{align*}
Then $1\%$ Gaussian noise is added to generate noisy data. %For all experiments in this subsection, $\lambda =2$ is used.

Figure~\ref{fig.RINO.2d.ar} shows the identification result of  the attraction-repulsion potential
\begin{align}
	\phi^*=10\left((x_1^2+x_2^2)^{0.55}/1.1-(x_1^2+x_2^2)^{0.5}\right)\exp\left(-(x_1^2+x_2^2)^{0.5}/0.1\right).
	\label{eq.numerical.2D.ar}
\end{align}
In this experiment, we set $\alpha=2\times10^{-4},\beta=2\times10^{-7}$ and $\lambda=2$. The exact potential $\phi^*$ and the identified potential $\widehat{\phi}$ are shown in Figure \ref{fig.RINO.2d.ar} (a) and (b), respectively. We compare their cross sections along $x_1=0$ and $x_2=0$ in (c) and (d), respectively. We observe that $\widehat{\phi}$ recovers the attraction-repulsion property and well approximates  $\phi^*$. The errors $e^*(t)$ and $\widetilde{e} (t)$ are shown in Figure \ref{fig.RINO.2d.ar} (e). As the solution evolves from $t=0$ to 4, $e^*(t)$ is always below $2.5\%$.
\begin{figure}[t!]
	\begin{tabular}{ccc}
		(a)&(b)&\\
		\includegraphics[width=0.32\textwidth]{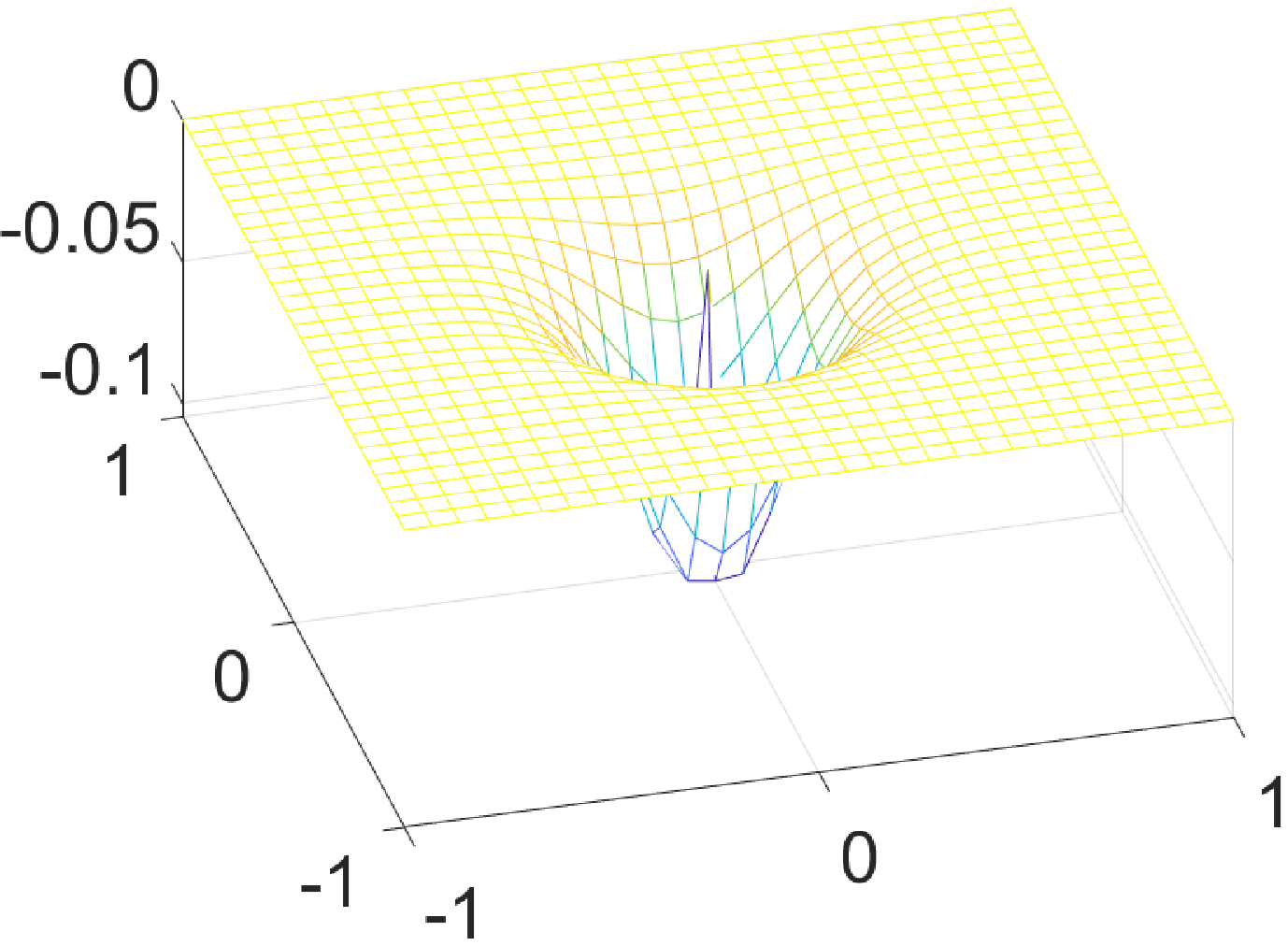}&
		\includegraphics[width=0.3\textwidth]{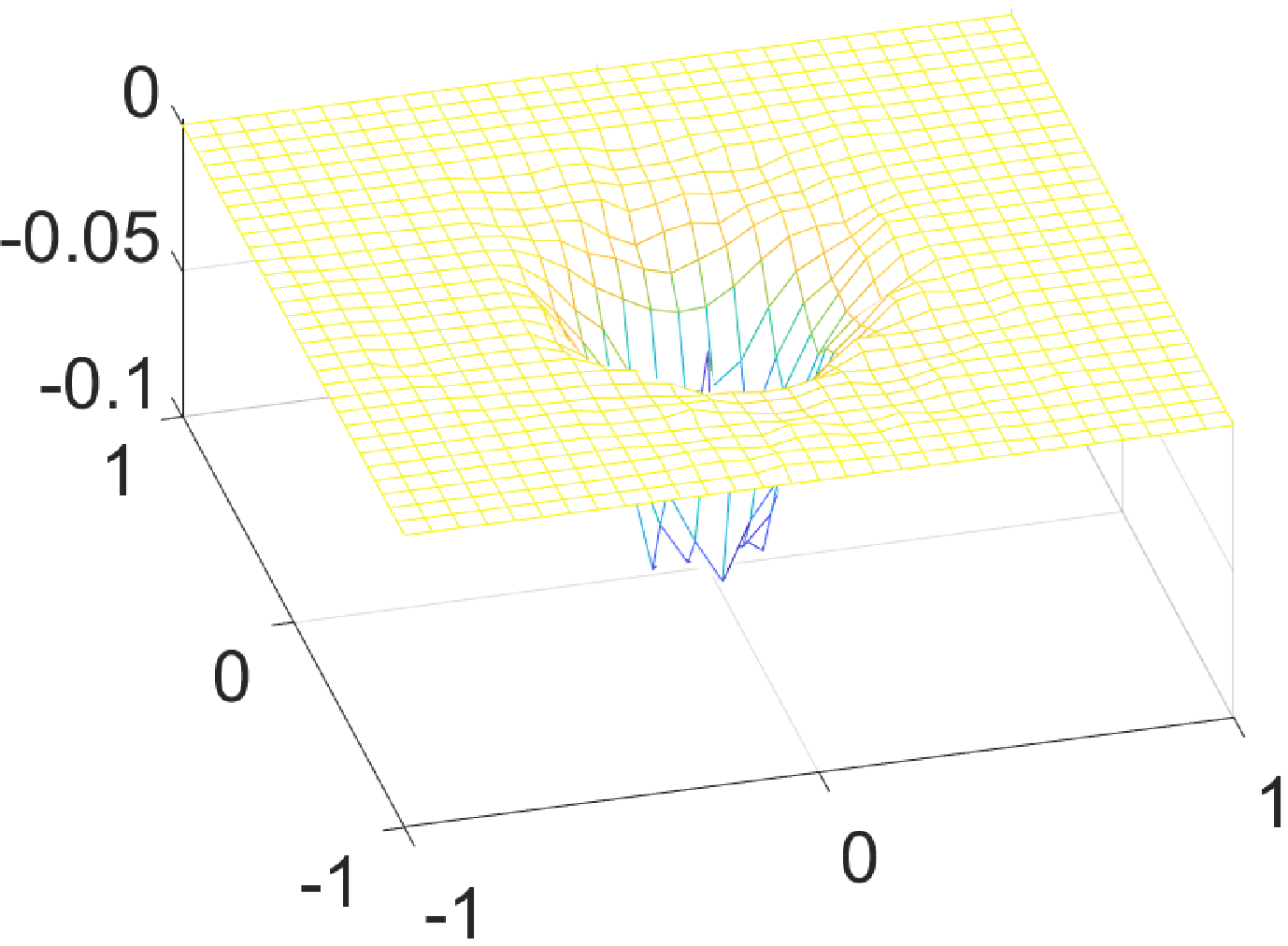} &\\
		(c)&(d)&(e)\\
		\includegraphics[width=0.32\textwidth]{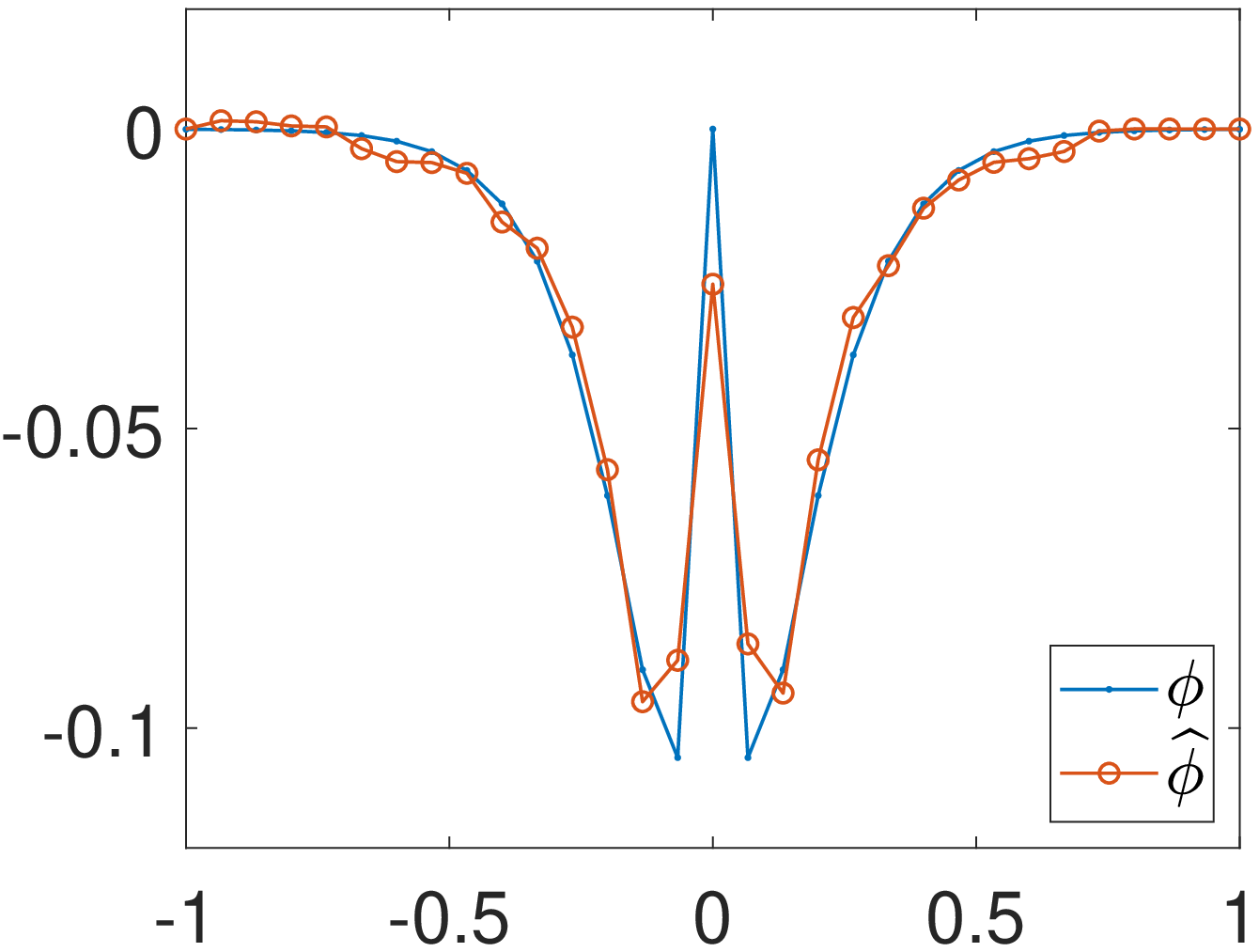}&
		\includegraphics[width=0.32\textwidth]{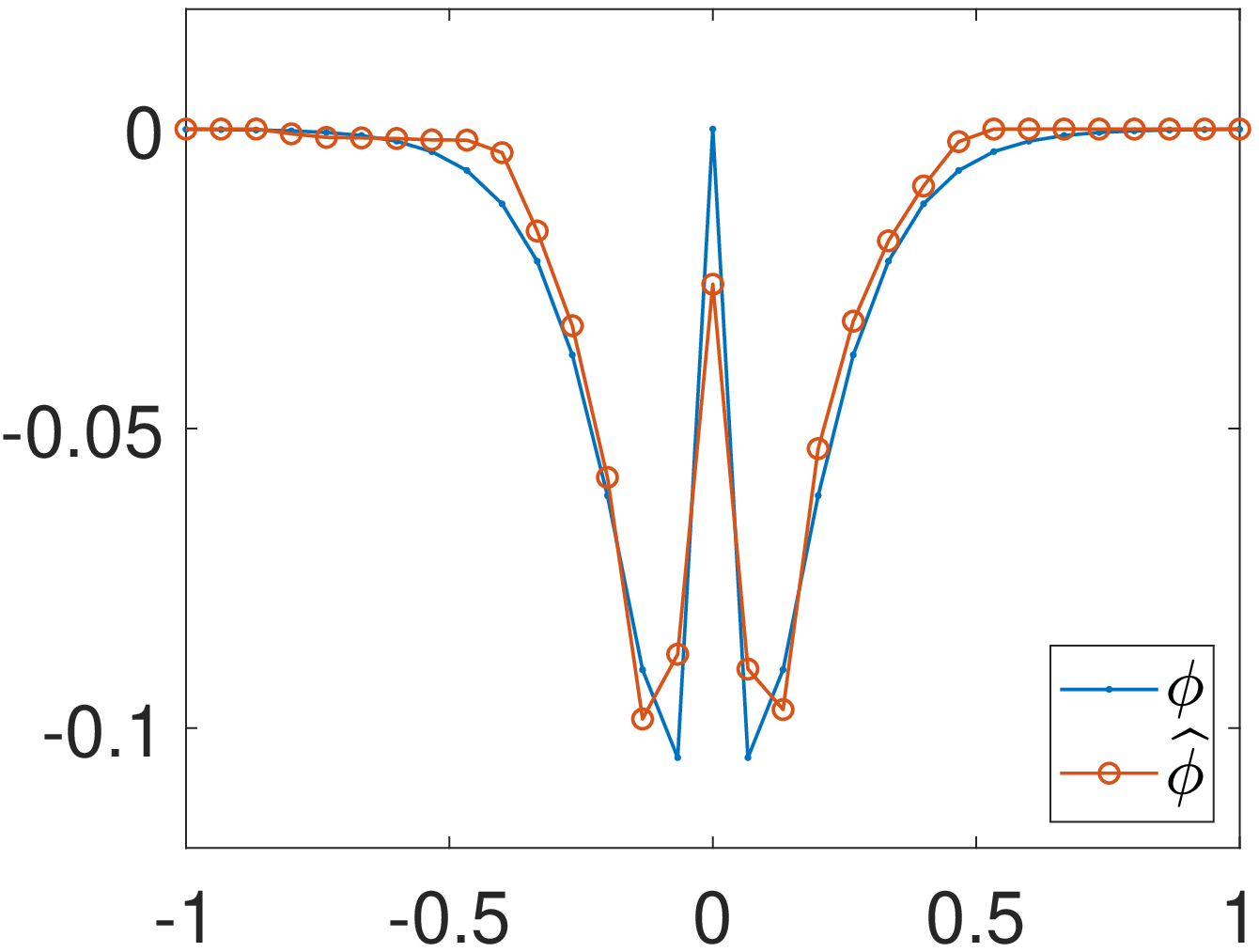} &
		\includegraphics[width=0.32\textwidth]{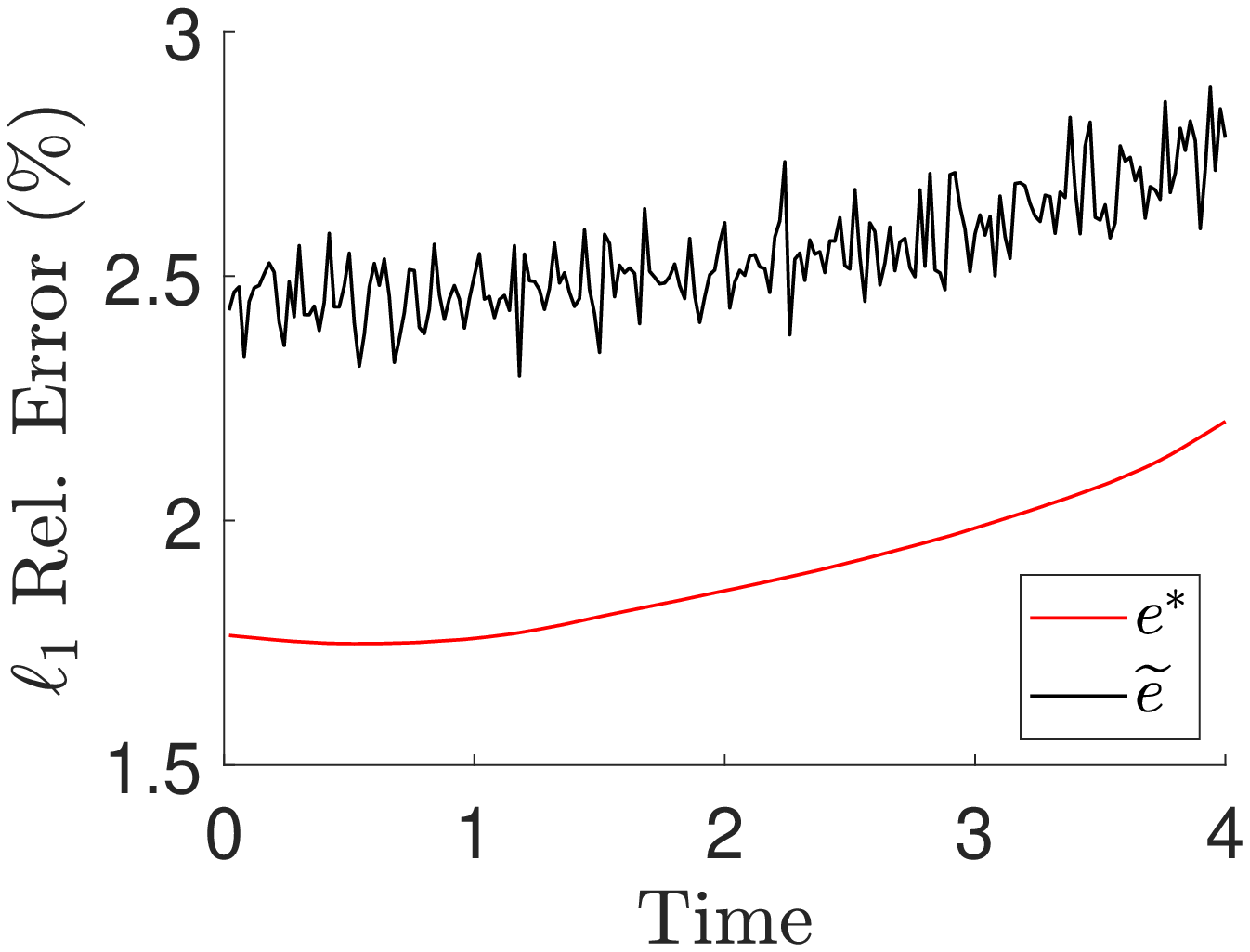}
	\end{tabular}
	\caption{2D example of the attraction-repulsion potential $\phi^*$ in (\ref{eq.numerical.2D.ar}). (a) The true potential $\phi^*$. (b) The identified potential $\widehat{\phi}$ with error  $e_\phi=18.44\%$. (c) Comparison of the cross sections along $x_1=0$. (d) Comparison of the cross sections along $x_2=0$. (e) The errors $e^*(t)$ (red) and $\tilde{e}(t)$ (black) as functions of $t$. The given data contain $1\%$ noise and we set $\alpha=2\times10^{-4},\beta=2\times10^{-7},\lambda=2$ and $\gamma=0$.}\label{fig.RINO.2d.ar}
\end{figure}

Figure \ref{fig.RINO.2d.aniso} shows the identification result of the following anisotropic potential
\begin{align}
	\phi^*=\frac{1}{5}\exp\left(-\frac{x_1^2+3x_2^2}{0.04}\right).
	\label{eq.numerical.2D.aniso}
\end{align}
In the experiment, we set $\alpha=2\times10^{-4},\beta=2\times10^{-7}$ and $\lambda=2$. The exact potential $\phi^*$ and the identified $\widehat{\phi}$ are shown in Figure \ref{fig.RINO.2d.aniso} (a) and (b), respectively. We compare their cross sections along $x_1=0$ and $x_2=0$ in (c) and (d), respectively. We observe that $\widehat{\phi}$ recovers the anisotropic property of $\phi^*$. Figure \ref{fig.RINO.2d.aniso} (e) shows the error $e^*(t)$ which is very small (less than $1.5\%$ for $t\in[0,4]$).
\begin{figure}[t!]
	\begin{tabular}{ccc}
		(a)&(b)&\\
		\includegraphics[width=0.32\textwidth]{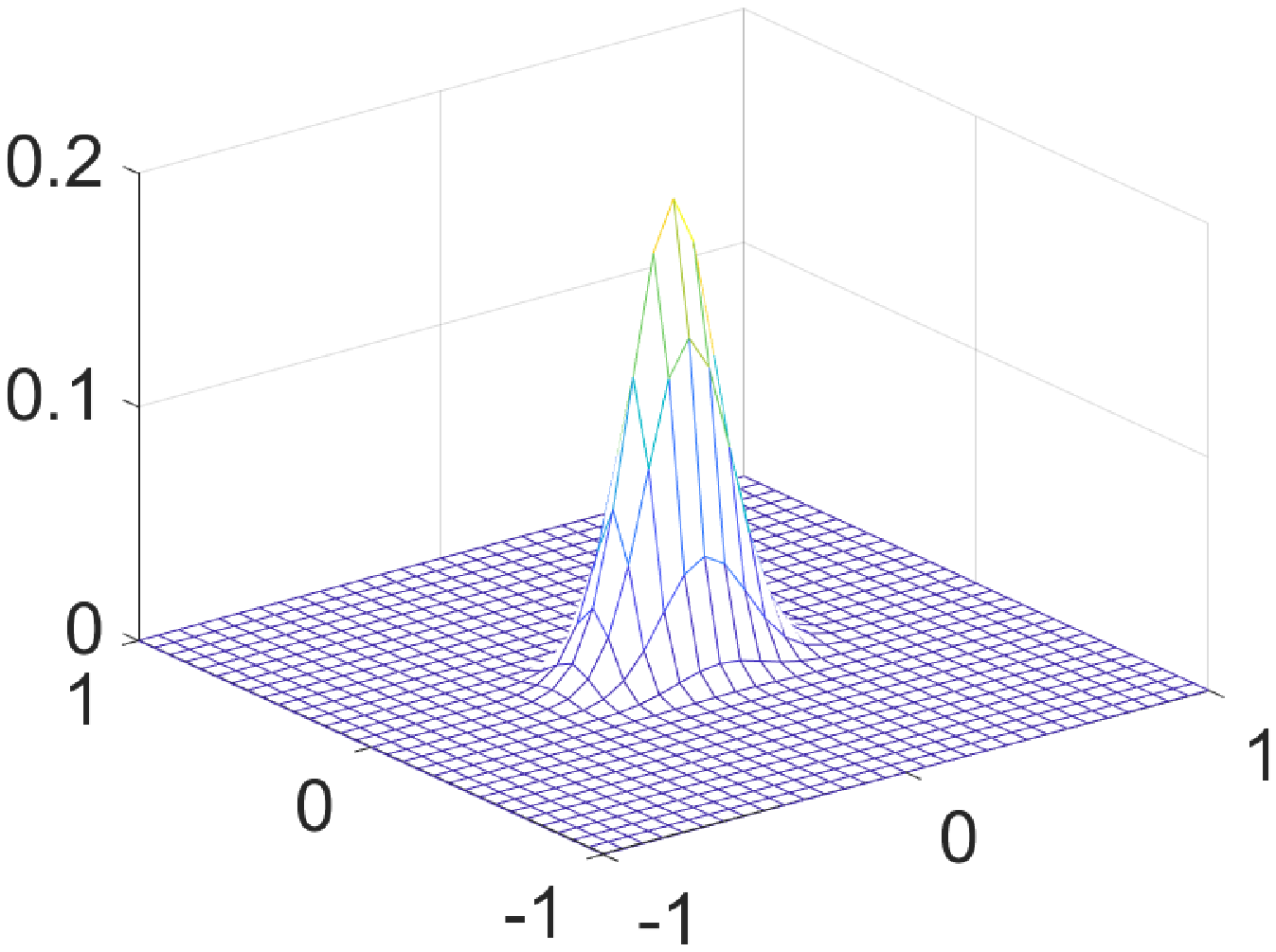}&
		\includegraphics[width=0.32\textwidth]{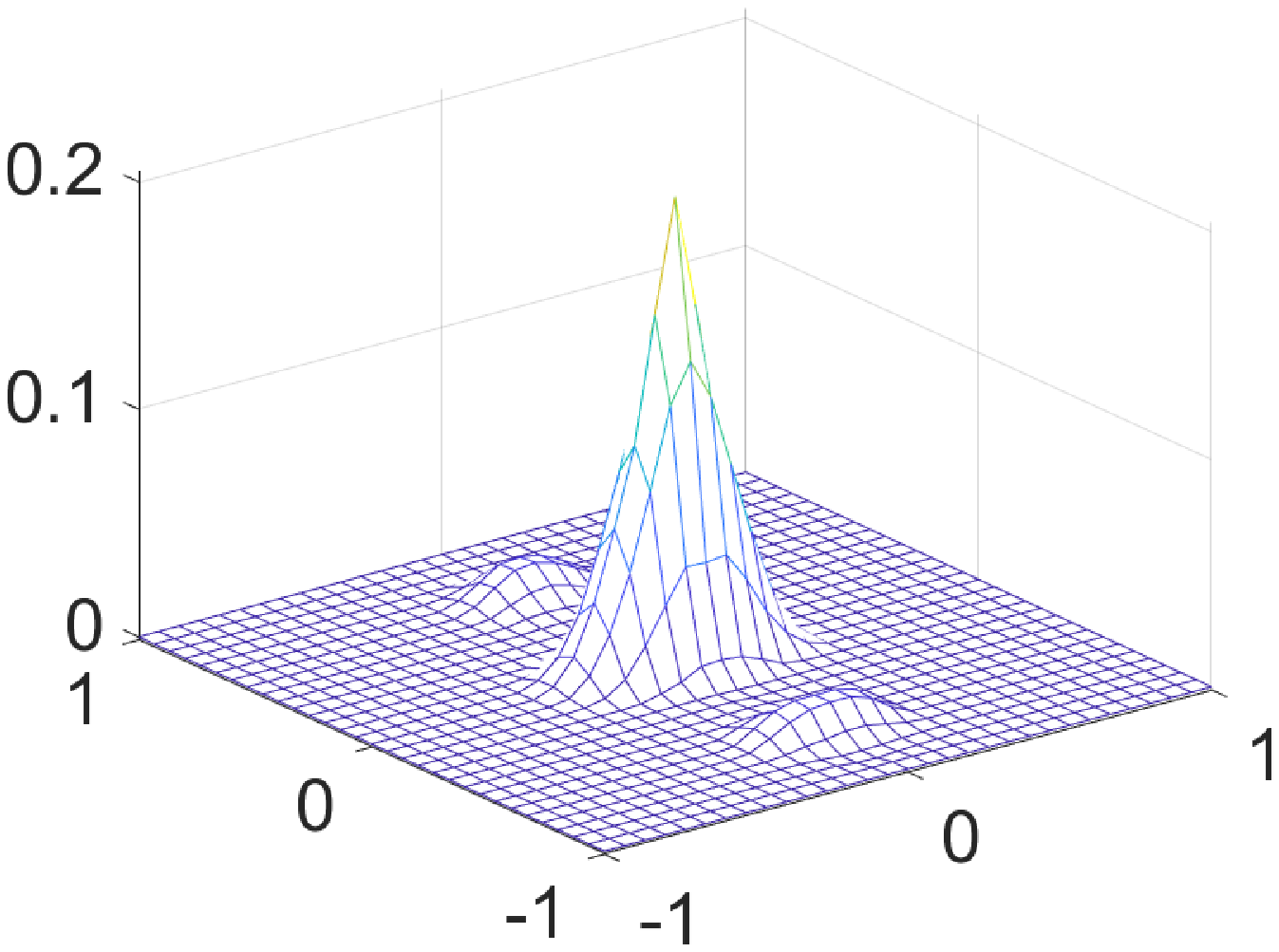} &\\
		(c)&(d)&(e)\\
		\includegraphics[width=0.32\textwidth]{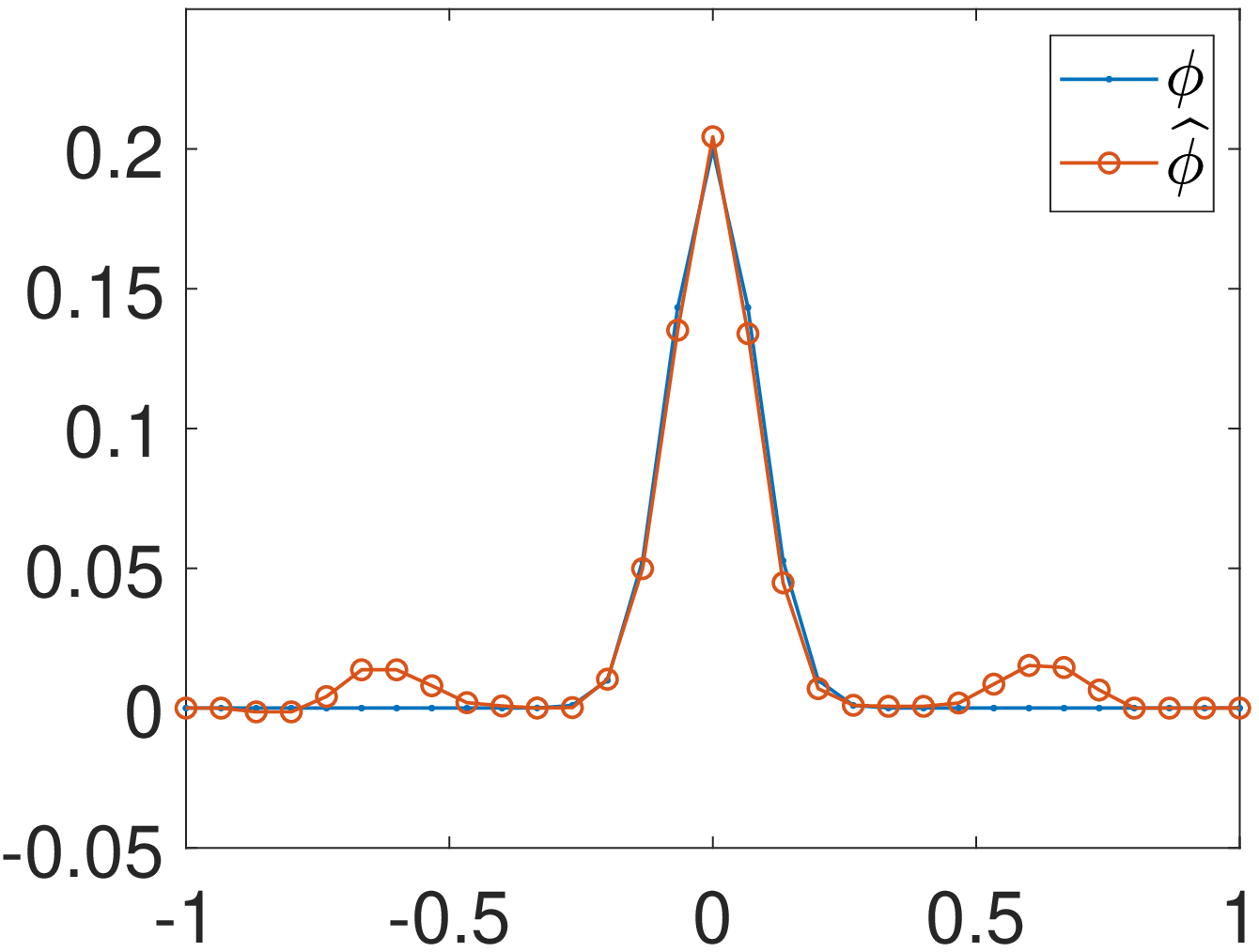}&
		\includegraphics[width=0.32\textwidth]{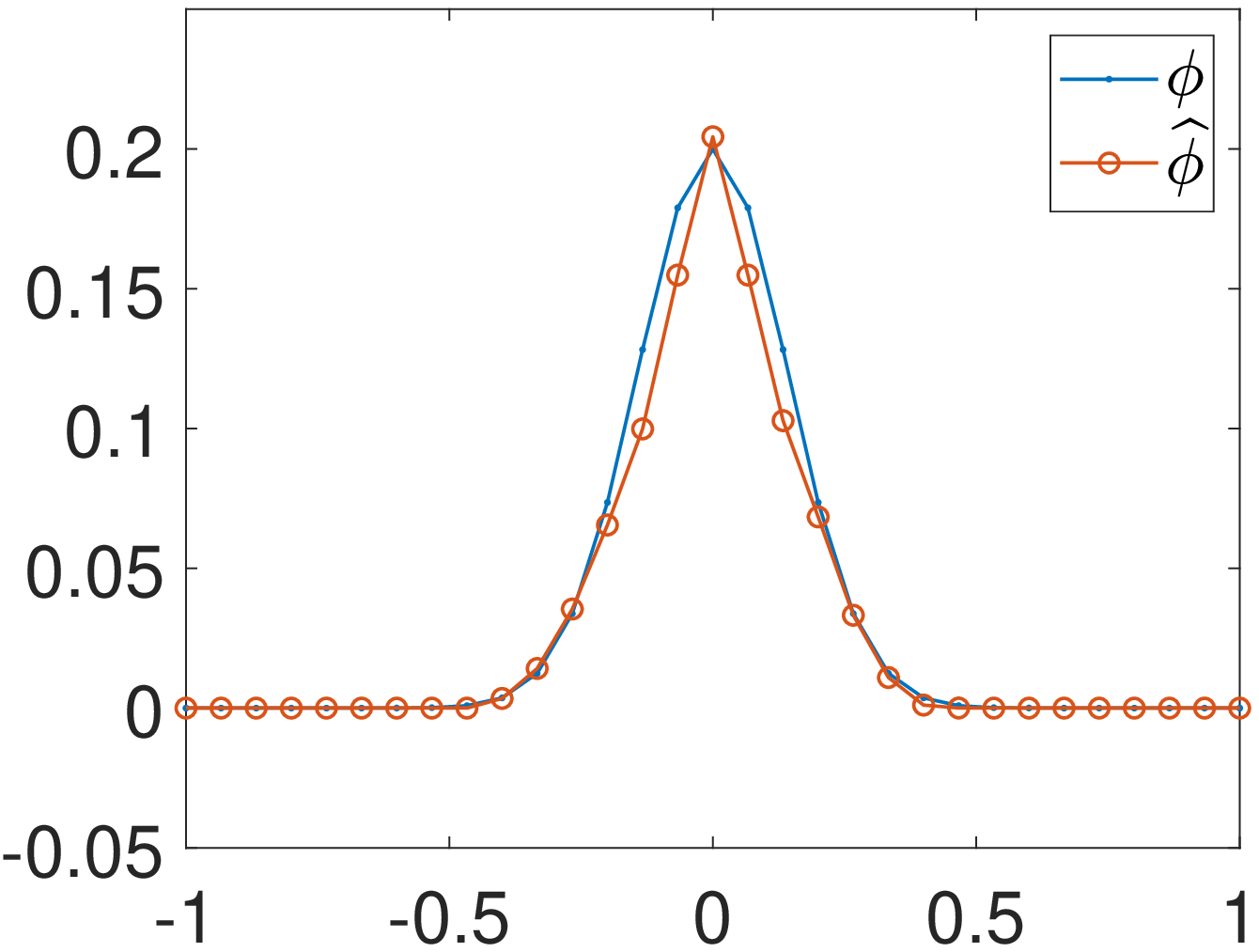} &
		\includegraphics[width=0.32\textwidth]{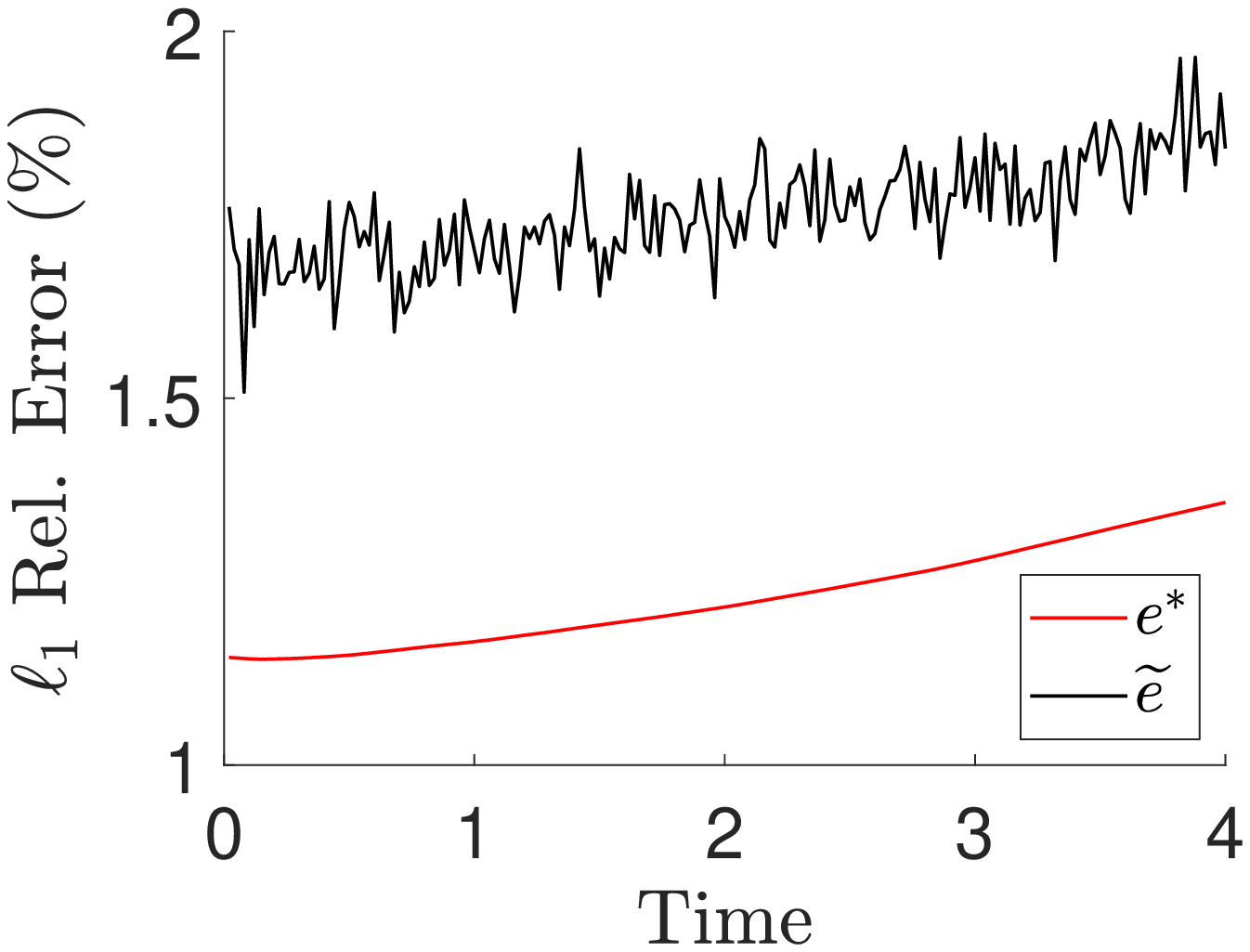}
	\end{tabular}
	\caption{2D example of the the anisotropic potential $\phi^*$ in (\ref{eq.numerical.2D.aniso}). (a) The true potential $\phi^*$. (b) The identified potential $\widehat{\phi}$ with error $e_\phi=22.87\%$. (c) Comparison of the cross sections along $x_1=0$. (d) Comparison of the cross sections along $x_2=0$. (e) The errors $e^*(t)$ (red) and $\tilde{e}(t)$ (black) as functions of $t$. The given data contain $1\%$ noise and we set $\alpha=2\times10^{-4},\beta=2\times10^{-7},\lambda=2$ and $\gamma=0$.
	}\label{fig.RINO.2d.aniso}
\end{figure}

\subsection{Symmetric potential  example}
We next demonstrate the effects of imposing the symmetry constraint on potentials as described in Section~\ref{sec_symm}. We consider the potential in (\ref{eq_RA}), with which the clean data is computed by solving  (\ref{eq.aggregation}) with $\Delta x=0.01, \Delta t=0.01$, and $T=3$. The noisy data are generated by adding $5\%$ Gaussian noise to the clean data.  We set $\alpha=1\times10^{-3}, \beta=1\times10^{-6},\gamma=20$ in Algorithm \ref{alg2} with the symmetry constraint, and use $\alpha=1\times10^{-3},\beta=5\times10^{-6},\gamma=10$ in Algorithm \ref{alg2} without the symmetry constraint. Figure~\ref{fig_SRINO} (a) compares the potential identified with (red) and without (blue) the symmetry constraint. The identified potential with the symmetry constraint approximates the exact potential better than that without the constraint. Such a constraint provides additional regularization which averages the noise on the negative axis and the positive axis. The error $e^*(t)$ is shown in Figure~\ref{fig_SRINO} (b).

%We next demonstrate the effects of imposing symmetry to the potential (\ref{eq_RA}) as described in Section~\ref{sec_symm}. \textcolor{red}{Which potential is experimented on?} The clean data is generated by solving  with $\Delta x=0.01, \Delta t=0.01$, and $T=3$, and $5\%$ Gaussian noise is added.  Figure~\ref{fig_SRINO} (a) compares the potential identified without the symmetry constraint (blue curve) with that with the symmetry constraint(red curve). The heavy noise tampers the symmetric structure of the identified potential.  By imposing the symmetry constraint, we enforce a geometric constraint on the identified potential, making the identified potential closer to the underlying truth. More importantly, such symmetry condition provides a stronger regularization by averaging the noise on the negative axis and the positive axis. The comparison of $e^*$ values is shown in Figure~\ref{fig_SRINO} (b). \textcolor{red}{Modify here according to the caption of Figure 5} %Consequently, the identification is more robust, and the simulation error decreases, as shown in~(b).
\begin{figure}[h!]
	\begin{center}
		\begin{tabular}{cc}
			(a)&(b)\\
			\includegraphics[width=0.4\textwidth]{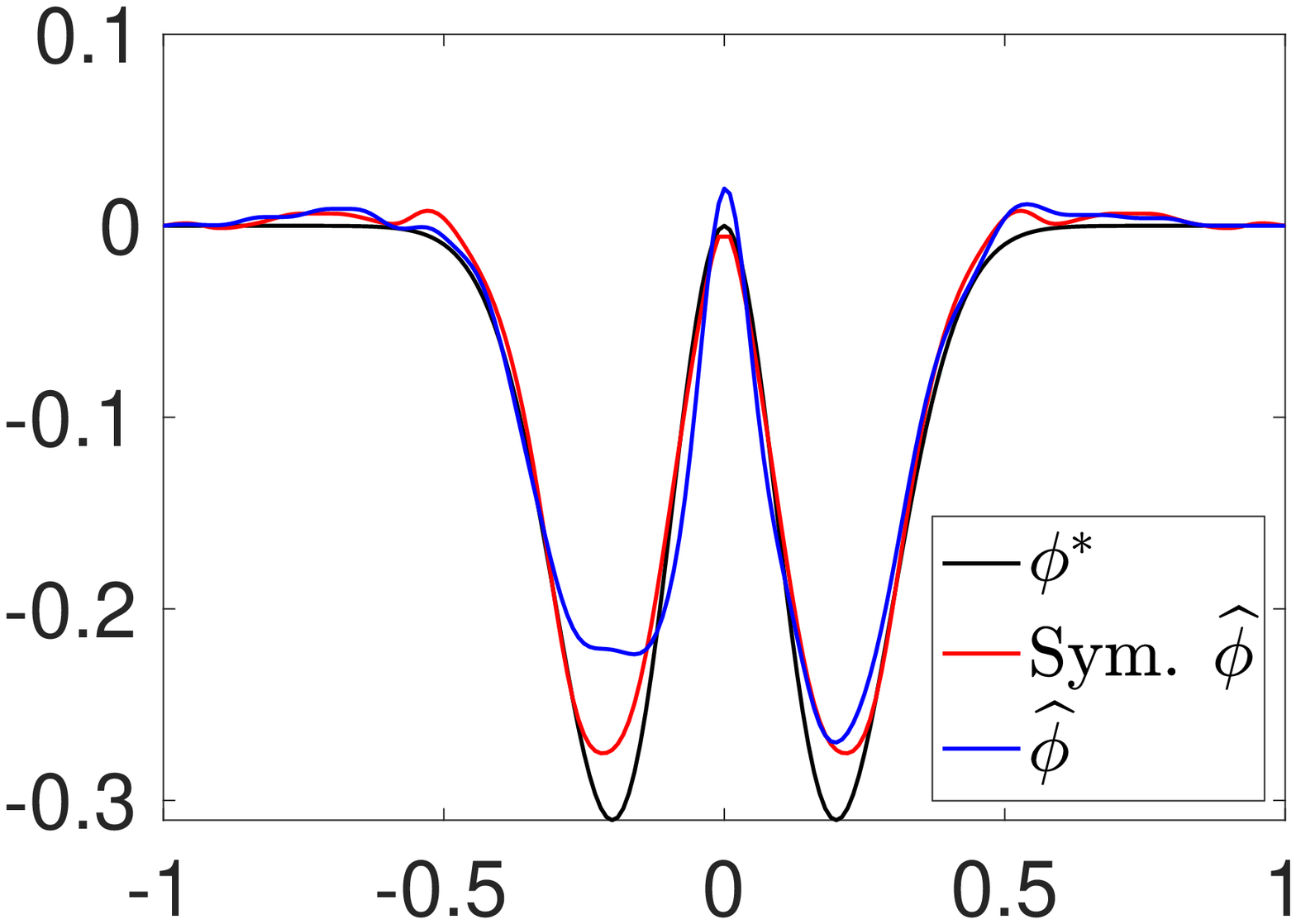}&
			\includegraphics[width=0.4\textwidth]{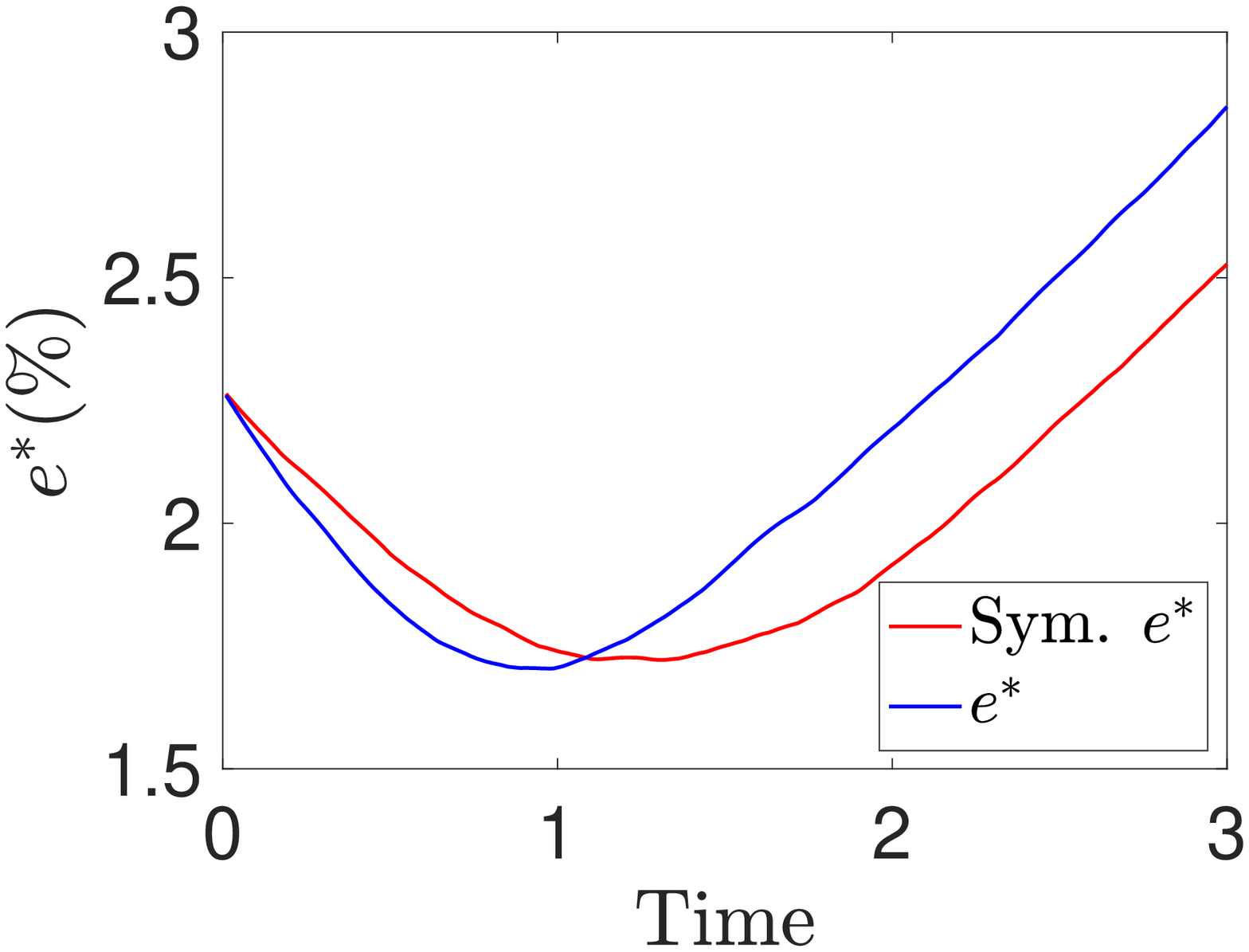}
		\end{tabular}
	\end{center}
	\caption{Effect of imposing the symmetry constraint for the identification of the repulsive-attractive potential in (\ref{eq_RA}). (a) The identified potential with (red) and without (blue) the symmetry constraint, compared to the true potential (black).  (b) The errors $e^*(t)$ as a function of $t$. The given data contain $5\%$ Gaussian noise. We set $\alpha=1\times10^{-3}, \beta=1\times10^{-6}$,  $\gamma=20$ for Algorithm \ref{alg2} with the symmetry constraint.  Without the symmetry constraint, we use $\alpha=1\times10^{-3},\beta=5\times10^{-6}$ and $\gamma=10$. %The underlying dynamic is the same as that in Figure~\ref{exp1}.
		The symmetry constraint improves the identification result in comparison with Figure~\ref{exp1}. }\label{fig_SRINO}
\end{figure}

\subsection{Comparison of different regularization}
Regularization is important in stabilizing the potential identification from noisy data.
As discussed in Section~\ref{sec.model}, our choice of $|\nabla\phi|$ and $|\nabla^2\phi|^2$ is motivated by their physical meanings.
%we provide the motivation in physics of taking $|\nabla\phi|$ and $|\nabla^2\phi|^2$ as the regularization terms. This choice of regularization provides the best numerical stability and identification accuracy.
In this section, we justify our choice by numerical experiments.

We consider the following eight choices of regularizations, where $\alpha$ and $\beta$ represent the weight parameters as in (\ref{eq.min.reg}):
\begin{enumerate}%[{R}1.]
	\item $\alpha|\nabla\phi|$: The well-known TV regularization~\cite{chan1998total}, which tends to produce  piecewise constant recovery.
	\item $\frac{\alpha}{2}|\nabla\phi|^2$: The most classical squared $L^1$-norm regularization, which promotes smoothness in recovery.
	\item $\beta|\nabla^2\phi|$: The $L^1$-norm of the second order derivative, which has been explored in   \cite{chan2007image,bergounioux2010second} to reduce the staircase effect resulted from the TV regularization.
	\item $\frac{\beta}{2}|\nabla^2\phi|^2$: A second-order regularizer considered in nonlinear diffusion filters~\cite{didas2009properties}.
	\item $\alpha|\nabla\phi|+\beta|\nabla^2\phi|$: This combination contains the $L^1$-norms of the first and second order derivatives, which has been studied in~\cite{papafitsoros2014combined} for image deblurring and inpainting.
	\item $\alpha|\nabla\phi|+\beta|\nabla^2\phi|^2$: This is our proposed regularization in (\ref{eq.min.reg}).
	\item $\frac{\alpha}{2}|\nabla\phi|^2+\beta|\nabla^2\phi|$: This is a mixed-type regularizer which is not common in the literature. We include it for a comparison.
	\item $\frac{\alpha}{2}|\nabla\phi|^2+\frac{\beta}{2}|\nabla^2\phi|^2$: This combination contains the squared $L^1$-norms of the first and second order derivatives which strongly promotes smoothness.
\end{enumerate}

As for the choice of weight parameters, we test all weight parameters in the following lists:
\begin{align*}
	\alpha &\in \{10^{-6},5\times10^{-6},10^{-5},\dots,5\times 10^{-2}\}\\
	\beta &\in \{10^{-7},5\times10^{-7},10^{-6},\dots,5\times 10^{-3}\}
\end{align*}
and choose the one which minimizes the averaged $e^*$ error over time, i.e., $\frac{1}{N}\sum_{n=1}^Ne^*(t^n)$. We use the split Bregman algorithm whenever the $L^1$-type regularizer is used.  For all of the other cases,  such as the $L^2$ type,  the regularizers are smooth functions of $\phi$ that the proposed functional is minimized by solving a linear system.   All experiments are conducted with $\gamma=0$ and without the symmetry constraint. %The compact support constraint is not applied, i.e., $\gamma=0$.

%We have compared these regularizers in various settings. In this paper,
We consider the repulsive-attractive potential in \eqref{eq_RA} with four different choices of parameters, as shown in Figure \ref{fig_compare.potential} (a)-(d). In Figure~\ref{fig_compare.potential} (a) and (b), the potentials have singularities at the origin, corresponding to the condition that no collision occurs, which is commonly assumed in flock modeling~\cite{carrillo2014derivation}.

Figure \ref{fig_compare} (a) - (d) shows the identification results for the four potentials in Figure \ref{fig_compare.potential} (a) - (d) respectively. For each potential, we report the error $e^*(t)$ and its averaged value over time, when the eight choices of regularizations are used.
For potentials with singularities, the results in (a) and (b) show that the TV regularization is helpful in identifying such potentials.
When TV regularization is used, the  errors are reduced if TV is combined with $|\nabla^2\phi|$ or $|\nabla^2\phi|^2$.
%
%Observing the results from (a) to (d),   we find no single regularization that outperforms in all different cases, e.g., aggregation or repellent, compact support or not, symmetry or not, and various shapes of the potential kernel, especially with noisy data. {\color{blue} The previous sentence is to be modified: we did not use the compact support nor symmetry in experiments in Figure 6.}
This set of experiments shows that, the optimal choice of regularization depends on the underlying potential.    We pick the regularizer  $\alpha|\nabla\phi|+\beta|\nabla^2\phi|^2$, since it gives good results in general.
% which numerically justifies the choice of regularization for our model.
\begin{figure}[tb!]
	\centering
	\begin{tabular}{c@{\hspace{2pt}}c@{\hspace{2pt}}c@{\hspace{2pt}}c}
		(a)&(b)&(c)&(d)\\
		\includegraphics[width=0.24\textwidth]{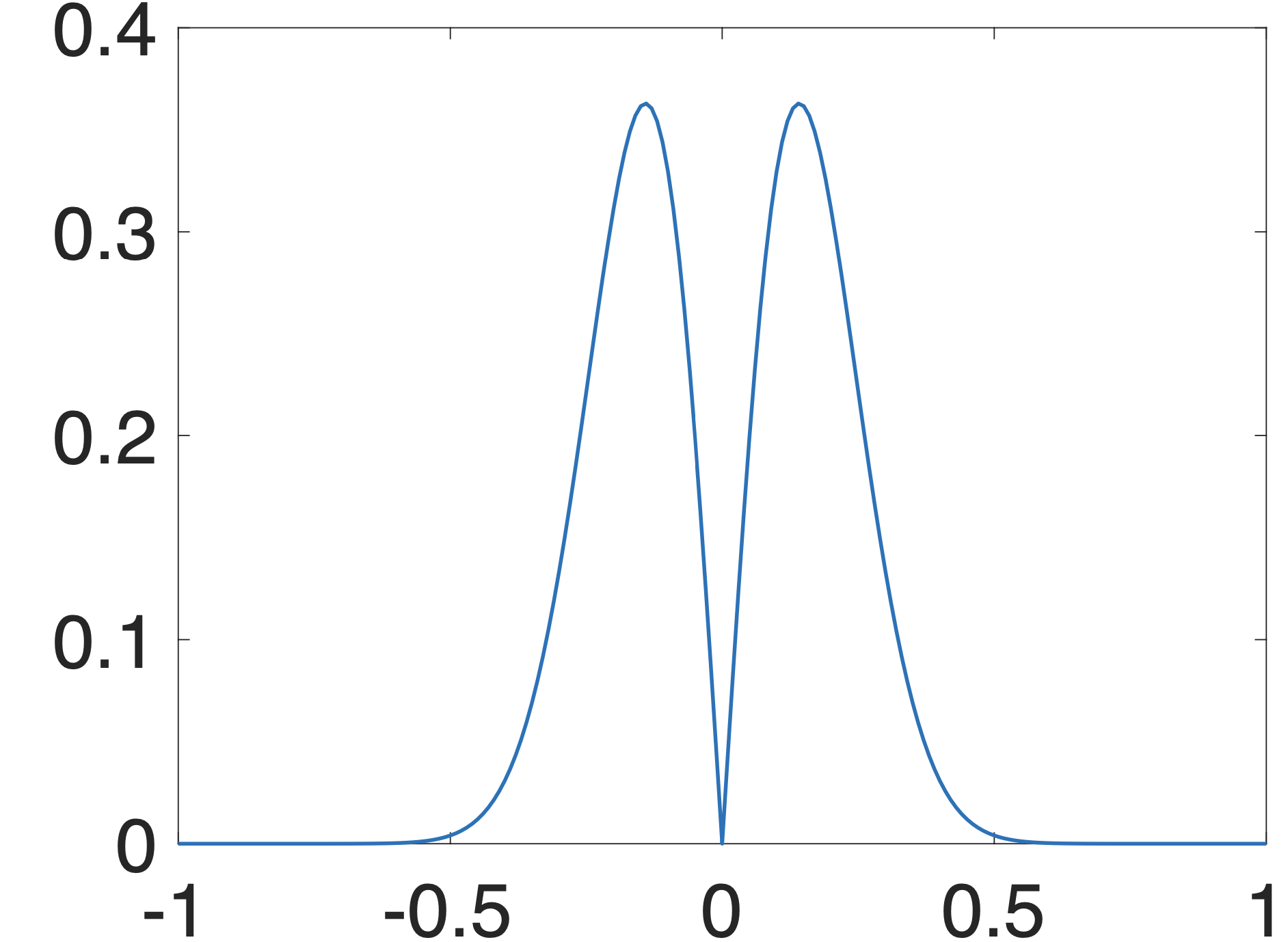}&
		\includegraphics[width=0.24\textwidth]{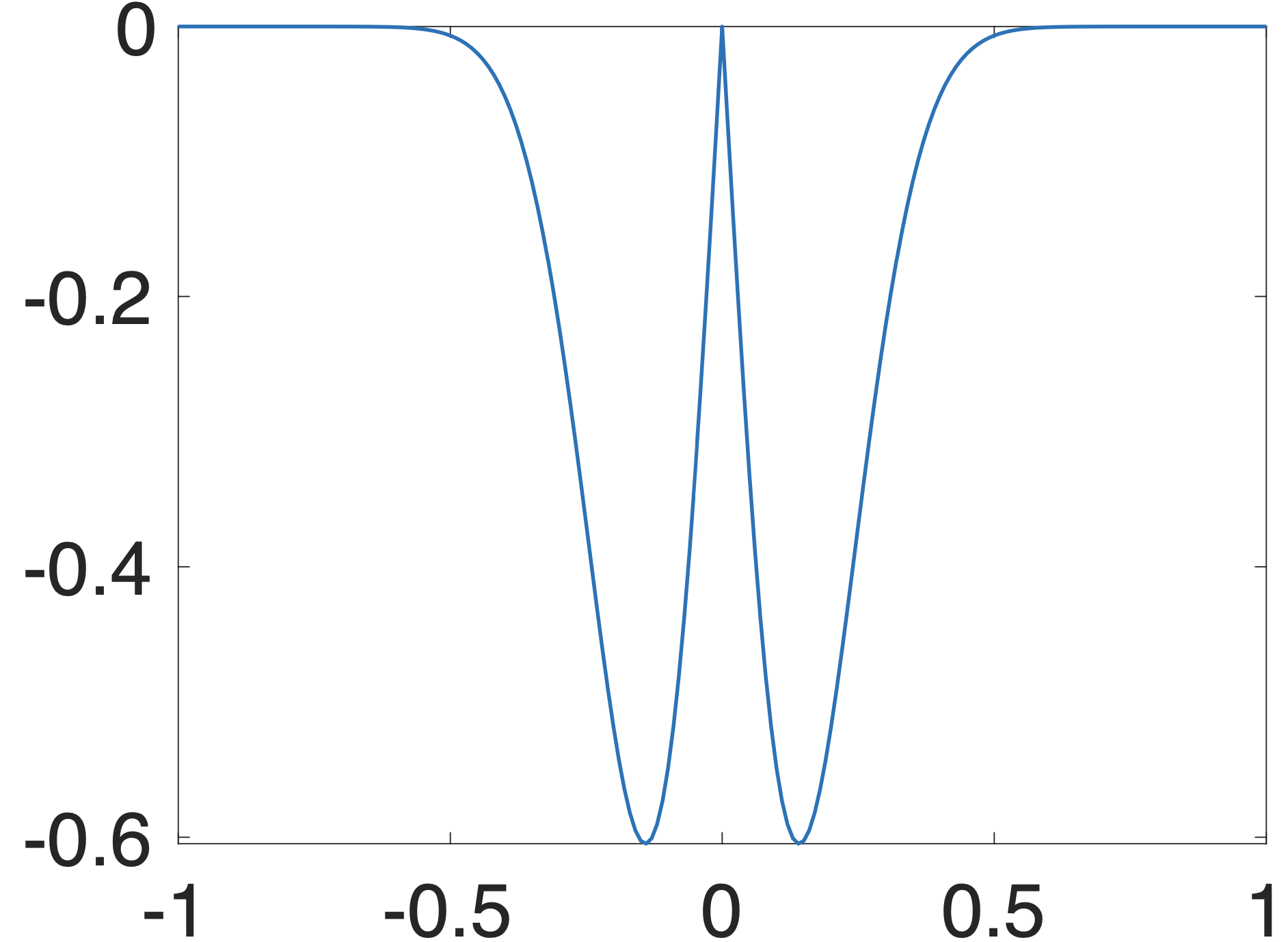}&
		\includegraphics[width=0.24\textwidth]{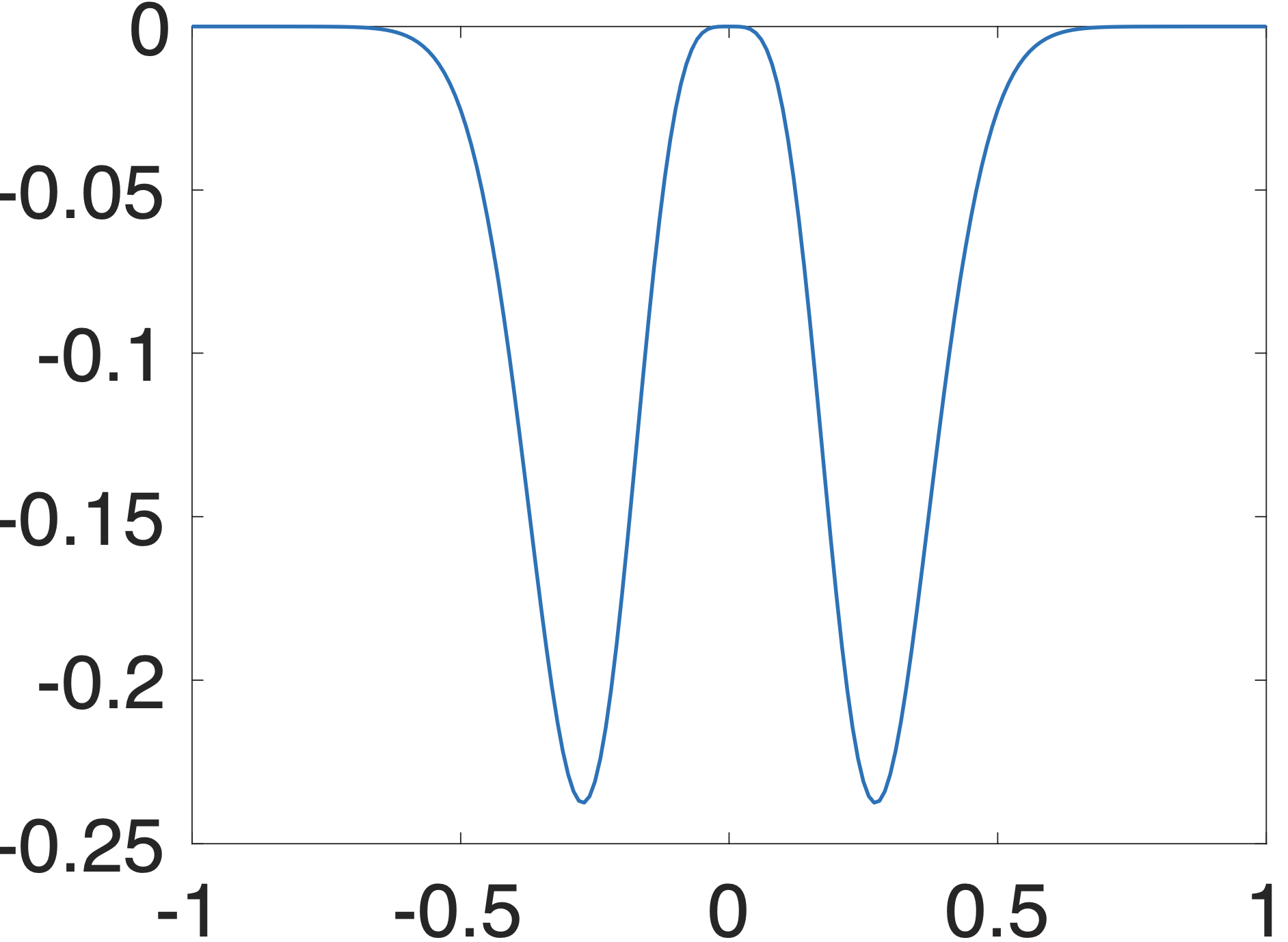}&
		\includegraphics[width=0.24\textwidth]{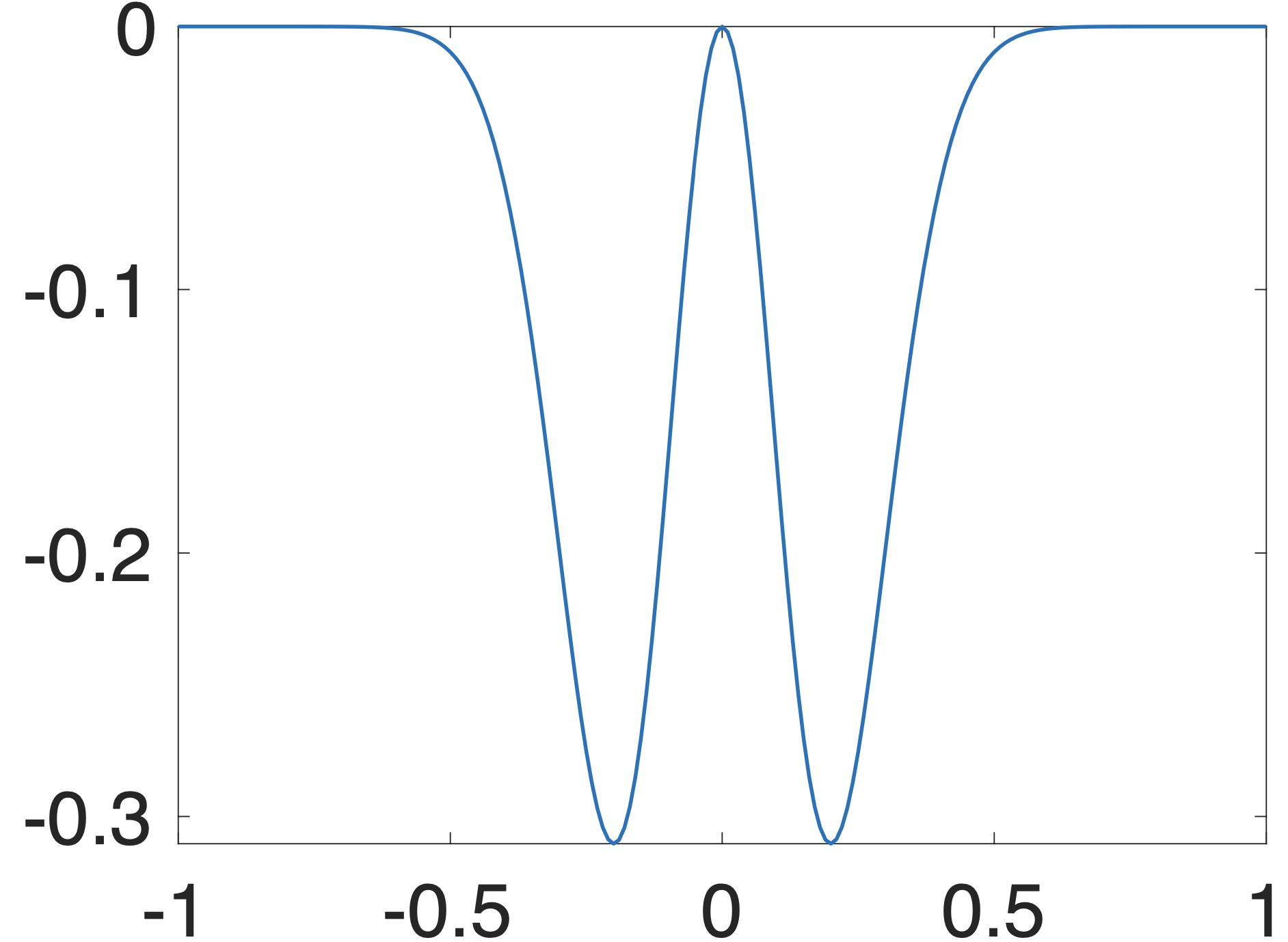}
	\end{tabular}
	\caption{Repulsive-attractive potentials in \eqref{eq_RA} with different parameters: (a) $\phi_{RA}(x;1,5,1.5)$, (b) $\phi_{RA}(x;5,1,2.5)$, (c) $\phi_{RA}(x;5,4,500)$, and (d) $\phi_{RA}(x;5,2,15)$.
	}\label{fig_compare.potential}
\end{figure}

\begin{figure}[t!]
	\centering
	\begin{tabular}{cc}
		(a)&(b)\\
		Averaged $e^* (\%)$ of Figure \ref{fig_compare.potential}(a)&Averaged $e^* (\%)$ of Figure \ref{fig_compare.potential}(b)\\
		\begin{tabular}{c|c| c|c|c}
			\hline
			&$|\nabla\phi|$&$|\nabla^2\phi|$&$|\nabla\phi|^2$& $|\nabla^2\phi|^2$\\\hline
			$|\nabla\phi|$&$2.42$ &$\mathbf{1.22}$&--- &$  2.00$\\\hline
			$|\nabla^2\phi|$& --- & $1.29$& $1.29$&---\\\hline
			$|\nabla\phi|^2$&--- & ---&$3.02$&$2.47$\\\hline
			$|\nabla^2\phi|^2$&--- & ---&--- &$2.47$\\\hline
		\end{tabular}&
		\begin{tabular}{c|c|c|c|c}
			\hline
			&$|\nabla\phi|$&$|\nabla^2\phi|$&$|\nabla\phi|^2$& $|\nabla^2\phi|^2$\\\hline
			$|\nabla\phi|$&$1.9418$ &$ 2.0677$&--- &$\mathbf{1.8723}$\\\hline
			$|\nabla^2\phi|$& --- & $2.0267$& $2.0265$&---\\\hline
			$|\nabla\phi|^2$&--- & ---&$2.1611$&$2.1593$\\\hline
			$|\nabla^2\phi|^2$&--- & ---&--- &$2.1593$\\\hline
		\end{tabular}\\
		\includegraphics[width=0.44\textwidth]{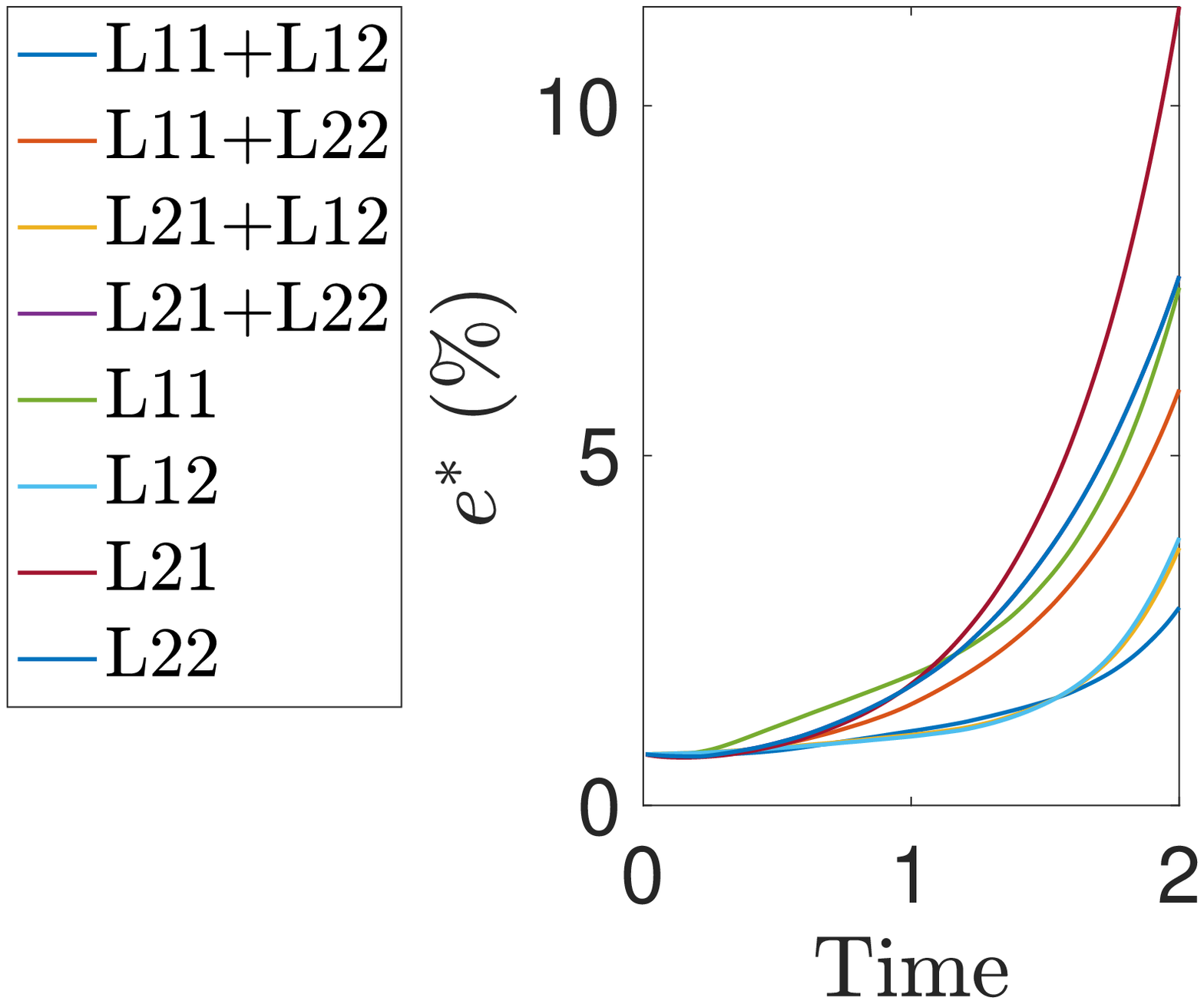}&	\includegraphics[width=0.44\textwidth]{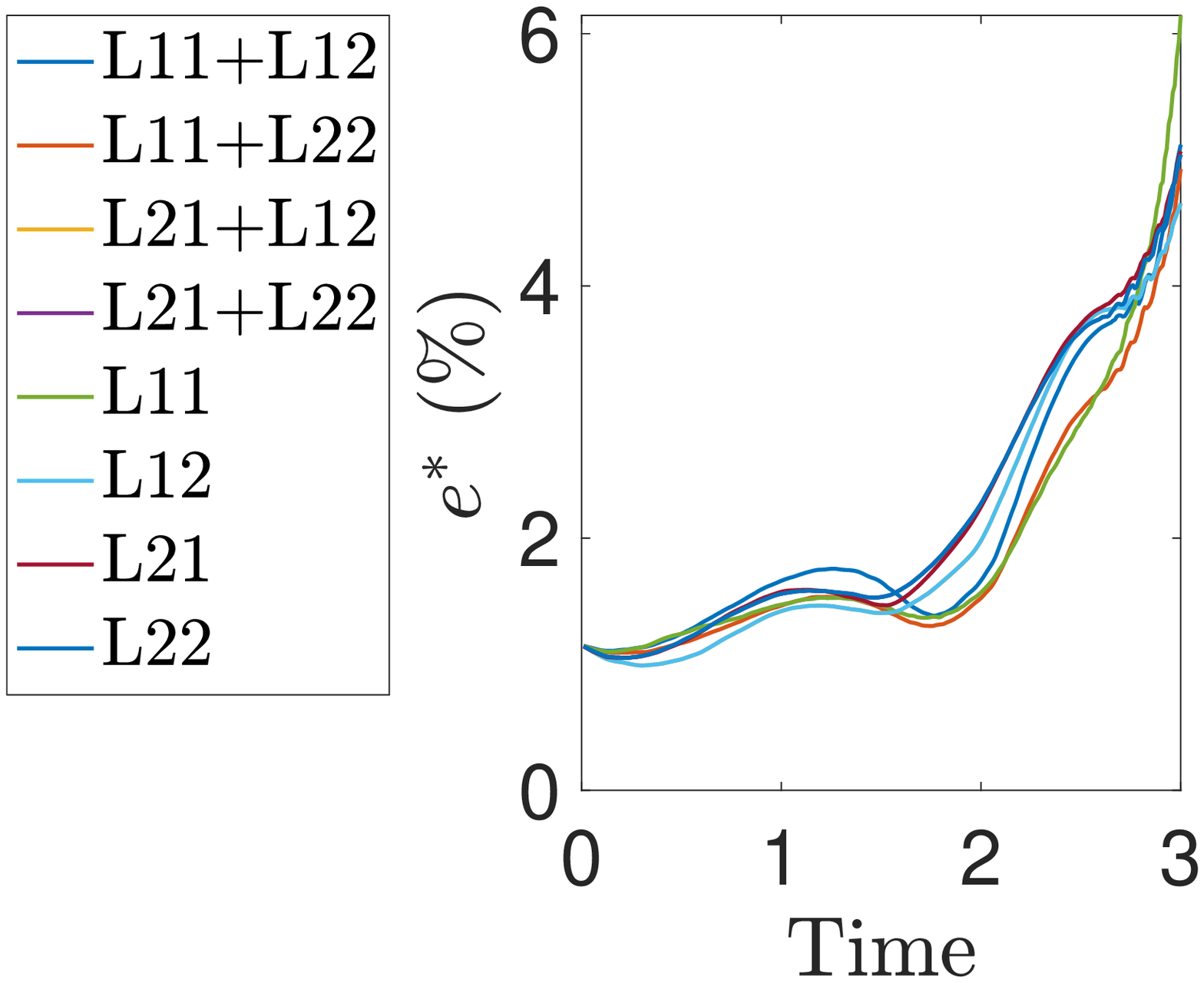}\\
		(c)&(d)\\
		Averaged $e^* (\%)$ of Figure \ref{fig_compare.potential}(c)&Averaged $e^* (\%)$ of Figure \ref{fig_compare.potential}(d)\\
		\begin{tabular}{c|c|c|c|c}
			\hline
			&$|\nabla\phi|$&$|\nabla^2\phi|$&$|\nabla\phi|^2$& $|\nabla^2\phi|^2$\\\hline
			$|\nabla\phi|$&$1.31$ &$1.52$&--- &$1.30$\\\hline
			$|\nabla^2\phi|$& --- & $1.33$& $1.33$&---\\\hline
			$|\nabla\phi|^2$&--- & ---&$\mathbf{1.28}$&$1.29$\\\hline
			$|\nabla^2\phi|^2$&--- & ---&--- &$1.31$\\\hline
		\end{tabular}&
		\begin{tabular}{c|c|c|c|c}
			\hline
			&$|\nabla\phi|$&$|\nabla^2\phi|$&$|\nabla\phi|^2$& $|\nabla^2\phi|^2$\\\hline
			$|\nabla\phi|$&$0.8198$ &$ 1.4997$&--- &$0.8246$\\\hline
			$|\nabla^2\phi|$& --- & $1.0997$& $1.0998$&---\\\hline
			$|\nabla\phi|^2$&--- & ---&$\mathbf{0.7329}$&$0.7464$\\\hline
			$|\nabla^2\phi|^2$&--- & ---&--- &$0.7463$\\\hline
		\end{tabular}\\
		\includegraphics[width=0.44\textwidth]{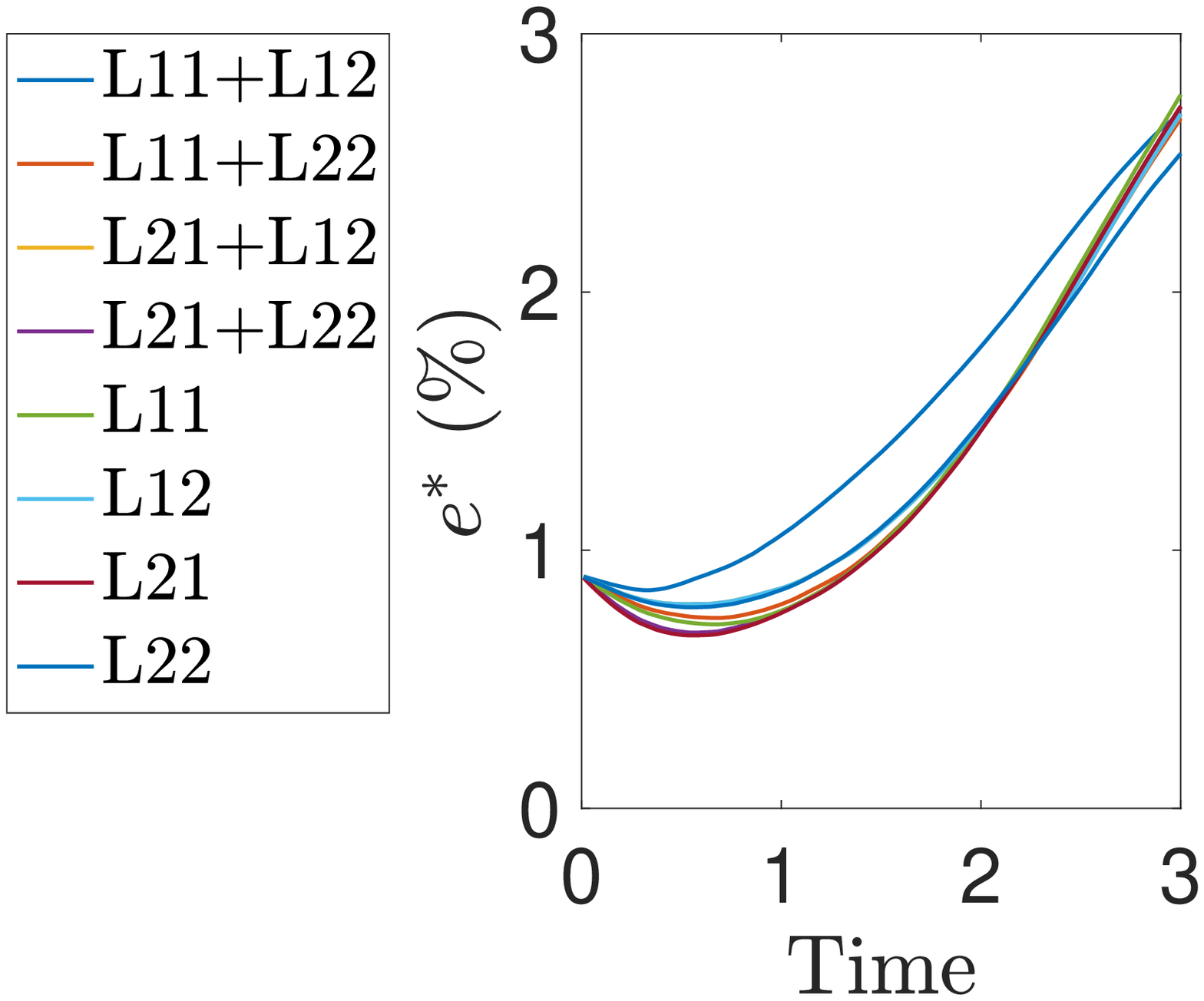}&	\includegraphics[width=0.44\textwidth]{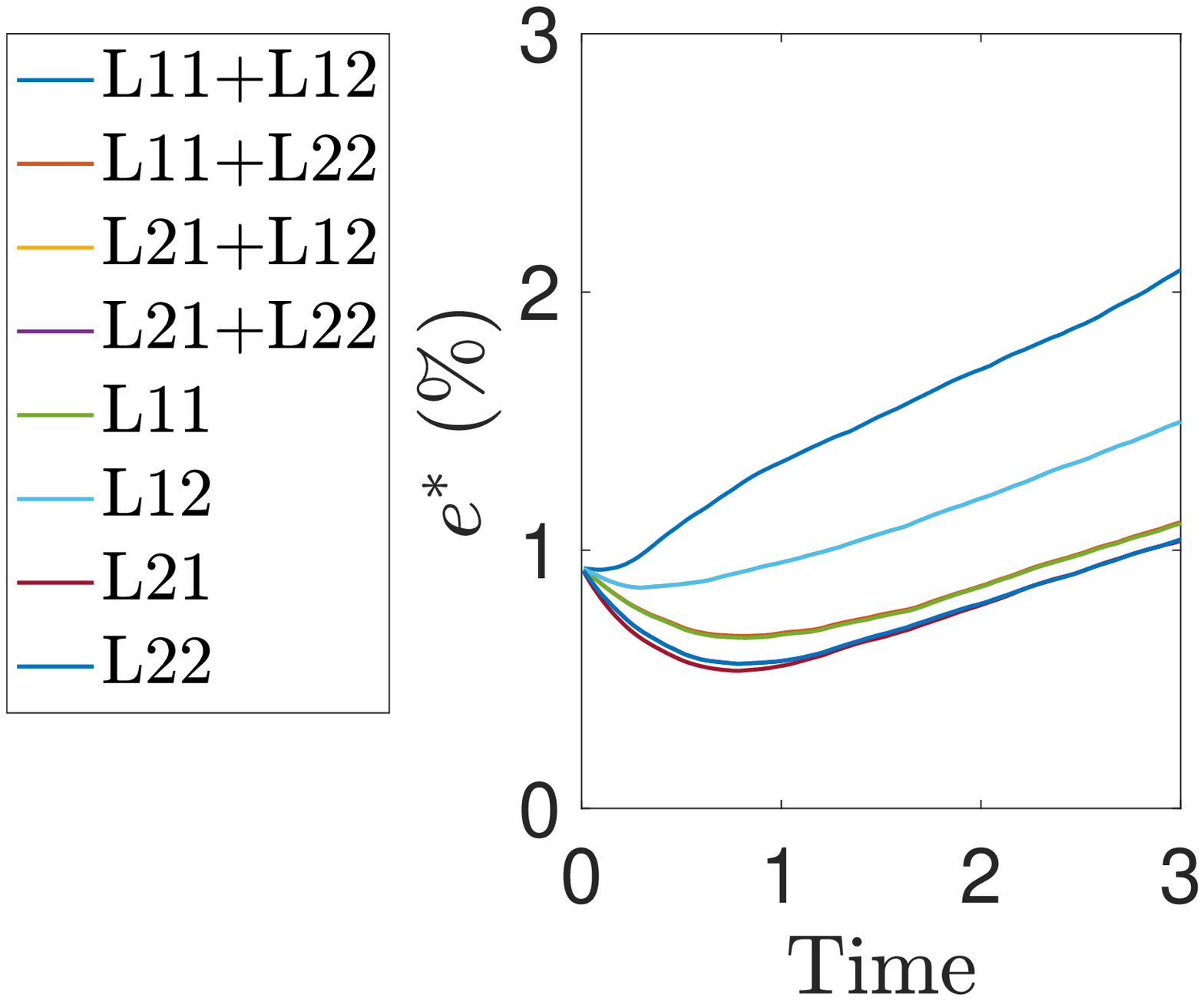}
	\end{tabular}
	\caption{Comparison of different regularizations. (a)-(d): The error $e^*(t)$ and its average over time for the identification of the potentials in Figure \ref{fig_compare.potential} (a)-(d), respectively. In the table, each entry shows the averaged $e^*(t)$ error with the combination of the column and row regularizers.
		The minimum error is bold.  In the legend of the figures, L{pq} denotes  the regularizer $|\nabla^q\phi|^p$ for integers $p,q>0$.
	}\label{fig_compare}
\end{figure}

\section{Identification of time-varying potentials}\label{sec_time}
As commonly observed in nature, many rules of interactions (or the potentials in (\ref{eq.aggregation})) change over time. For instance, for \textit{Temnothorax} ants, when the group density reaches at $0.8417$ ants per centimeter squared, the population behavior switches from tandem running to transporting~\cite{pratt2005behavioral}.  In this section, we propose a splitting-and-merge method to identify a time-varying potential.

%\subsection{Short-time Potential Identification and Interpolation}
\subsection{A splitting-and-merge method}
In our proposed method, we evenly divide the time interval $[0,T]$ into $Q$ subintervals: $\{[t^{(q)},t^{(q+1)}]\}_{q=1}^Q$. In each subinterval, we identify a time-independent potential $\phi^{(q)}$  as an approximation of the time-varying potential. Specifically, $\phi^{(q)}$ in the $q$-th subinterval  is identified by solving (\ref{eq.min.reg}):
\begin{align*}
	\phi^{(q)}=\argmin_{\phi} \left[\frac{1}{2}\int_{t^{(q)}}^{t^{(q+1)}} \int_{\Omega} (u_t-L_u\phi)^2 d\bx dt+  \alpha \int_{\Omega} |\nabla\phi| d\bx + \frac{\beta}{2} \int_{\Omega} |\nabla^2\phi|^2 d\bx\right].
	%\label{eq.min.reg.q}
\end{align*}
A time-varying potential is then constructed by gluing $\{\phi^{(q)}\}_{q=1}^Q$ together by a kernel function
\begin{align}
	\phi(t,\bx) = \sum_{q=1}^Q K_{h}\left(t-\frac{t^{(q)}+t^{(q+1)}}{2}\right)\phi^{(q)}(\bx),
	\label{eq.timevarying}
\end{align}
where $K_{h}(t)=h^{-1}K(t/h)$ for some  kernel function $K: \mathds{R}\rightarrow \mathds{R}^+$ with a compact support and $h>0$ is a bandwidth parameter. In the case when $K_{h}$ is a hat function, (\ref{eq.timevarying}) becomes a linear interpolation of the $\phi^{(q)}$'s. Here we take the Epanechnikov kernel~\cite{duong2015spherically}: $K(t) = \frac{3}{4C}(1-t^2)\mathds{1}_{\{|t|\leq 1\}}$, where $C$ is a constant such that $\int_\mathbb{R}K(t)\,dt=1$.

We can also partition the time interval into subintervals with overlaps to better utilize the data.
%When $q$ is very large, there are few data in each subinterval which maybe insufficient to identify a good approximation of the time-varying potential. To overcome this difficulty, one can use more data to identify each $\phi^{(q)}$ by considering regular subintervals with $\rho$-overlapping for some fixed $0\leq \rho\leq 1$.
In this case, $\phi^{(q)}$ is identified from the data in the time interval  $[(q-1-\rho)T/Q, (q+\rho)T/Q]$ for $q=2,3,\dots, Q-1$, where $0<\rho<1$ represents an overlapping ratio. When $q=1$ (resp. $q=Q$), $\phi^{(q)}$ is identified from the data in the time interval $[0,(1+\rho)T/Q]$ (resp. $[(Q-1-\rho)T/Q,T]$). We then construct the time-varying potential using (\ref{eq.timevarying}).
%To be consistent, we take $K^\phi$ as the Epanechnikov kernel defined in~\eqref{eq_kernel}. In Section~\ref{sec_num}, we discuss the effects of various covering schemes for $[0,T]$ as well as the relations between the rate of the varying potential and the precision of the identification.
%\subsection{Numerical Experiments}\label{sec.timevarying.num}

\subsection{Numerical experiments on time-varying potentials}

%In (d), we compare the error $\widetilde{e}(t)$ of identification results with various $\kappa$ while fixing $Q=10$. In all results,

%In Figure~\ref{fig_TRINO1}(d), we report the simulation errors for different rates of transition while fixing $Q=10$ and $\rho=0.5$. As was expected, the maximal errors occur around $t_B=1.5$. As $\kappa$ increases, the transitioning between $\phi^*_1$ and  $\phi_2^*$ takes less time. Thus each subinterval away from $t_B$ contains more information about one and only one static potential. Therefore, the identifications are more accurate.  Meanwhile, a sharper transition causes more error for identifications from subintervals near $t_B$ since the information from both $\phi_1^*$ and $\phi_2^*$ is mixed. This is shown as higher peaks of the $\widetilde{e}$ curves for larger $\kappa$. Moreover, larger $\kappa$ yields a narrower duration of transitioning, thus the position of peak approaches to $t_B$.

%Figure~\ref{fig_TRINO2}--$\phi_2: \theta_1 = 8, \theta_2=3, m_0=55$, $\phi_1: \theta_1 = 2, \theta_2=5, m_0=15$
%
%Figure~\ref{fig_TRINO3}--$\phi_1: \theta_1 = 8, \theta_2=3, m_0=55$, $\phi_2: \theta_1 = 5, \theta_2=2, m_0=25$.
%
%Figure~\ref{fig_TRINO4}--$\phi_1: \theta_1 = 8, \theta_2=3, m_0=55$, $\phi_2: \theta_1 = 5, \theta_2=2, m_0=25$. $t_B = 5.5$.
%
%Figure~\ref{fig_TRINO5}--$\phi_1: \theta_1 = 3, \theta_2=8, m_0=20$, $\phi_2: \theta_1 = 2, \theta_2=5, m_0=10$. $t_B = 1.5$.
%
%Figure~\ref{fig_TRINO6}--$\phi_1: \theta_1 = 3, \theta_2=8, m_0=20$, $\phi_2: \theta_1 = 2, \theta_2=5, m_0=10$. $t_B = 3.5$
\begin{figure}[h!]
	\begin{center}
		\begin{tabular}{c@{\hspace{2pt}}c}
			(a)&(b)\\
			\includegraphics[width=0.4\textwidth]{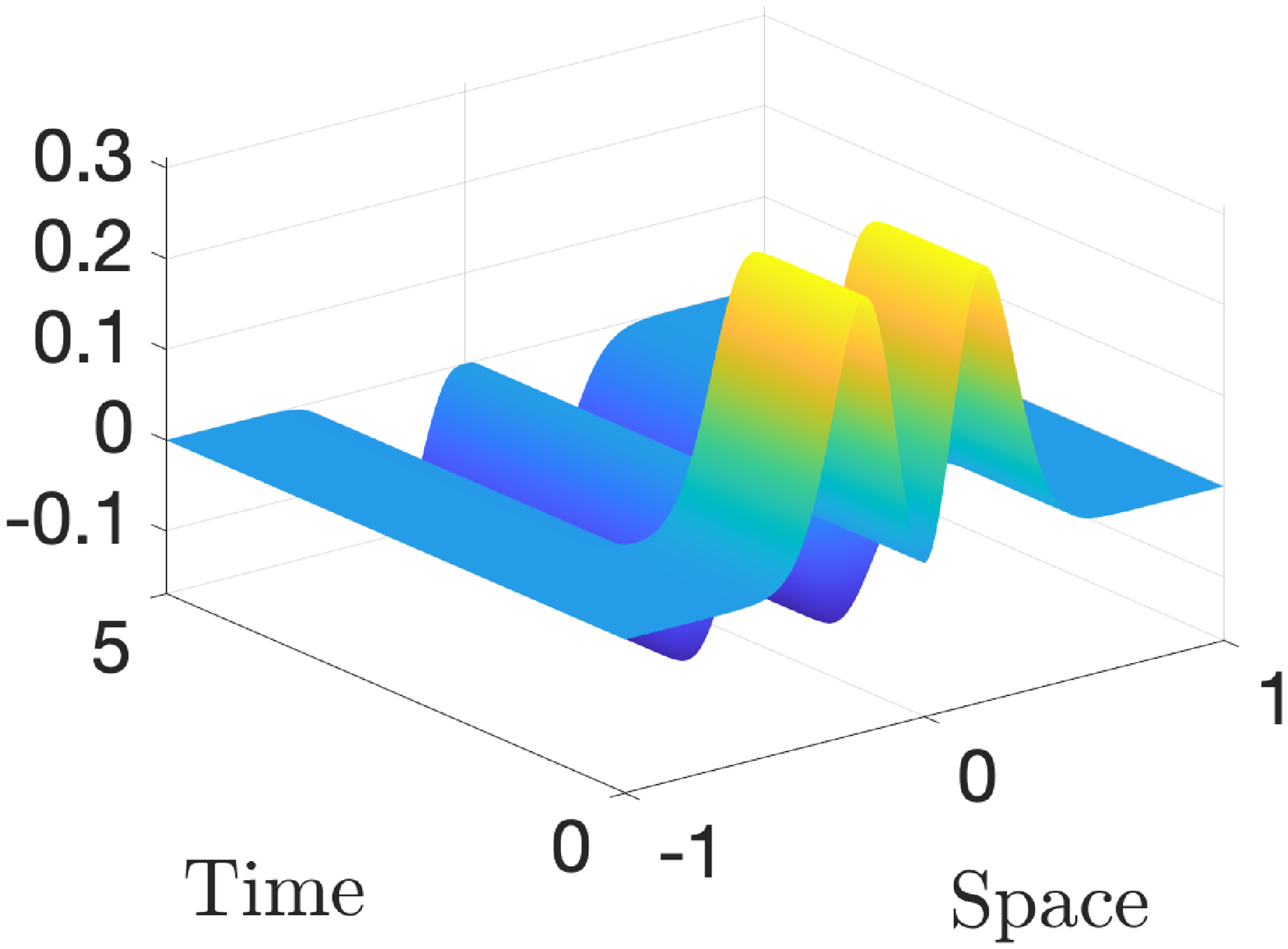}\hspace{0.5cm}	&
			\includegraphics[width=0.4\textwidth]{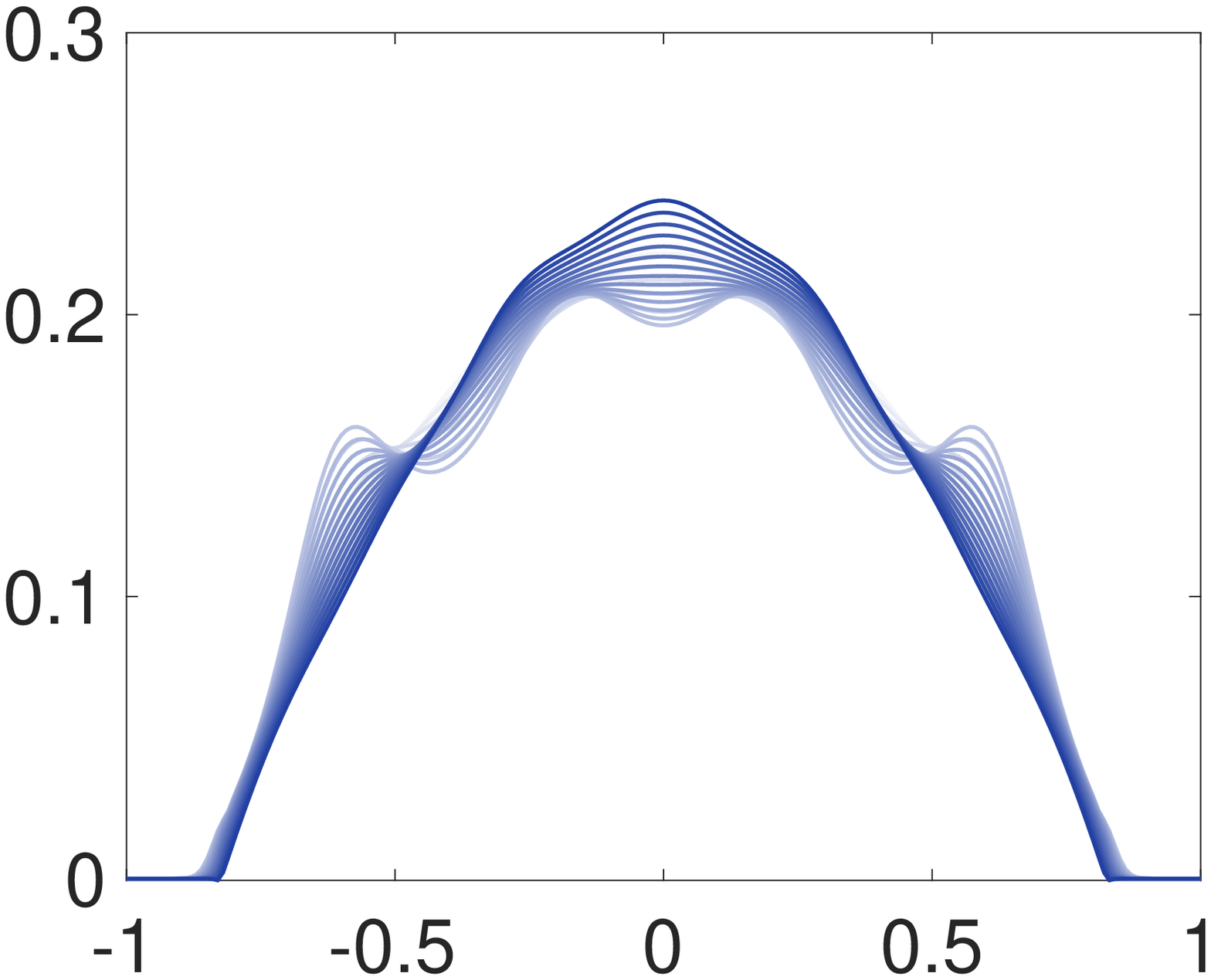}\\
			(c)&(d)\\
			\includegraphics[width=0.4\textwidth]{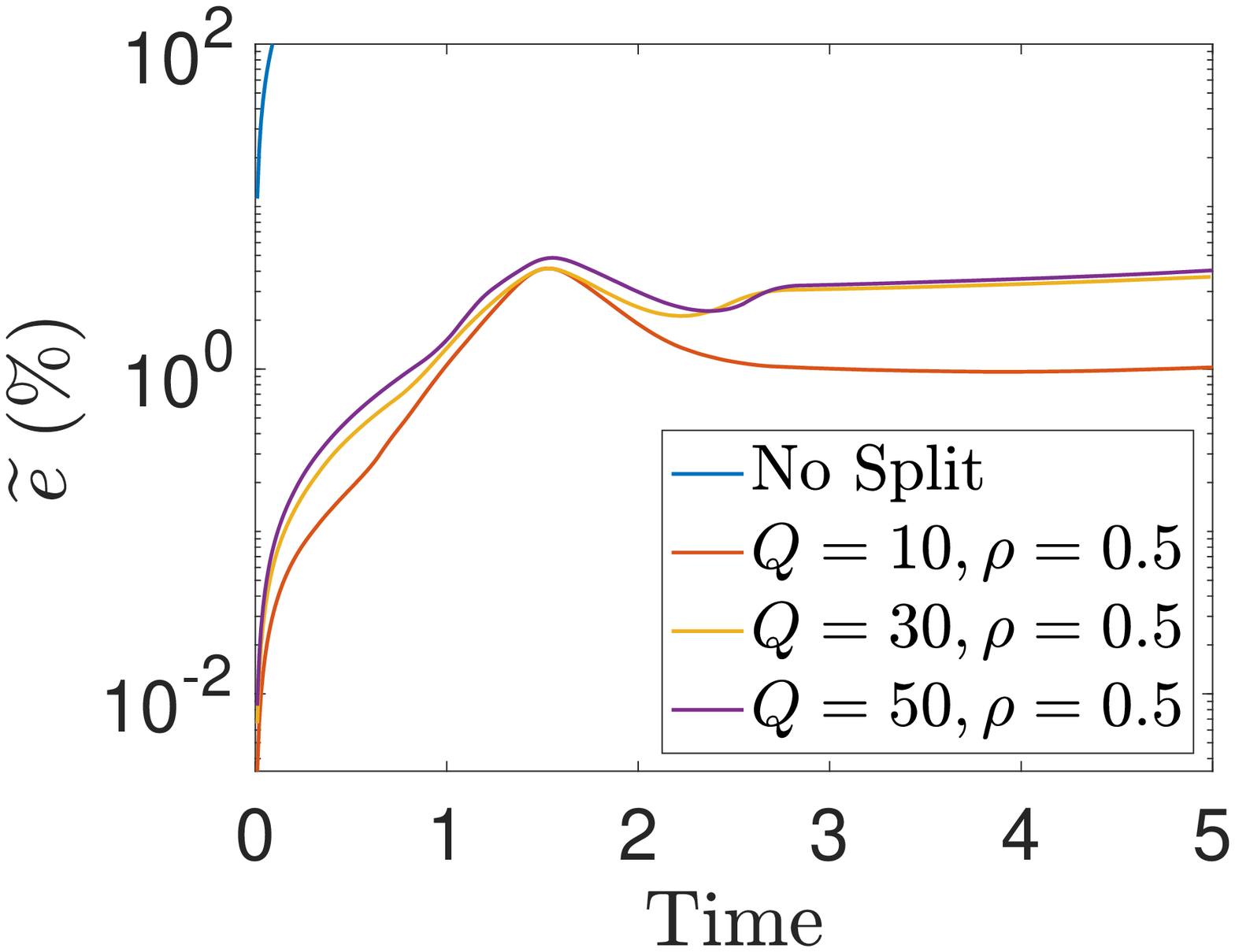}\hspace{0.5cm} &
			\includegraphics[width=0.4\textwidth]{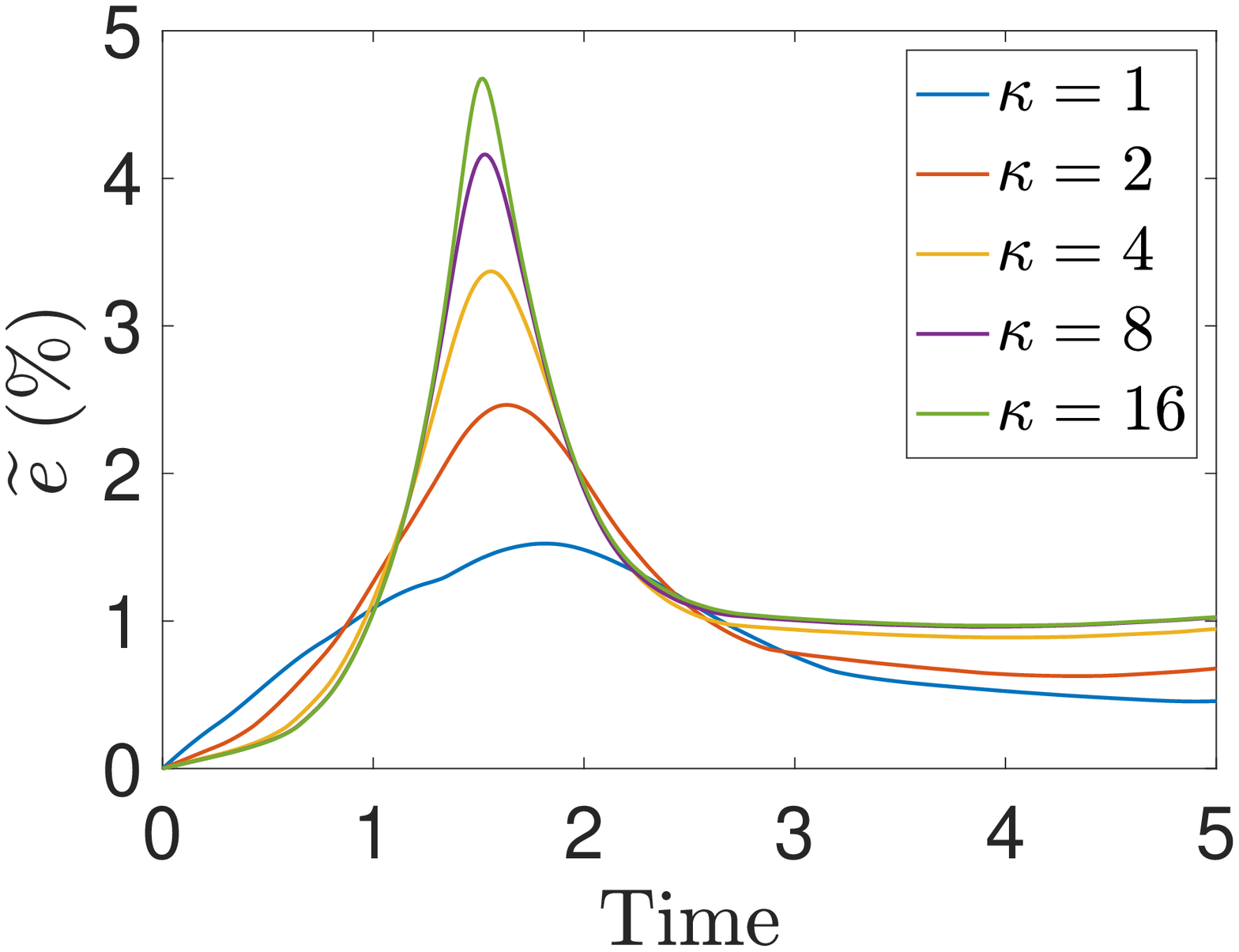}
		\end{tabular}
	\end{center}
	\caption{Identification of the time-varying potential (\ref{eq_TRINO_potential}). (a) The time-varying potential $\phi^*(t,x)$ with $t_B=1.5, \kappa=8$. (b) Data generated by solving (\ref{eq.aggregation}) with the potential in (a) and $\Delta t=0.01, \Delta x=0.01, T=5$.
		(c) The error $\widetilde{e}(t)$ of the identified time-varying potential with $Q=1,10,30,50$. We fix $\kappa=8, \rho=0.5$. (d) The error $\widetilde{e}(t)$ of the identified time-varying potential with various $\kappa$.
		In this experiment, we set $Q=10,\rho=0.5$, and the regularization parameters are chosen as $\alpha=1\times10^{-4}, \beta=1\times10^{-7}$, $\gamma =10$.
	}\label{fig_TRINO1}
\end{figure}

We consider the following time-varying potential
\begin{align}
	\phi^*(t,x) = g(t)\phi^*_1(x)+(1-g(t))\phi^*_2(x)\;,\;\text{with}\;g(t)=0.5+0.5\tanh(\kappa (t-t_B))\label{eq_TRINO_potential}
\end{align}
where $\kappa>0$ is a constant. In (\ref{eq_TRINO_potential}), $\phi^*$ is a weighted average potential from two static potentials $\phi^*_1$ and $\phi^*_2$, where $\kappa$ denotes the rate of transition, and $t_B$ represents the critical transition time. The larger $\kappa$ is, the faster $\phi^*$ transits from $\phi_2^*$ to $\phi_1^*$ around $t_B$. We use $t_B=1.5$,  $\phi_1^*(x)=\phi_{RA}(x;8,3,55)$, and $\phi_2^*(x)=\phi_{RA}(x;2,5,15)$ where $\phi_{RA}$ is defined in (\ref{eq_RA}). The graph of $\phi^*$ with $\kappa=8$ is shown in Figure \ref{fig_TRINO1}(a). Our data are generated by solving \eqref{eq.aggregation} with $\Delta t=\Delta x = 0.01$ and $T=5$, which is shown in Figure~\ref{fig_TRINO1}(b). When identifying time-varying potentials, we set $\rho=0.5$ and $h=0.19$.

In Figure~\ref{fig_TRINO1}(c), we fix $\kappa=8$ and
compare the error $\widetilde{e}(t)$ of the identification results when $Q=1,10,30$ and $50$. Our proposed method with $Q=10$ provides the best result. When $Q$ is large ($Q=30$ and 50), few data are available to identify a potential in each subinterval, leading to large errors. When $Q$ is small ($Q=1$ when there is no splitting), a time-independent potential is identified to approximate a time-varying potential in a large time interval, leading to large errors. There is a tradeoff between the number of data in each subinterval and the subinterval length. A good choice of $Q$ gives rise to the best result.

We then fix $Q=10,\rho=0.5$ and use our proposed method to identify the time-varying potentials with different $\kappa$. The error $\widetilde{e}(t)$ is shown in Figure~\ref{fig_TRINO1}(d). Note that $\phi^*$ changes most rapidly at $t_B=1.5$. As a result, the maximal errors occur around $t=1.5$. As $\kappa$ increases, $\phi^*$ transits faster from $\phi^*_1$ to  $\phi_2^*$. In each subinterval away from $t_B$, the potential $\phi^*$  is dominated by either $\phi^*_1$ or  $\phi_2^*$. Thus we have smaller identification errors in the subintervals away from $t_B$. Meanwhile, a larger $\kappa$ gives a sharper transition of $\phi^*$ around $t_B$. The identification errors in the subintervals around $t_B$ are larger. This is justified by the peaks in Figure~\ref{fig_TRINO1}(d). Moreover, a larger $\kappa$ yields a shorter duration of transitioning, which leads to a narrower peak in the error $\widetilde{e}(t)$.

\section{Potential identification from agent-based data}\label{sec.agentbased}
In previous sections, we consider data (density functions) that are solutions of (\ref{eq.aggregation}). In practice, the density function may not be directly observed. For example, the agent-based data records the agents' locations at different times. Our goal is to identify the potential from the agents' locations over a period of time.

\subsection{Conversion from agent-based data to density function}\label{sec_convert}

We first estimate the density function from the agents' locations and then apply our proposed algorithm to estimate a potential.
Let $\bx_v(t^n)\in\mathbb{R}^d$ be the location of the $v$-th agent at time $t^n$ for $v=1,2,\dots, V$. For any $\bx\in\mathbb{R}^d$, we compute the density function $u(t,\bx)$ at $t^n$ as
\begin{align}
	u(t^n,\bx) =\frac{1}{V} \sum_{v=1}^V K^\bx_H(\bx-\bx_v(t^n))\;,n=0,1,\dots,N-1\;,\label{eq_Kx}
\end{align}
where $K_H^\bx(\bx) = H^{-1/2}K^\bx(H^{-1/2}\bx)$ for some kernel function $K^\bx:\mathds{R}^d\rightarrow \mathds{R}^+$ with a compact support, and $H\in\mathbb{R}^{d\times d}$ is a positive definite matrix. For any $t\in[0,T]$, we compute the data-induced density function
\begin{align}
	u(t,\bx) =\frac{1}{N} \sum_{n=0}^{N-1} K^t_{h}(t-t^n)u(t^n,\bx)\;,\label{eq_data_u}
\end{align}
where $K_{h}^t(t)=Ch^{-1}K^t(h^{-1}t)\mathds{1}_{{\{-r<t\leq 0\}}}$ for some bandwidth $h>0$, some kernel function $K^t:\mathds{R}\rightarrow \mathds{R}^+$, and a thresholding parameter $r>0$. Here $C$ is a constant such that $\int_\mathbb{R}K_{h}^t(t)\,dt =1$. The indicator function $\mathds{1}_{\{0<t\leq r\}}$ ensures that $u(t,\bx)$ is computed from the data in the time interval ${(t^n-r,t^n]}$. It also imposes the assumption that each agent has a short memory and cannot foresee the future. We then sample $u(t,\bx)$ on  a regular grid in space and time to obtain a data set for potential identification.
%We then generate our data set by evaluating $u(t,\bx)$ on our space-time grids and apply Algorithm \ref{alg2} to identify the potential.

In this paper, we use the spherical Epanechnikov kernels~\cite{duong2015spherically} for the estimation in~\eqref{eq_Kx} and  for~\eqref{eq_data_u}, i.e.,
\begin{align}
	K^\bx(\bx)=\frac{d+2}{2V_d}(1-|\bx|^2)\mathds{1}_{\{|\bx|^2\leq 1\}}\;,\; K^t(t)=\frac{3}{4C'}(1-t^2)\mathds{1}_{\{|t|\leq 1\}}\label{eq_kernel},
\end{align}
with the thresholding parameter $r=1$. Here
$V_d$ denotes the volume of a unit ball in $\mathbb{R}^d$ and $C'$ is a normalization parameter such that $\int_{\mathbb{R}}K^t(t)\,dt=1$.

%The noise considered in Section \ref{sec_num} is the additive Gaussian noise on the population density function $u$. When the individuals tracking data is given, it is more natural to consider noise applied on the recorded location of individuals.

%We note that the data-induced density function~\eqref{eq_data_u}	provides the framework for a  realistic \textit{noise model}.
We consider a  realistic \textit{noise model} for the agent-based data: The agents' locations are noisy due to the lack of measurement precision such that the measured positions are
%the noise  comes from the lack of precision when the position of each individual in a group is recorded:
\begin{align}
	\widetilde{\bx}_v(t^n) = \bx_v(t^n)+\bm{\varepsilon}_{v}^n\;,
	\label{eq.agent.noise}
\end{align}
where $\bm{\varepsilon}_{v}^n\in\mathds{R}^d$ represents noise. The noise contaminates the estimated density in (\ref{eq_Kx}) and (\ref{eq_data_u}).
%	\begin{align}
%		\widetilde{u}(t,\bx) =\frac{1}{N} \sum_{n=0}^{N-1} K^t_h(t-t^n)\widetilde{u}(t^n,\bx)\;,\text{with}~
%		\widetilde{u}(t^n,\bx) =\frac{1}{V} \sum_{v=1}^V K^x_H(\bx-\widetilde{\bx}_v(t^n))\;.
%		\label{eq_data_noise_u}
%	\end{align}
This is different from the additive noise model (\ref{eq.data.noise}),
which introduces additive noise to the density function. The noise models in (\ref{eq.agent.noise}) and (\ref{eq.data.noise})  differ in two aspects: First, (\ref{eq.data.noise}) has the same level of noise across time and space, whereas (\ref{eq.agent.noise}) has larger noise in the regions with a higher density. Second,  (\ref{eq.data.noise}) does not consider possible correlations among the density values in a neighborhood, while (\ref{eq.agent.noise}) incorporates these correlations via in the kernel density estimation. In the following numerical experiments, we will illustrate these behaviors and  show that our proposed method works successfully on the noise model  (\ref{eq.agent.noise}). Furthermore, even when the agent data are not simulated from an aggregation equation with certain potential, our proposed method can identify a potential which generates the dynamics as a good approximation of the given data.
\subsection{Numerical experiments}
In the first example, we generate agents' locations as samples from a probability distribution simulated from (\ref{eq.aggregation}) with the potential $\phi_{\text{RA}}(x;5,2,12)$. We first solve (\ref{eq.aggregation}) with the initial condition \eqref{Yao_initial} and $\Delta x= \Delta t = 0.01$ to to obtain the density function $u$. Then the agent-based locations are randomly sampled from the probability distribution whose density function is proportional to $u$. Figure~\ref{fig_DRINO}~(a) shows $100$ samples at each time level perturbed by a Gaussian noise with variance $\sigma=0.01$. From this data set, we compute a density function by the kernel method  and then identify the potential by Algorithm \ref{alg2}, as described in Section~\ref{sec_convert}. For the kernel density estimation, we take the window parameter $H=4\Delta x$ for space and $h=2\Delta t$ for time.  The identified potentials from data with various number of samples are shown in Figure~\ref{fig_DRINO} (b), where darker curves are identified potentials from more samples. The identified potential converges to the underlying potential as we increase the number of samples. This is because more samples give a more accurate approximation of the density function from the agents' locations.  Figure~\ref{fig_DRINO}(c) shows the averaged error  $\widetilde{e}(t)$ over time as a function of the sample size   when $\sigma = 0.01, 0.1,0.5$ respectively. As the noise standard deviation  $\sigma$ increases, the averaged error $\widetilde{e}(t)$ increases. %Meanwhile, a larger number of samples serves as a stronger regularization, which reduces the influence of noise.
\begin{figure}[t!]
	\begin{center}
		\begin{tabular}{c@{\hspace{2pt}}c@{\hspace{2pt}}c}
			(a)&(b)&(c)\\
			\includegraphics[width=0.33\textwidth]{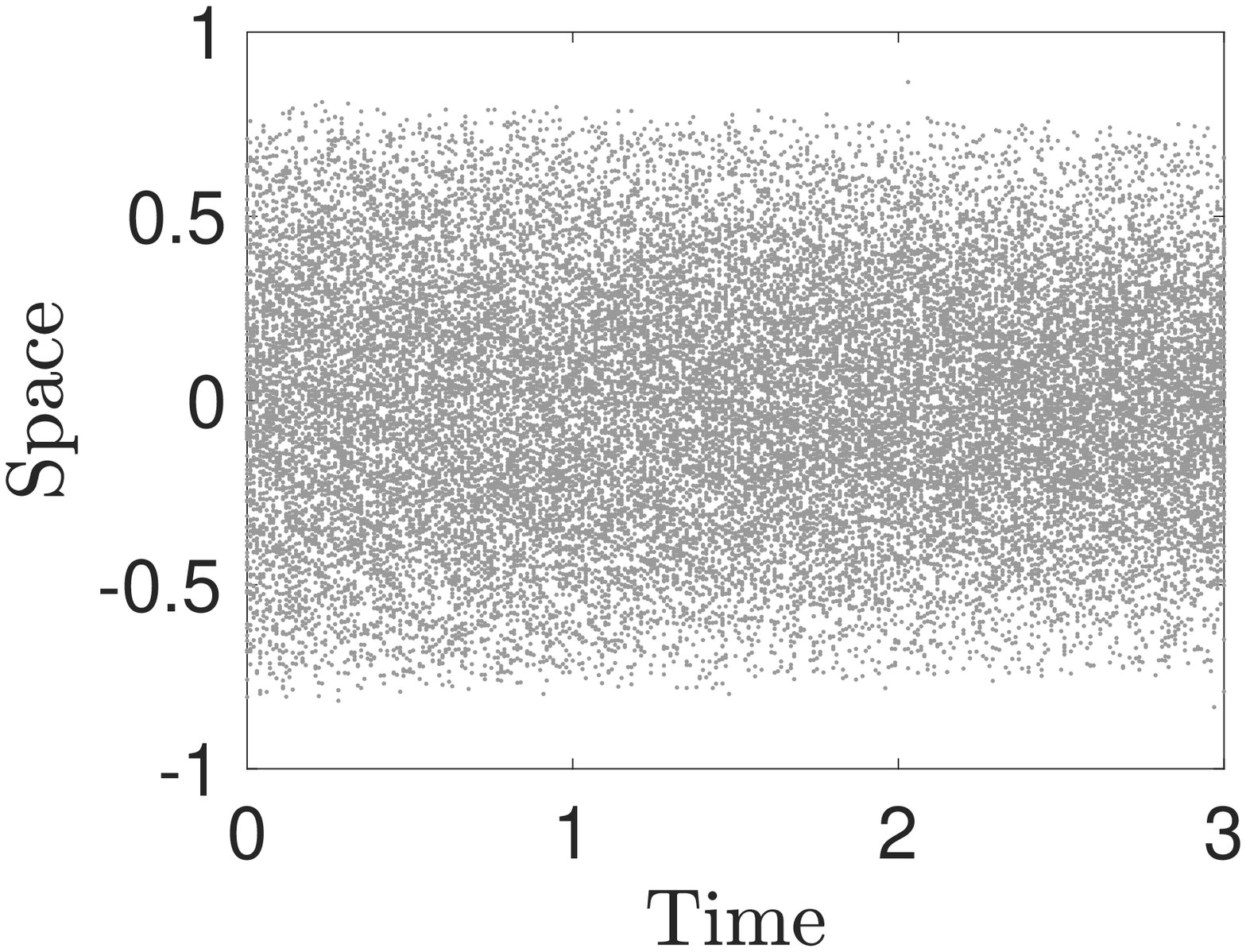}&
			\includegraphics[width=0.33\textwidth]{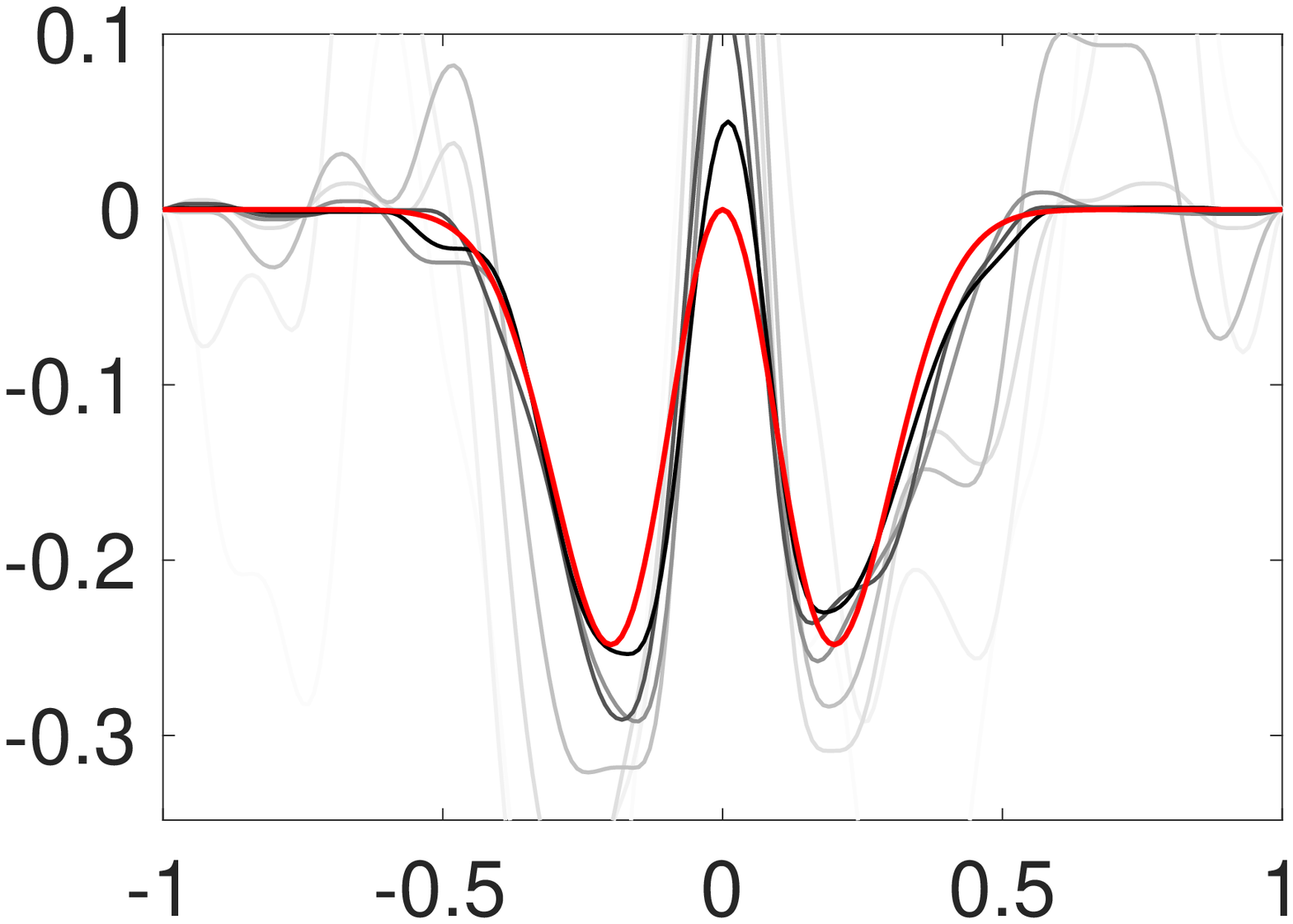}&
			\includegraphics[width=0.33\textwidth]{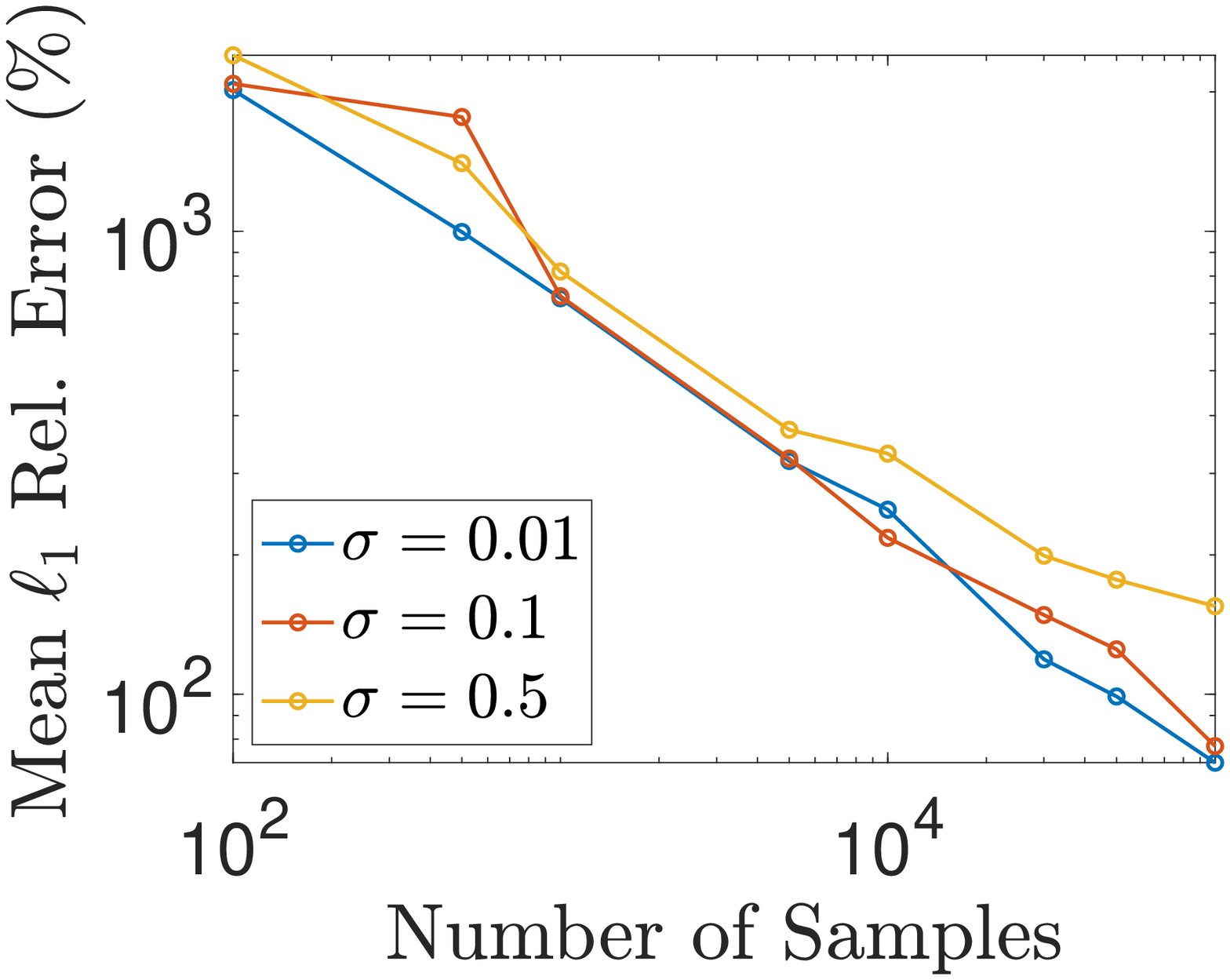}
		\end{tabular}
	\end{center}
	\caption{Agent based data with an underlying potential.
		(a) Agent-based data sampled from the solution of \eqref{eq.aggregation} with the potential~$\phi_{\text{RA}}(x;5,2,12)$ using $\Delta x= \Delta t=0.01$.  Each time level contains 100 agents. The positions are purterbed by Gaussian noise with standard deviation $\sigma = 0.01$.
		(b) The underlying potential (red) and identified potentials (black) from data with the number of agents ranging over $10^2,5\times 10^2,10^3,5\times10^3,10^4, 3\times 10^4,5\times 10^4$ and $10^6$. Darker curves are results using larger numbers of agents. (c) With noise variance $\sigma=0.01, 0.1, 0.5$, the averaged error $\widetilde{e}(t)$ of the identified potentials versus the number of agents. In both (b) and (c), we fix $\alpha=1\times10^{-3},\beta=1\times10^{-7},\gamma=10$.}\label{fig_DRINO}
\end{figure}

%Second example (Reynold's boids model). {\trHere}
We next consider agent-based data, which are not generated by solving the aggregation equation. Specifically, we generate the data from the Reynold's boids model \cite{reynolds1987flocks} which follows a set of interaction rules for the agents. % and has no underlying potential.
Our proposed method is then used to identify a potential that approximates the dynamics of the Reynold's boids model. In the data generating process, 500 agents are used to simulate an repulsive dynamic on the domain $[-1,1]^2$ for 200 steps with $\Delta t=0.01$. At every time step, the locations of all agents are recorded. Figure \ref{fig.RINO.2d.agent.repul}(a)-(d) show the distribution of these agents at $t=0, 1, 1.5$ and $2$. The density function is then computed by the kernel methods discussed in Section~\ref{sec_convert}. We set $H$ as a diagonal matrix with diagonal entries $0.15$ and there is no smoothing in the temporal direction, i.e., only (\ref{eq_Kx}) is used. The density is sampled on the grid with $\Delta x=0.1,\Delta t=0.01$. The cross-sections of the density function along $x_1=0$ are shown in Figure \ref{fig.RINO.2d.agent.repul}(e). Since the kernel method has a smoothing effect, we do not apply SDD in this example. The parameters are set as $\alpha=1\times10^{-7}, \beta=1\times10^{-9}$  and $\gamma=0$. Our proposed method identifies the potential shown in Figure \ref{fig.RINO.2d.agent.repul}(f), which corresponds to the repulsive dynamic.
We next verify if the data are well approximated by the dynamics of the aggregation equation with the identified potential.
After solving (\ref{eq.aggregation}) with the identified potential, we show the cross-sections of the solution along $x_1=0$ in Figure \ref{fig.RINO.2d.agent.repul}(g). We observe that, the simulated solution approximates the given data well and recovers the repulsive behavior. The error $\widetilde{e}(t)$ is shown in Figure~\ref{fig.RINO.2d.agent.repul}(h). %For data without an underlying potential, the proposed method can identify a potential with which the solution to (\ref{eq.aggregation}) approximates the dynamic of the given data.

In the next experiment, we identify a time-varying potential from agent-based data containing two different dynamics: the agents first expand then concentrate. The data are generated in the same manner as the previous experiment with $\Delta t=0.01$ and $T=4$. Our spatial computational domain is $[-1,1]^2$ with a $21\times 21$ grid. In our experiment, $\alpha=1\times10^{-7}, \beta=1\times10^{-9},\gamma=0$ and $\rho=0$ are used. We apply our algorithm with $Q=2,4,8$. After the potentials on each subinterval are identified, we construct the time-varying potential by linearly interpolating them. Our results are shown in Figure \ref{fig.RINO.2d.agent2}. Figure \ref{fig.RINO.2d.agent2}~(a)-(d) show the distribution of the agent-based data at $t=0,1.5,2.5$, and $4$.
In Figure \ref{fig.RINO.2d.agent2}~(e)-(f), we show the cross-section of the identified potential with $Q=2$ and $Q=4$ along $x_2=0$. The two identified potentials look similar.  As time marches, the potential transits from a repulsive one to an attractive one, corresponding to the given data's two dynamic phases. The error $\widetilde{e}(t)$ with different $Q$'s is shown in Figure \ref{fig.RINO.2d.agent2}(f). The error $\widetilde{e}(t)$ for all $Q$'s are close to each other. For this data set, the result is not sensitive to the value of $Q$. Similar to the observations in Section \ref{sec_time}, the error $\tilde{e}(t)$ achieves its maximum around the time of transition between the two dynamics.

%\begin{figure}
%	\begin{tabular}{cccc}
%		(a)&(b)&(c)&(d)\\
%		\includegraphics[width=1.5in]{}&
%			\includegraphics[width=1.5in]{}&
%				\includegraphics[width=1.5in]{}&
%					\includegraphics[width=1.5in]{}\\
%		\multicolumn{2}{c}{(e)}&	\multicolumn{2}{c}{(f)}\\
%		\multicolumn{2}{c}{	\includegraphics[width=0.4\textwidth]{}}&	\multicolumn{2}{c}{\includegraphics[width=0.4\textwidth]{}}\\
%			\multicolumn{2}{c}{(g)}&	\multicolumn{2}{c}{(h)}\\
%		\multicolumn{2}{c}{	\includegraphics[width=0.4\textwidth]{}}&	\multicolumn{2}{c}{\includegraphics[width=0.4\textwidth]{}}
%	\end{tabular}
%	\caption{Performance of on the Reynold's boids model (attraction). (a) Simulated boids locations (black dots) in the $1^{\text{st}}$ frame, (b) $100^{\text{th}}$ frame, (c) $150^{\text{th}}$ frame, and (d) $200^{\text{th}}$ frame. (e) Cross sections of given data along $x_1=0$. (f) Graph of the identified potential $\widehat{\phi}$. (g) Cross sections of solution to (\ref{eq.aggregation}) with $\widehat{\phi}$ along $x_1=0$. (h) Evolution of $\widetilde{R}$. Here we set $\alpha=10^{-5}, \beta=10^{-7},\gamma=0$.}\label{fig.RINO.2d.agent}
%\end{figure}

\begin{figure}[t!]
	\begin{center}
		\begin{tabular}{c@{\hspace{2pt}}c@{\hspace{2pt}}c@{\hspace{2pt}}c}
			(a)&(b)&(c)&(d)\\
			\includegraphics[trim={2cm 0 3.2cm 0},clip,width=0.24\textwidth]{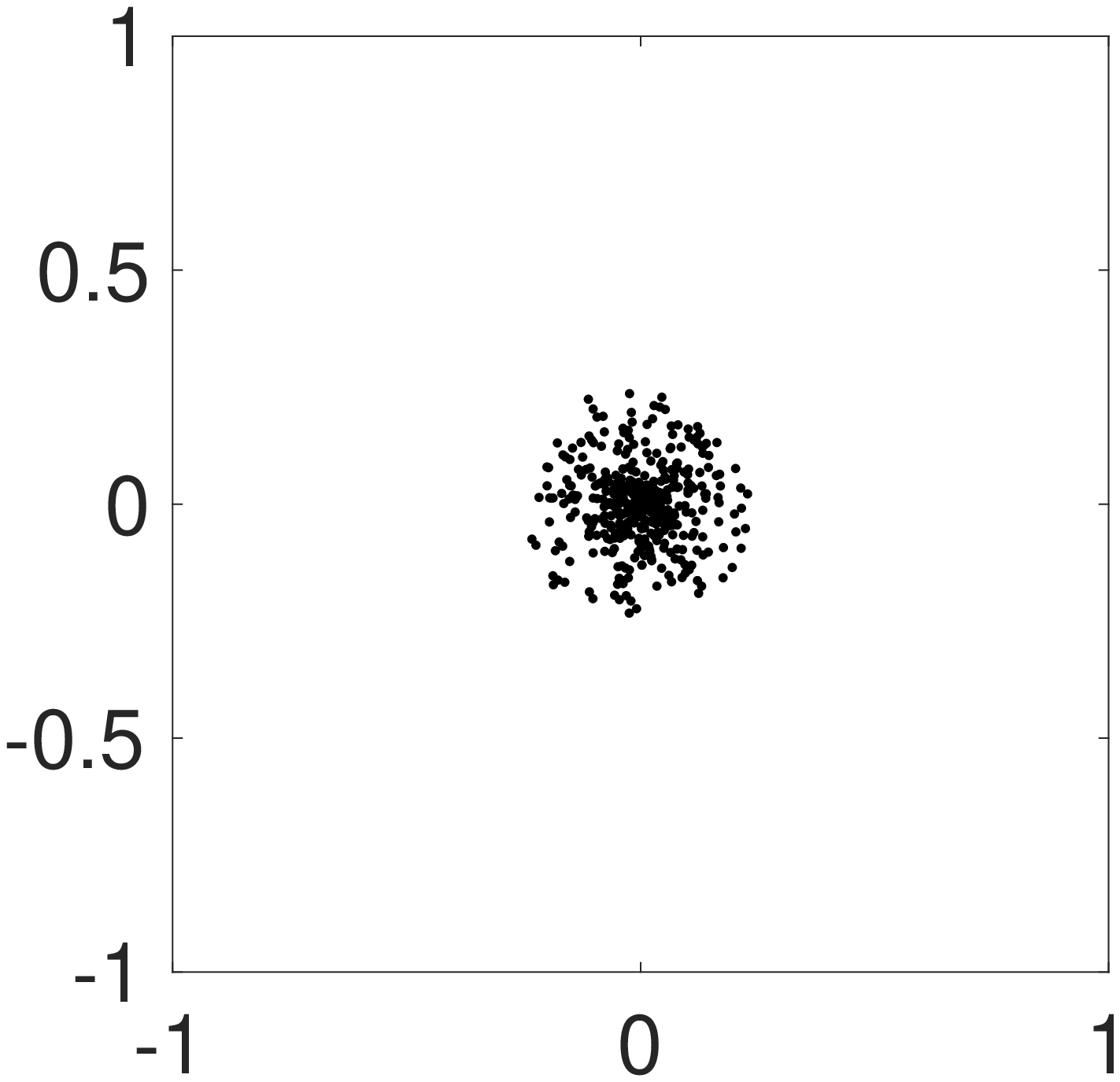}&
			\includegraphics[trim={2cm 0 3.2cm 0},clip,width=0.24\textwidth]{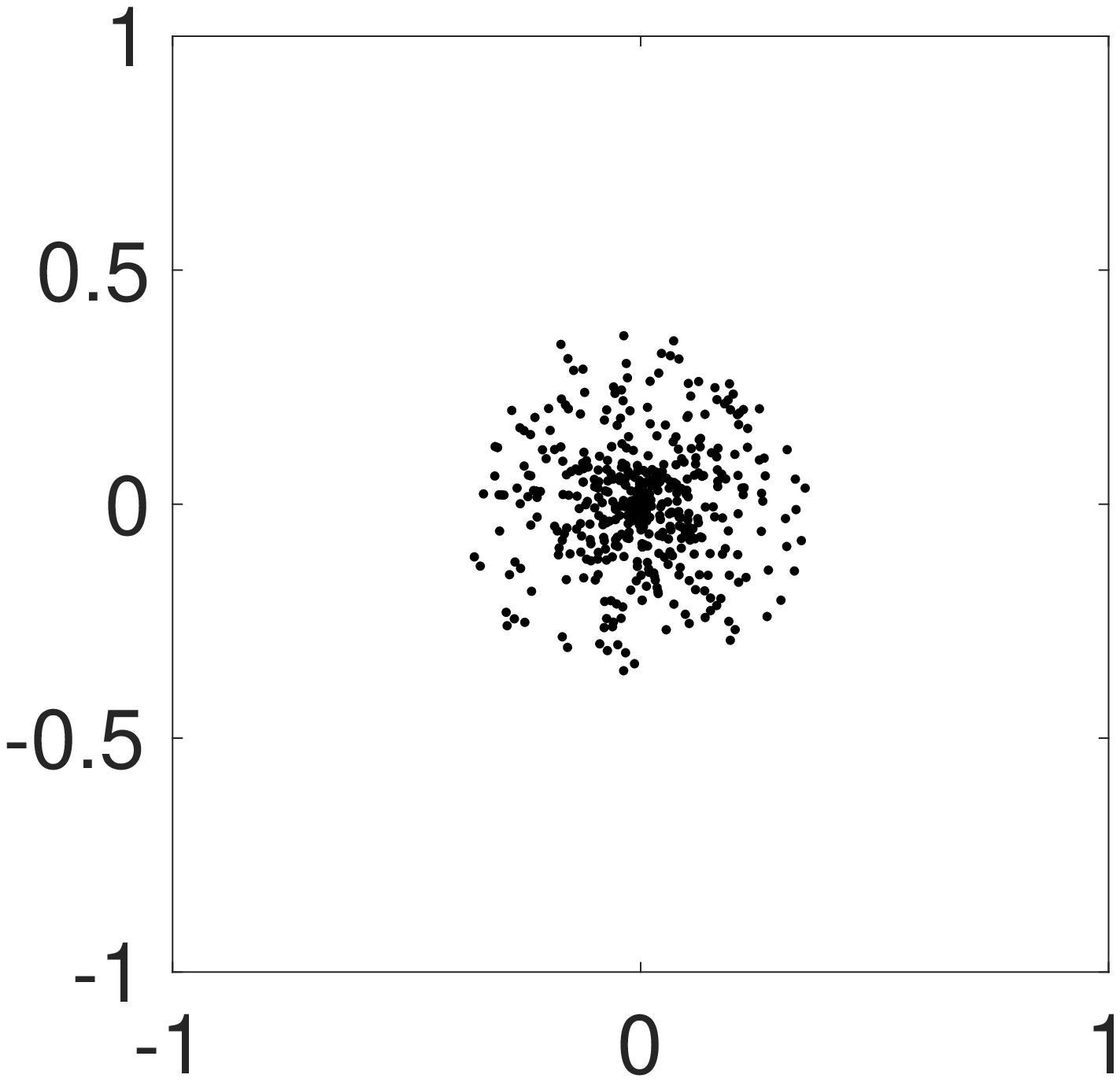}&
			\includegraphics[trim={2cm 0 3.2cm 0},clip,width=0.24\textwidth]{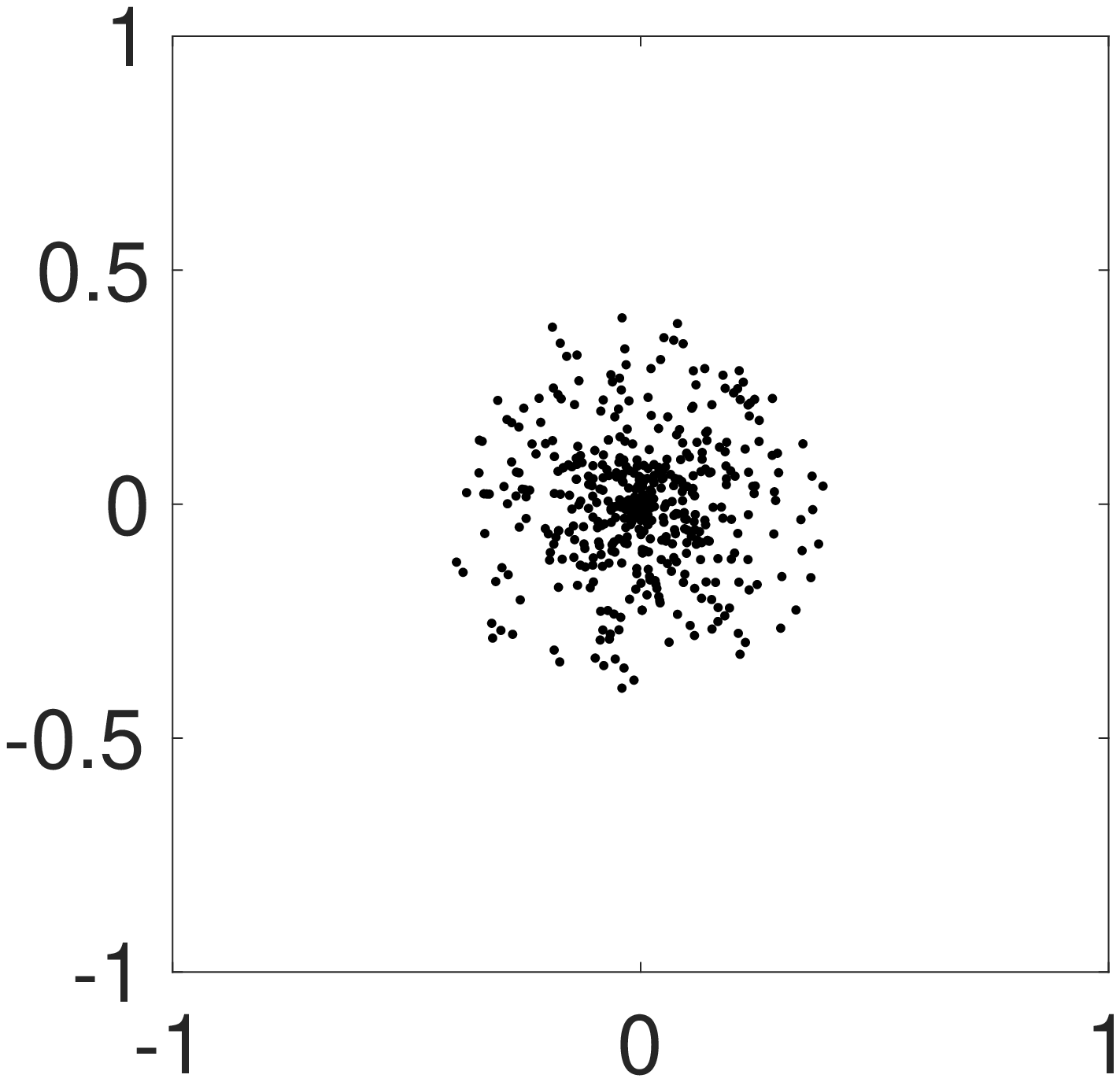}&
			\includegraphics[trim={2cm 0 3.2cm 0},clip,width=0.24\textwidth]{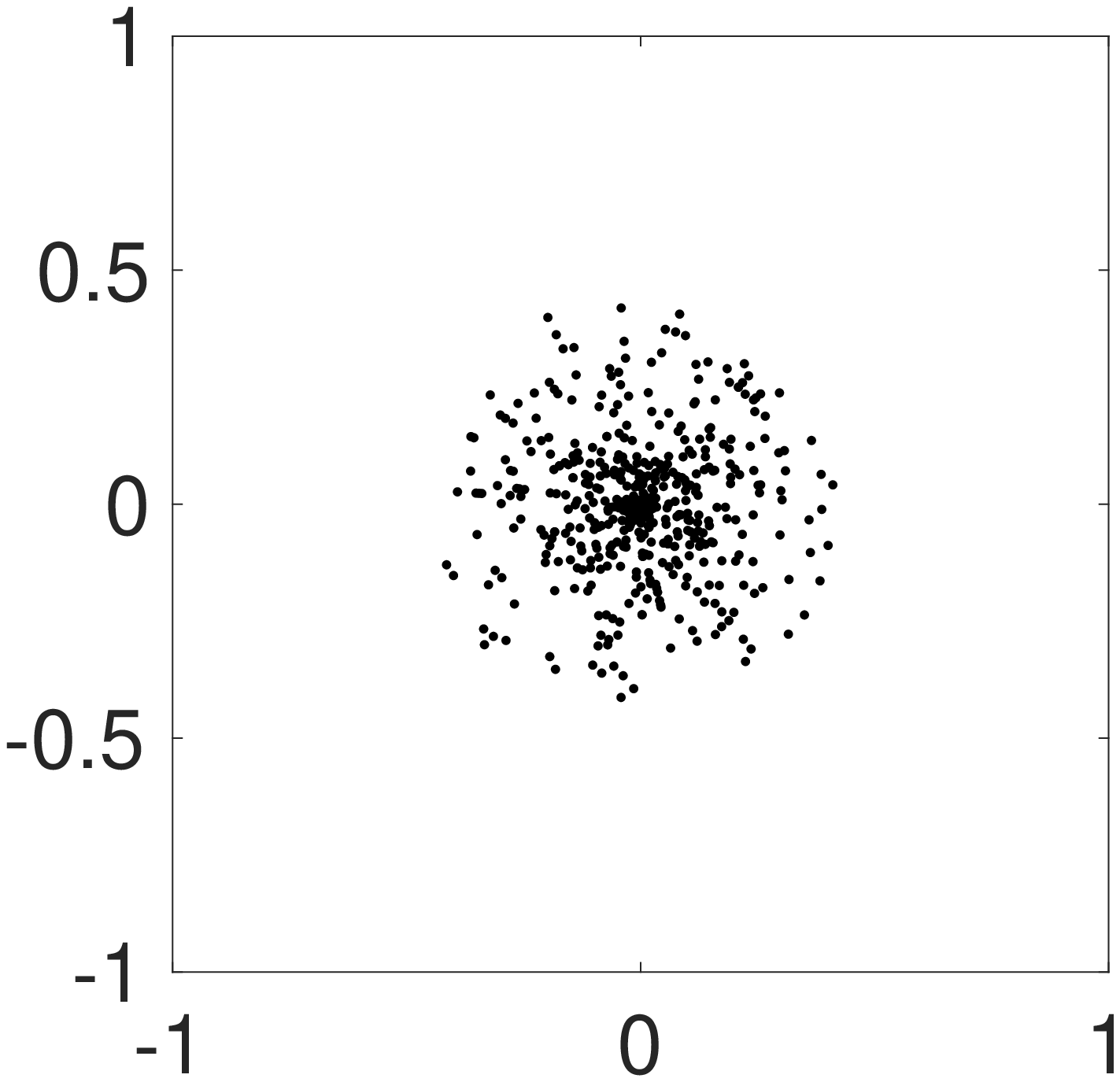}\\
			\multicolumn{2}{c}{(e)}&	\multicolumn{2}{c}{(f)}\\
			\multicolumn{2}{c}{	\includegraphics[width=0.37\textwidth]{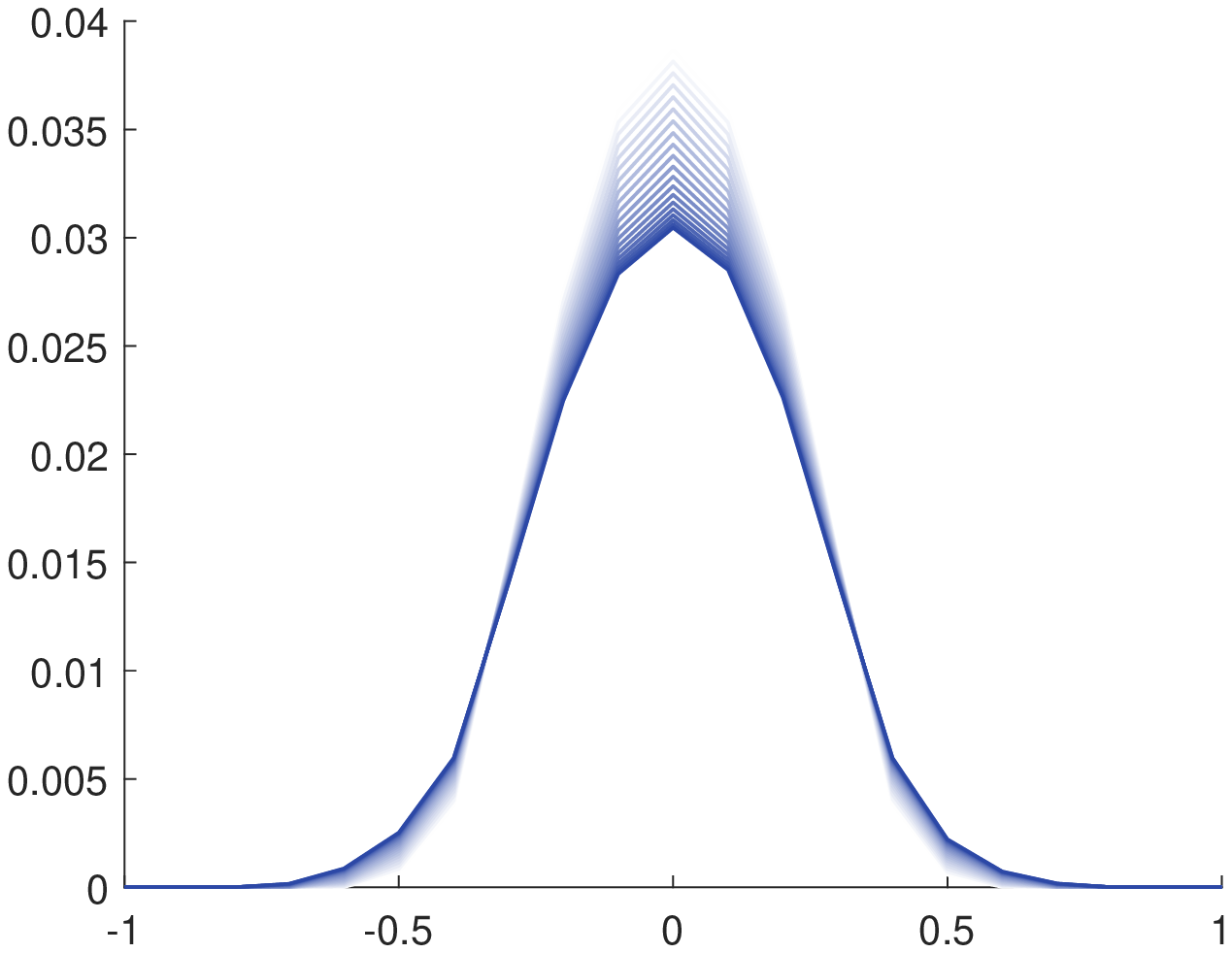}}&	\multicolumn{2}{c}{\includegraphics[width=0.37\textwidth]{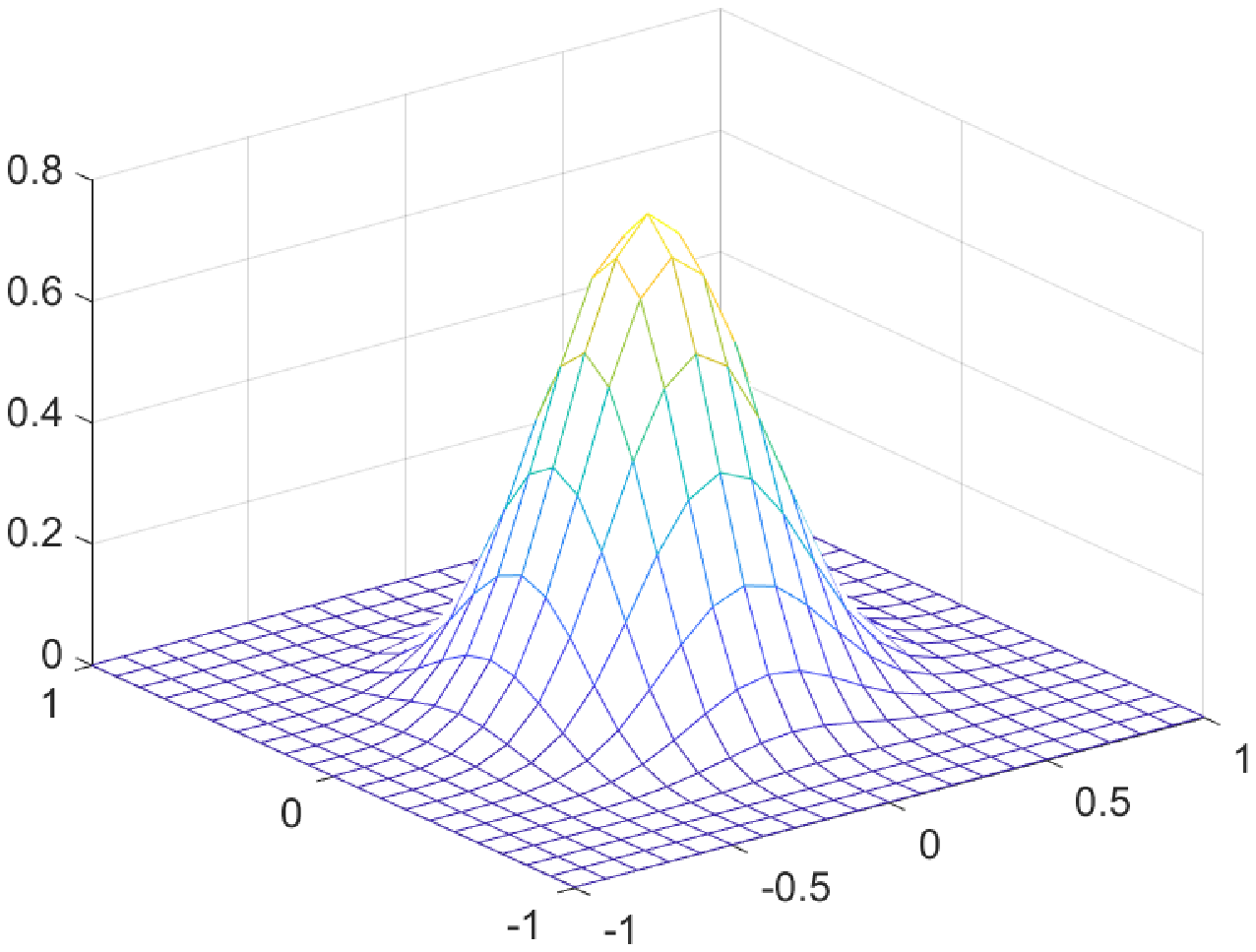}}\\
			\multicolumn{2}{c}{(g)}&	\multicolumn{2}{c}{(h)}\\
			\multicolumn{2}{c}{	\includegraphics[width=0.37\textwidth]{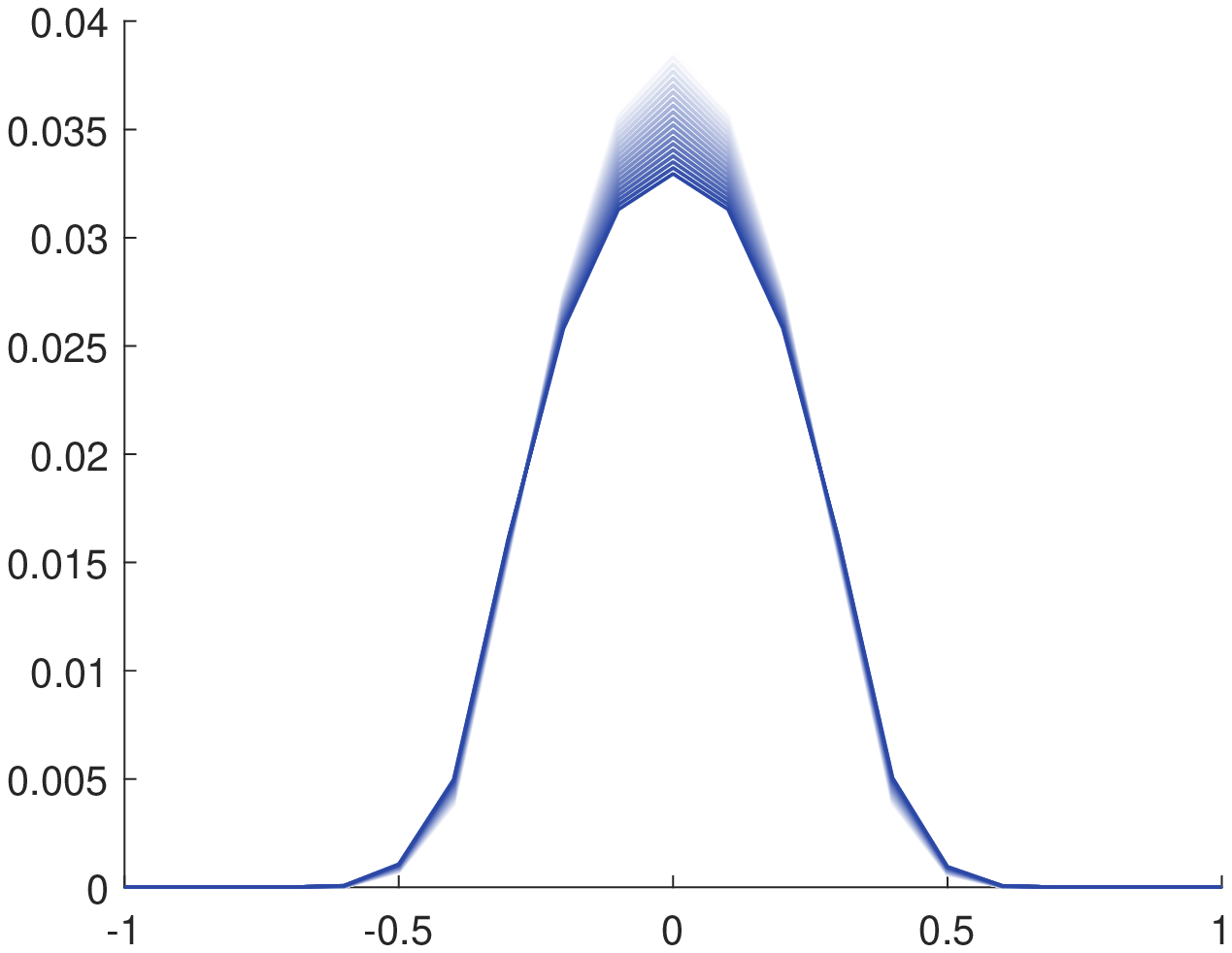}}&	\multicolumn{2}{c}{\includegraphics[width=0.37\textwidth]{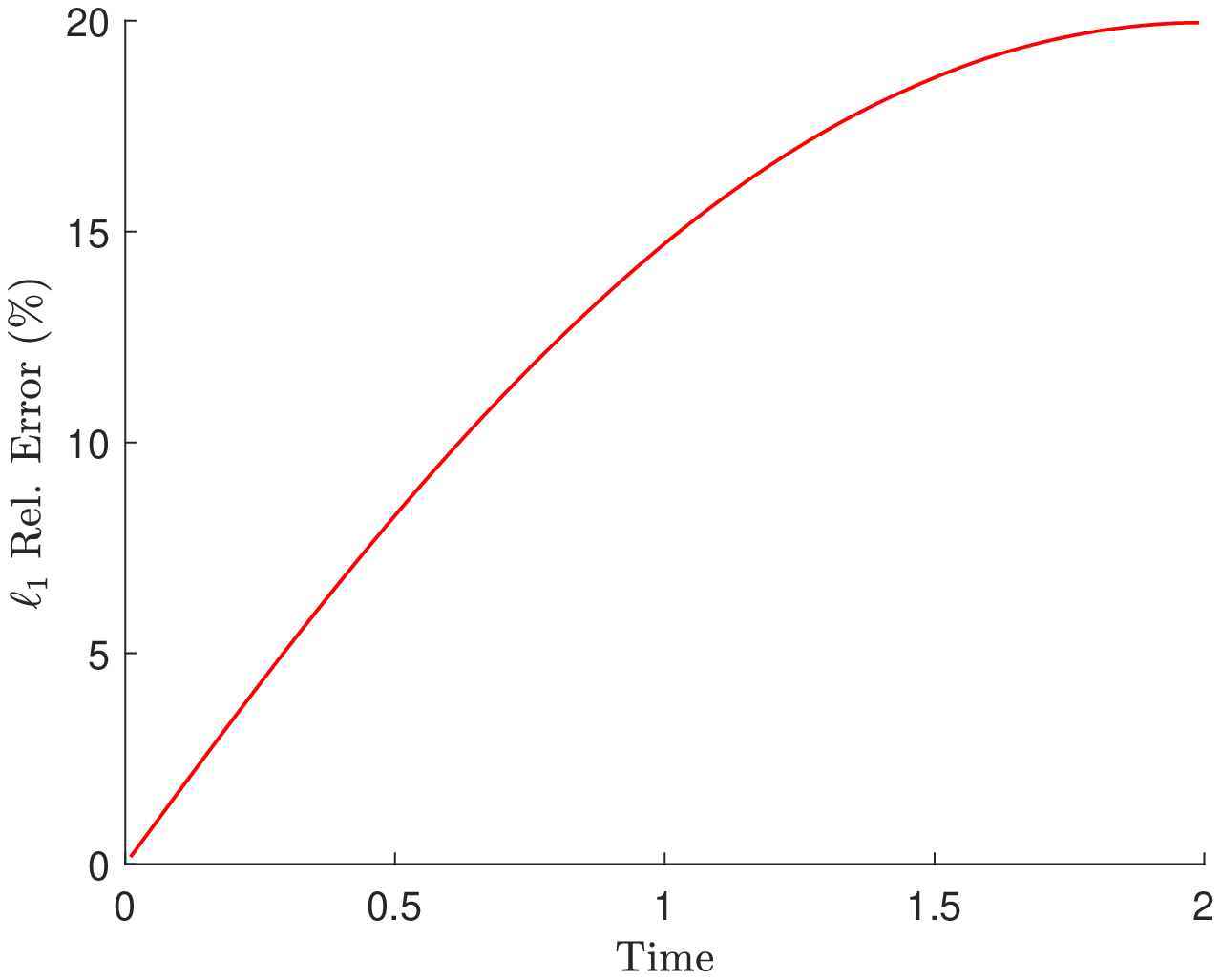}}
		\end{tabular}
	\end{center}
	\caption{Reynold's boids model (repulsion). (a)-(d): Simulated agents' locations (black dots) at (a) $t=0$, (b) $t=1$, (c) $t=1.5$, and (d) $t=2$. (e) Cross sections of the estimated density function along $x_1=0$. (f) The identified potential $\widehat{\phi}$. (g) Cross sections of the solution to (\ref{eq.aggregation}) with the identified potential $\widehat{\phi}$ along $x_1=0$. (h) The error $\widetilde{e}(t)$ as a function of $t$. The regularization parameters are set as $\alpha=1\times10^{-7}, \beta=1\times10^{-9},\gamma=0$. }\label{fig.RINO.2d.agent.repul}
\end{figure}

\begin{figure}[t]
	\begin{center}
		\begin{tabular}{c@{\hspace{2pt}}c@{\hspace{2pt}}c@{\hspace{2pt}}c}
			(a)&(b)&(c)&(d)\\
			\includegraphics[trim={2cm 0 3.2cm 0},clip,width=0.24\textwidth]{Figures/Reynold501.eps}&
			\includegraphics[trim={2cm 0 3.2cm 0},clip,width=0.24\textwidth]{Figures/Reynold600.eps}&
			\includegraphics[trim={2cm 0 3.2cm 0},clip,width=0.24\textwidth]{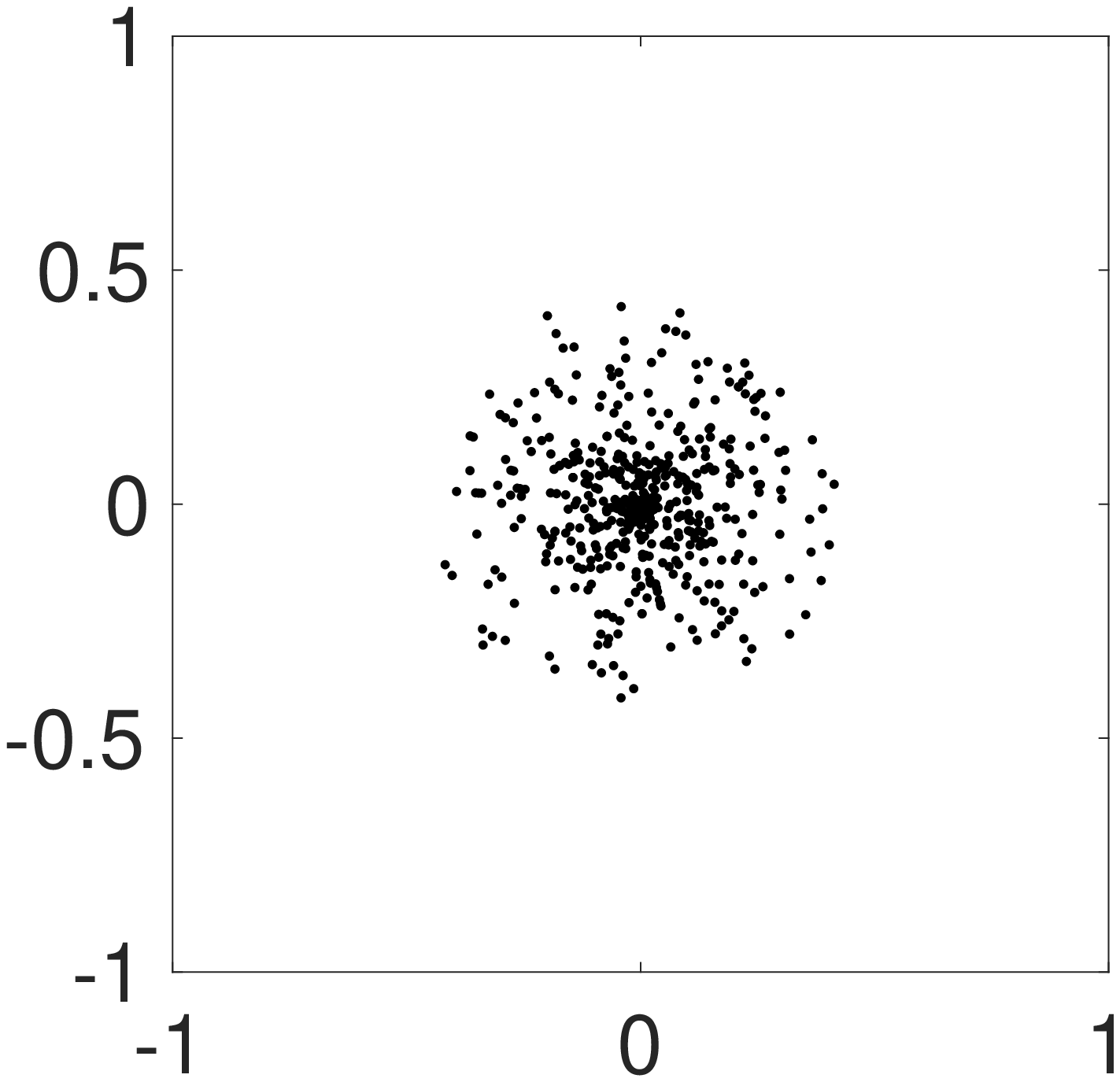}&
			\includegraphics[trim={2cm 0 3.2cm 0},clip,width=0.24\textwidth]{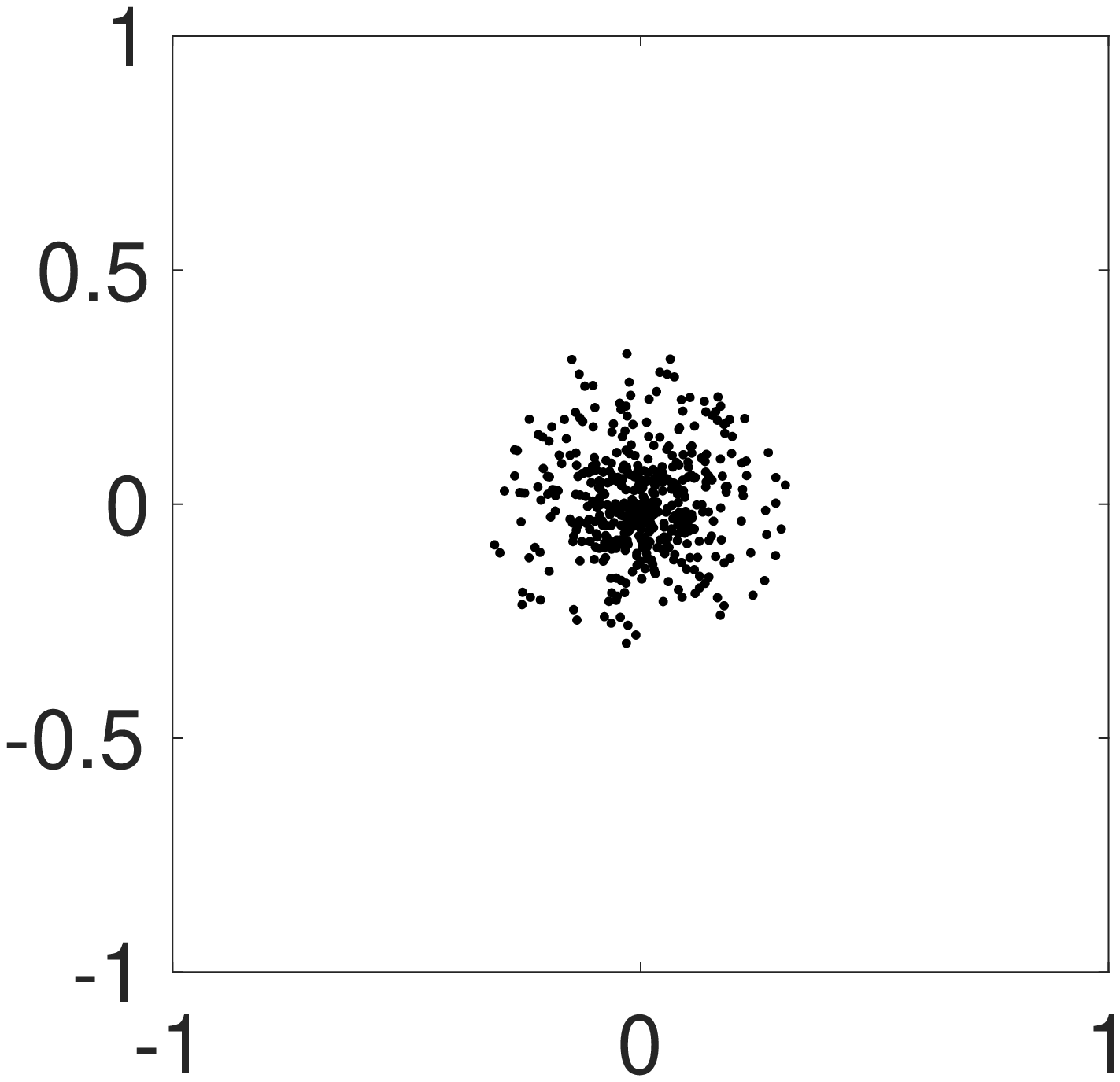}
			%\multicolumn{2}{c}{(e)}&	\multicolumn{2}{c}{(f)}\\
			%		\multicolumn{2}{c}{	\includegraphics[width=0.4\textwidth]{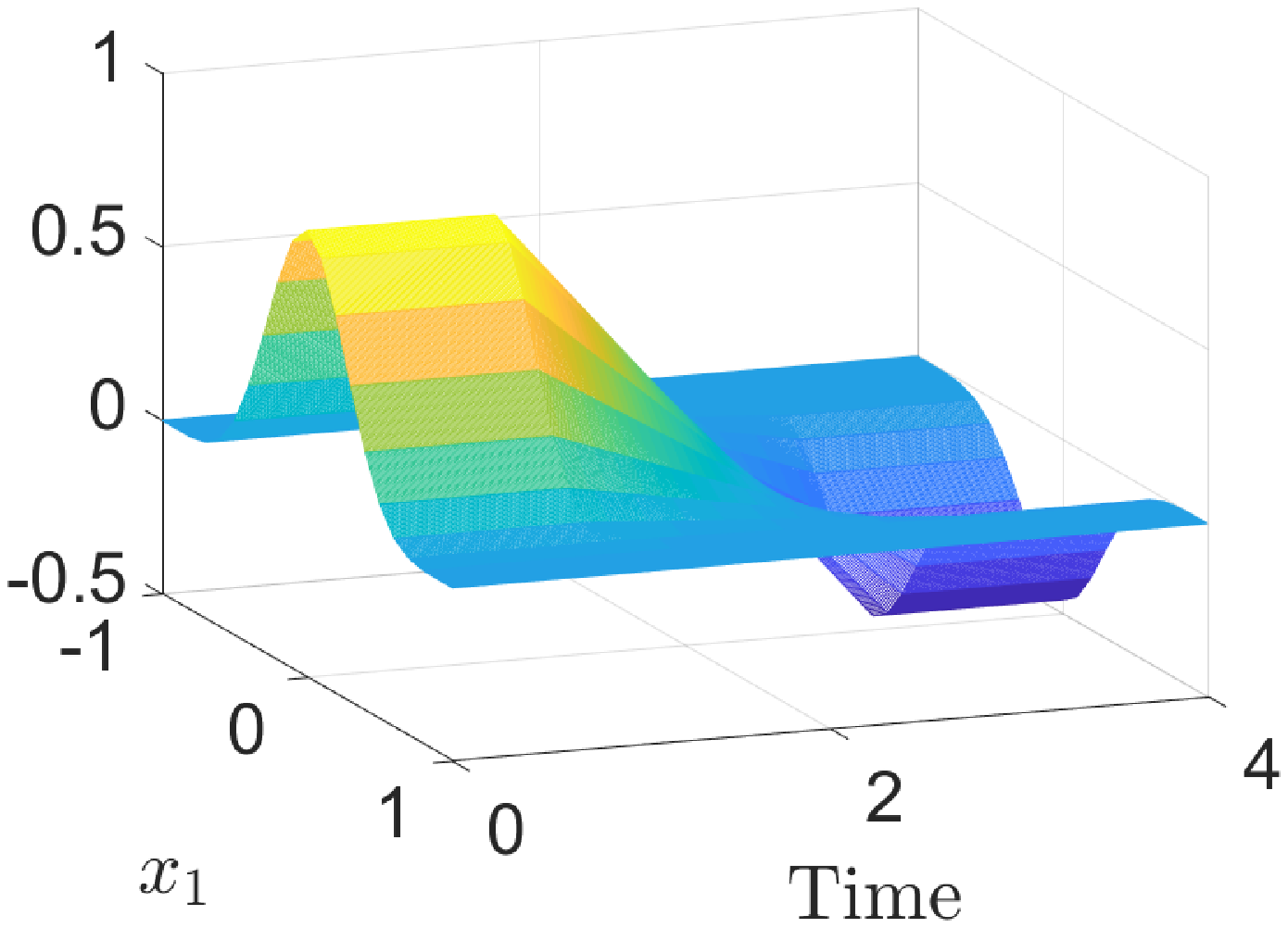}}&	\multicolumn{2}{c}{\includegraphics[width=0.4\textwidth]{}}\\
		\end{tabular}
		\begin{tabular}{ccc}
			(e)&(f)&(g)\\
			\includegraphics[width=0.32\textwidth]{}&	
			\includegraphics[width=0.32\textwidth]{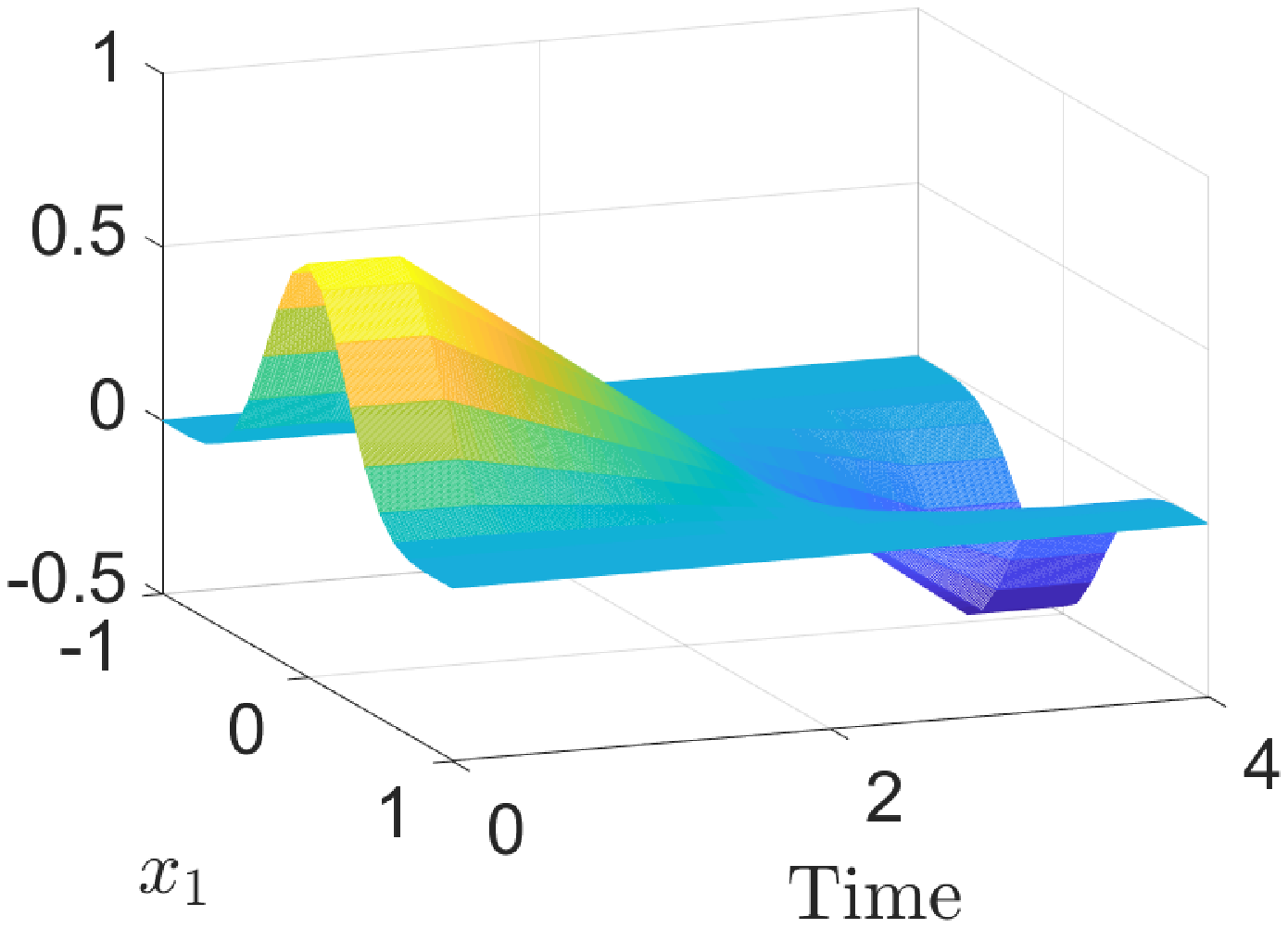}&	
			\includegraphics[width=0.32\textwidth]{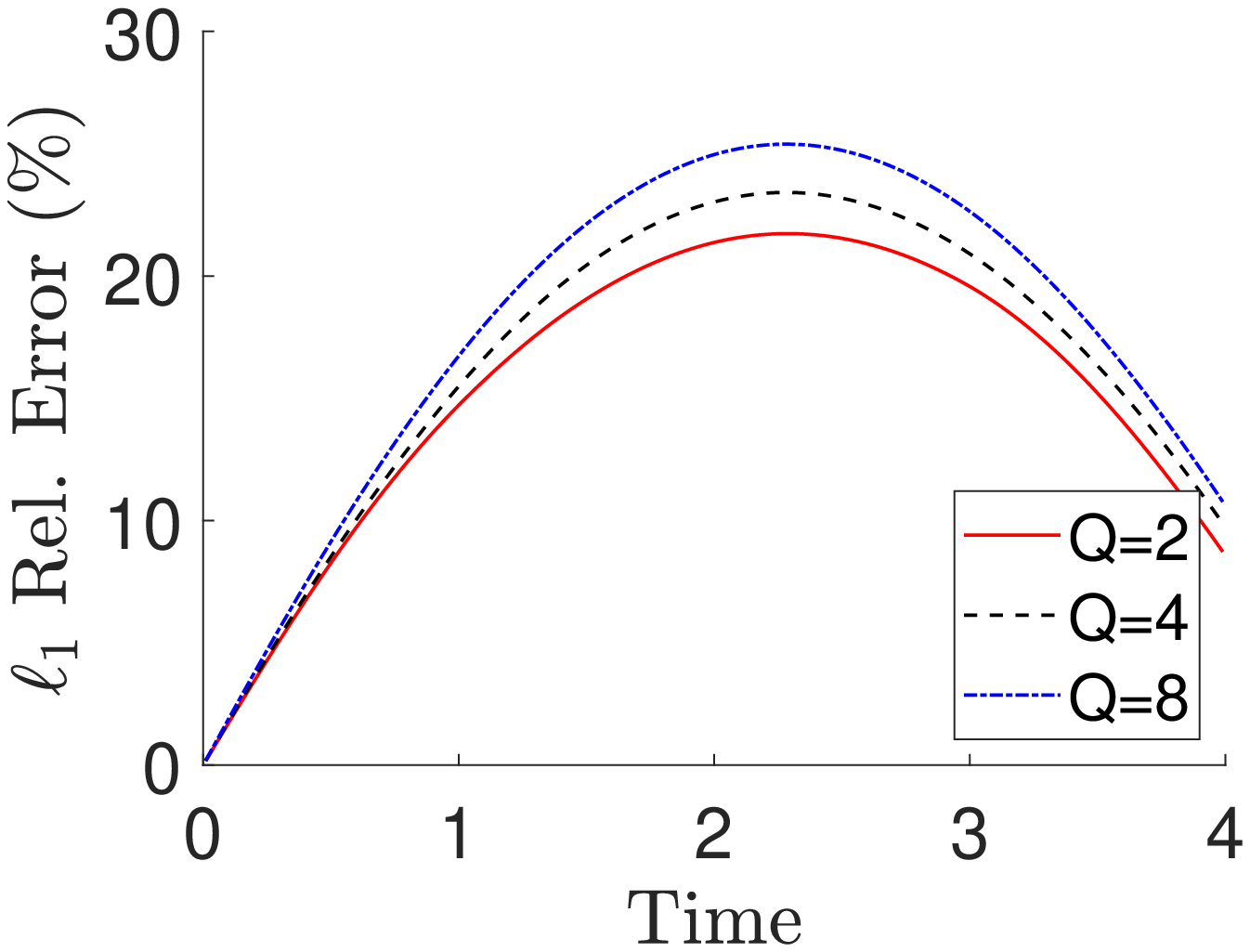}
		\end{tabular}
	\end{center}
	\caption{Reynold's boids model (two dynamics). (a)-(d): Simulated agents' locations (black dots) at (a) $t=0$, (b) $t=1$, (c) $t=2.5$, and (d) $t=4$. (e)-(f): Cross sections of the identified potentials $\widehat{\phi}$  along $x_2=0$ with  $Q=2$ (e) and  $Q=4$ (f) respectively. (g) The error $\widetilde{e}(t)$as functions of $t$  with $Q=2,4,8$ . The parameters are set as $\alpha=1\times10^{-7}, \beta=1\times10^{-9},\gamma=0, \rho=0$. }\label{fig.RINO.2d.agent2}
\end{figure}

\section{Conclusion and future work}\label{sec.conclusion}
This paper proposes a numerical method to identify potentials in aggregation equations from a noisy data set.
%While the convolution can be expressed linearly in numerical discretization,  the challenges come from the high order derivatives and noise present in the given data.
We propose to minimize a functional regularized by the total variation and the squared Laplacian of the potential. A splitting Bregman method is then used to efficiently find the proposed functional minimizer.   To improve the robustness of the proposed method, we designed an adaptive support scheme and a technique of imposing the symmetry constraint for symmetric potentials. We also propose a splitting-and-merge strategy to identify time-varying potentials, and a method to identify potentials from agent-based data. Systematic %one- and two-dimensional
experiments demonstrate that our method can identify a good approximation of the underlying potential from a noisy data set. Even when the agent data are not simulated from an aggregation PDE model, our proposed method can identify a potential that generates the dynamics as a good approximation of the given data.

This paper focuses on the recovery of spatially dependent potentials.  %In the case 
When the potential is time-dependent only, one can replace regularity penalties in space with those along the time direction. Identifying time and spatially-dependent potentials is more challenging, especially with noisy data. %, since the potential has an extra dimension. 
The method proposed in Section \ref{sec_time} is a simple extension of our method towards resolving time and spatially dependent potentials. We leave the design of a more robust method as our future work. %for the identification of time and spatially dependent potentials as our future work.}

The problem studied in this paper assumes that the discretized PDE value is known on every grid point (spatial and temporal domain). %In some situations, one may only collect the data at a few non-consecutive time frames or even only at the final time frame. The latter setting makes the problem more challenging. 
Suppose the data is only given in a few non-consecutive time frames or even only at the final time frame. In that case, the problem becomes more challenging since one cannot compute the temporal partial derivative of the solution easily.
%Under such settings, one cannot compute the temporal partial derivative of the solution easily and therefore the fidelity term in (\ref{eq.min.reg}) cannot be used. 
One possible direction is to formulate it as a PDE constraint optimization problem and solve it by the adjoint state method. We leave it as our future work.

%From agent-based data without an underlying potential, the proposed method can identify a potential with which the solution to the aggregation equation approximates the dynamic of the given data.
%Besides, the proposed method can identify potential from data generated from agent-based interaction data.
\section*{Acknowledgment}
The authors would like to sincerely thank Prof. Yao Yao in the School of Mathematics at Georgia Institute of Technology for invaluable discussions on aggregation models. 
Sung-Ha Kang is supported in part by Simons Foundation grant 282311 and 584960. Wenjing Liao is supported in part by NSF grant NSF-DMS 1818751 and NSF-DMS 2012652. Hao Liu is supported in part by HKBU 162784 and 179356. Yingjie Liu is supported in part by NSF grants DMS-1522585 and DMS-CDS\&E-MSS-1622453.

\appendix
\section*{Appendix}
\section{The space $P$ is complete and reflexive}
\begin{Prop}\label{prop_Banach}
	The space $P$ defined in (\ref{eq_space})
	%		\begin{align*}
	%			P = \left\{\phi \in H^1_0(\Omega)\;:\;\nabla\phi\in \bigcap_{\varepsilon>0}H^1(\Omega\setminus \bar{B}_{\varepsilon};\mathbb{R}^d)\;,\;\;\lim_{\varepsilon\to 0^+}\left\{\int_{\Omega\setminus \bar{B}_\varepsilon}(\nabla\cdot\nabla\phi)^2\,dx\right\}<\infty\right\}
	%		\end{align*}
	%		\textcolor{red}{Has this space been defined? If yes, there is no need to write it down again. We need to keep the same notation for the same definition.}
	equipped with the norm $\|\phi\|_P = \|\phi\|_{H_0^1(\Omega)}+\lim_{\varepsilon\to 0^+}\|\nabla\cdot\nabla\phi\|_{L^2(\Omega\setminus\bar{B}_\varepsilon)}$ is a reflexive Banach space.
\end{Prop}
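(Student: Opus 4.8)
The plan is to exhibit $(P,\|\cdot\|_P)$ as a subspace isomorphic to a closed subspace of a Hilbert space, so that completeness and reflexivity both follow from standard facts. Throughout write $\Omega_\varepsilon:=\Omega\setminus\bar{B}(\mathbf{0},\varepsilon)$. First I would check that the norm is well defined. For $\phi\in P$ the quantity $\|\nabla^2\phi\|_{L^2(\Omega_\varepsilon)}$ is nondecreasing as $\varepsilon\downarrow 0$, since $\Omega_\varepsilon$ increases; together with the finiteness built into the definition of $P$, the monotone convergence theorem shows that the limit exists, equals the supremum, and coincides with $\|g\|_{L^2(\Omega)}$ for a single function $g\in L^2(\Omega)$ obtained by patching the weak Laplacians $\nabla^2\phi|_{\Omega_\varepsilon}$ (which are consistent under restriction, so they define one measurable function on $\Omega\setminus\{\mathbf{0}\}$). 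Writing $\Lambda\phi:=g$, the map $\Lambda:P\to L^2(\Omega)$ is linear because the weak Laplacian is, and $\|\phi\|_P=\|\phi\|_{H_0^1(\Omega)}+\|\Lambda\phi\|_{L^2(\Omega)}$; the norm axioms are then immediate, with positive-definiteness coming from the $H_0^1$ term.

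Next I would define the linear map
\begin{equation*}
T:P\to H_0^1(\Omega)\times L^2(\Omega),\qquad T\phi=(\phi,\Lambda\phi).
\end{equation*}
Equipping the product with the equivalent Hilbertian norm $\big(\|\cdot\|_{H_0^1}^2+\|\cdot\|_{L^2}^2\big)^{1/2}$ makes it a Hilbert space, and $T$ is a linear injection onto its range, bounded with bounded inverse, because $\|\phi\|_P$ is equivalent to $\|T\phi\|$ ($\ell^1$ versus $\ell^2$ on two components). Since a closed subspace of a Hilbert space is itself a Hilbert space, hence complete and reflexive, and since these properties transfer through the isomorphism $T$ and are insensitive to passing to the equivalent norm $\|\cdot\|_P$, it suffices to prove that $T(P)$ is closed in $H_0^1(\Omega)\times L^2(\Omega)$.

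To show closedness, take $\phi_n\in P$ with $T\phi_n=(\phi_n,\Lambda\phi_n)\to(\phi,g)$ in $H_0^1(\Omega)\times L^2(\Omega)$. I first identify $g$ as the Laplacian of $\phi$ away from the origin. Fix $\varepsilon>0$ and $\eta\in C_c^\infty(\Omega_\varepsilon)$; since $\nabla^2\phi_n=\Lambda\phi_n$ on $\Omega_\varepsilon$, integration by parts gives $\int_{\Omega_\varepsilon}(\Lambda\phi_n)\,\eta\,d\bx=\int_{\Omega_\varepsilon}\phi_n\,\nabla^2\eta\,d\bx$. Letting $n\to\infty$ and using $\Lambda\phi_n\to g$ and $\phi_n\to\phi$ in $L^2$ yields $\int_{\Omega_\varepsilon}g\,\eta\,d\bx=\int_{\Omega_\varepsilon}\phi\,\nabla^2\eta\,d\bx$, i.e.\ $\nabla^2\phi=g$ weakly on $\Omega_\varepsilon$, whence $\|\nabla^2\phi\|_{L^2(\Omega_\varepsilon)}=\|g\|_{L^2(\Omega_\varepsilon)}\le\|g\|_{L^2(\Omega)}<\infty$, so the defining limit for $\phi$ is finite.

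The remaining, and main, obstacle is to upgrade this control of the Laplacian to the membership $\nabla\phi\in\bigotimes^d H^1(\Omega_\varepsilon)$ demanded by the definition of $P$, i.e.\ to show the full Hessian of $\phi$ lies in $L^2(\Omega_\varepsilon)$, not merely that its trace does. I would handle this through elliptic regularity for $\nabla^2\phi=g\in L^2$: interior regularity gives $\phi\in H^2_{\mathrm{loc}}(\Omega\setminus\{\mathbf{0}\})$, already covering a neighborhood of the inner sphere $\partial B(\mathbf{0},\varepsilon)$, while near the outer boundary I would invoke $H^2$-regularity for the Dirichlet problem (using $\phi\in H_0^1(\Omega)$), giving $\phi\in H^2(\Omega_\varepsilon)$ and hence $\nabla\phi\in\bigotimes^d H^1(\Omega_\varepsilon)$ for every $\varepsilon>0$. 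This places $\phi\in P$ with $\Lambda\phi=g$, so $(\phi,g)=T\phi\in T(P)$ and $T(P)$ is closed, completing the proof. The delicate point is the up-to-boundary regularity on a merely Lipschitz $\partial\Omega$; one either uses only the regularity genuinely available there or strengthens the boundary hypothesis, but the structural reduction via $T$ is unaffected either way.
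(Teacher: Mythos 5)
Your route is genuinely different from the paper's. The paper argues directly on Cauchy sequences: it extracts the $H_0^1(\Omega)$ limit, shows the Laplacians converge in $L^2(\Omega\setminus\bar{B}_{\varepsilon_N})$ for each $N$, and glues the annulus limits with a partition of unity; reflexivity is then dispatched in one line by calling $P$ ``a closed subspace'' of $H_0^1(\Omega)$. Your embedding $T\phi=(\phi,\Lambda\phi)$ into $H_0^1(\Omega)\times L^2(\Omega)$ buys two things. First, it packages completeness and reflexivity into the single claim that $T(P)$ is closed in a Hilbert space, so both properties come at once. Second, it actually repairs the paper's reflexivity argument, which is flawed as written: $P$ carries a strictly stronger norm than $H^1$ and is not a closed subspace of $H_0^1(\Omega)$ in the norm of $H_0^1(\Omega)$; the correct statement is precisely yours, that $P$ is isomorphic to a closed subspace of a reflexive product, and reflexivity passes through the isomorphism. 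You also make explicit two steps the paper waves away: the identification $g=\nabla\cdot\nabla\phi$ by testing against $C_c^\infty(\Omega\setminus\bar{B}_\varepsilon)$ functions (the paper's ``it is easy to see''), and the monotone-convergence bookkeeping showing the limit defining $\|\cdot\|_P$ exists and equals $\|g\|_{L^2(\Omega)}$ for a single patched $g$ (the partition of unity in the paper is doing the same patching, less economically, since the annulus limits are consistent under restriction).

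The one genuine gap is the one you flagged yourself, and it deserves to be stated sharply. Membership in $P$ requires $\nabla\phi\in\bigotimes^d H^1(\Omega\setminus\bar{B}_\varepsilon)$, i.e.\ the \emph{full} Hessian in $L^2$ up to the outer boundary, while the norm controls only the Laplacian; so placing the limit in $P$ needs the implication that $\phi\in H_0^1(\Omega)$ with $\Delta\phi\in L^2$ near $\partial\Omega$ is $H^2$ near $\partial\Omega$. Your interior-regularity step (covering the inner sphere $\partial B(\mathbf{0},\varepsilon)$) is correct, but the up-to-boundary step is false for general Lipschitz domains: a non-convex polygon already admits $\phi\in H_0^1$ with $\Delta\phi\in L^2$ and $\phi\notin H^2$ near a reentrant corner, so naive Dirichlet $H^2$ regularity does not close the argument under the stated hypotheses; rescuing it even on polygons requires Grisvard-type closed-range and finite-codimension results for $\Delta$ on $H^2\cap H_0^1$, or a strengthened boundary assumption (convex or $C^{1,1}$). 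You should be aware, though, that the paper's own proof never verifies this membership for the limit at all---it constructs the limiting Laplacian $\psi$ and stops, and it also omits convergence in the $P$-norm itself (the uniformity in $\varepsilon$, which for Cauchy sequences is routine). So your proof is no less complete than the paper's at the only hard point, is more careful everywhere else, and, unlike the paper, is honest about where the difficulty sits.
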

\begin{proof}Take a Cauchy sequence $\{\phi_i\}\subseteq P$ and a decreasing sequence $\{\varepsilon_N\}_{N=1}^\infty$ converging to $0$. Since $H_0^1(\Omega)$ is complete, there exists $\phi$ such that $\phi_i\to\phi$ in $H^1_0(\Omega)$. Fix an arbitrary integer $N>0$, then for any $n\in\mathbb{N}$ with $n>N$, there exist integers $i_n,j_n$  and  a real number $\varepsilon_n<\varepsilon_N $ such that
	\begin{align*}
		\|\nabla\cdot\nabla\phi_{i_n}-\nabla\cdot\nabla\phi_{j_n}\|_{L^2(\Omega\setminus\bar{B}_{\varepsilon_N})}<\|\nabla\cdot\nabla\phi_{i_n}-\nabla\cdot\nabla\phi_{j_n}\|_{L^2(\Omega\setminus\bar{B}_{\varepsilon_n})}<2^{-n}\;.
	\end{align*}
	Since $L^2(\Omega\setminus\bar{B}_{\varepsilon_N})$ is complete, $\nabla\cdot\nabla\phi_i$ converges to some $\psi_N$ in $L^2(\Omega\setminus\bar{B}_{\varepsilon_N})$. Define $\psi = \sum_{N}\zeta_N\psi_N$ where $\{\zeta_N\}_{N=1}^\infty$ is a partition of unity of $\Omega\setminus\{0\}$ subordinate to the open cover $\{\Omega\setminus\bar{B}_{\varepsilon_N}\}$, and it is easy to see  that $\psi=\nabla\cdot\nabla\phi$ on any $\Omega\setminus\bar{B}_{\varepsilon_N}$, $N\geq 1$; hence $P$ is complete. Moreover, since $H_0^1(\Omega)$ is reflexive, as a closed subspace, $P$ is also reflexive.
\end{proof}

\section{Proof of Theorem~\ref{th_ex}}
Denote the energy in~\eqref{eq.min.reg} as $\mathcal{E}(\phi)$. Take a minimizing sequence $(\phi_m)\in P$ such that $\lim\limits_{m\rightarrow \infty} \mathcal{E}(\phi_m)=c:=\inf_{\phi\in P}\mathcal{E}(\phi)$. By~\cite{scherzer2009variational} It is easy to check that  $\mathcal{E}(\phi)$ is sequentially weakly lower semi-continuous. Hence, by the Eberlein-\v{S}mulian theorem~\cite{dunford1988linear}~(p.430), we can assume that $\phi_m$ weakly converges to $\phi^*$ for some $\phi_0\in P$. Since the lower level-set of $\mathcal{E}$ is weakly closed, $\mathcal{E}(\phi^*)<\infty$, thus  the minimizer of $\mathcal{E}$ exists in $P$. The uniqueness follows from the fact that $\mathcal{E}(\phi)$ is strictly convex in $\phi$.

\section{Derivation of the the boundary terms in (\ref{eq_boundaryterm})}\label{app_der}
To derive the first variation of~\eqref{eq.split.1} with symmetry, we take a  smooth  test function $\eta:[-L,L]\to\mathbb{R}$ such that $\eta(x)=\eta(-x)$ for $x\in [-L,L]$, which satisfies $\eta(L)=\eta_x(L)=0$. We next compute the perturbed energy  along $\eta$
\begin{align*}
	E(h)&=\frac{1}{2}\int_0^T\int_\Omega|u_t -L_u(\phi+h\eta)|^2\,d\bx\,dt+\beta\int_0^L ((\phi+h\eta)_{xx})^2\,d\bx+\lambda\int_0^L(\bb^k+(\phi+h\eta)_x-\psi^k)^2\,d\bx\\
	&=\frac{1}{2}\int_0^T\int_\Omega|u_t -L_u(\phi+h\eta)|^2\,d\bx\,dt+\beta\int_0^L \phi^2_{xx}\,d\bx+2\beta\int_0^L h\phi_{xx}\eta_{xx} \,d\bx+\beta\int_0^L h^2\eta^2_{xx}\\
	&+\lambda\int_0^L(\bb^k+\phi_x-\psi^k)^2\,d\bx+2\lambda\int_0^L(\bb^k+\phi_x-\psi^k)h\eta_x\,d\bx+\lambda\int_0^L h^2\eta_x^2\,d\bx
\end{align*}
Its first variation is
\begin{align*}
	\frac{d\,E(0)}{dh}&=\frac{1}{2}\frac{d}{dh}\Bigr|_{h=0}\int_0^T\int_\Omega|u_t -L_u(\phi+h\eta)|^2\,d\bx\,dt+2\beta\int_0^L \phi_{xx}\eta_{xx} \,d\bx+2\lambda\int_0^L(\bb^k+\phi_x-\psi^k)\eta_x\,d\bx\;.
\end{align*}
Since the first integral does not give the boundary term. We only focus on the last two integrals and apply integration by parts
\begin{align*}
	&2\beta\int_0^L \phi_{xx}\eta_{xx} \,d\bx+2\lambda\int_0^L(\bb^k+\phi_x-\psi^k)\eta_x\,d\bx\\
	&=2\beta\phi_{xx}\eta_x\Bigr|_{0}^L-2\beta\int_0^L\phi_{xxx}\eta_{x}\,d\bx+2\lambda (\bb^k+\phi_x-\psi^k)\eta\Bigr|_0^L-2\lambda\int_0^L(\bb_x^k+\phi_x-\psi_x^k)\eta\,d\bx\\
	&=2\beta\phi_{xx}\eta_x\Bigr|_{0}^L-2\beta\phi_{xxx}\eta\Bigr|_0^L+2\beta\int_0^L\phi_{xxxx}\eta\,d\bx+2\lambda (\bb^k+\phi_x-\psi^k)\eta\Bigr|_0^L-2\lambda\int_0^L(\bb_x^k+\phi_x-\psi_x^k)\eta\,d\bx.\\
\end{align*}
Hence we obtain the boundary terms as in~\eqref{eq_boundaryterm}.

\bibliographystyle{abbrv}
\bibliography{PDElit}
	
\end{document}